\documentclass[preprint,a4paper,10pt]{article}
\title{On a definition of multi-Koszul algebras}
\author{Estanislao Herscovich
\footnote{Institut Joseph Fourier, Universit\'e Grenoble I, France.}, 
Andrea Rey \footnote{Departamento de Matem\'atica y Ciencias, Universidad de San Andr\'es, Argentina.}}
\date{}

\usepackage{amsmath,amsthm,amsfonts,amssymb,stmaryrd}
\usepackage{latexsym}
\usepackage{fullpage}
\usepackage{a4,color,palatino,fancyhdr}
\usepackage{graphicx}%poner [dvips] para gráficos
\usepackage{float}  %Este packete permite controlar la flotacion de figuras al poner [H] o [h] como opcion de \begin{figure}
\usepackage[small, bf, margin=90pt, tableposition=bottom]{caption}
\RequirePackage{amssymb}
\RequirePackage[T1]{fontenc}
\usepackage[breaklinks, naturalnames]{hyperref}
\usepackage{amsrefs}

\input{xy}
\xyoption{all}
\xyoption{poly}
\usepackage[all]{xy}

%%%%%%%%%%%%%%%%%%%%%%%%%%%%%%%%%Theorems, lemmas, corollaries, etc%%%%%%%%%%%%%%%%%%%%%%%%%%%%%%%%%%%%%%%%%%%%%%%%%%%%%%%%%%%%%%%%%%%%%%%%%%%%%%%%%%%%%%%%%
\newtheorem{theorem}{Theorem}[section]
\newtheorem{proposition}[theorem]{Proposition}
\newtheorem{definition}[theorem]{Definition}
\newtheorem{lemma}[theorem]{Lemma}
\newtheorem{corollary}[theorem]{Corollary}
\newtheorem{remark}[theorem]{Remark}
\newtheorem{example}[theorem]{Example}

\numberwithin{equation}{section}

%%%%%%%%%%%%%%%%%%%%%%%%%%%%%%Margins settings%%%%%%%%%%%%%%%%%%%%%%%%%%%%%%%%%%%%%%%%%%%%%%%%%%%%%%%%%%%%%%%%%%

%%%%%%%%%%%%%%%%%%%%%%%%%%%%%%%%%%%%%%%%%%%%%%%%%%%%%%%%%%%%%%%%%%%%%%%%%%%%%%%%%%%%%%%%%%%%%%%%%%%%%%%%%%%%%%%%%%%%%%%%%%%%%%%%%%%%%%%%%%%%%%%%%%%%%

\def\cl#1{{\langle #1 \rangle}}

\def\place{{-}}
\def\pd{\mathop{\rm pdim}\nolimits}
\def\Ker{\mathop{\rm Ker}\nolimits}
\def\im{\mathop{\rm Im}\nolimits}

\def\Tor{\mathop{\rm Tor}\nolimits}
\def\Ext{\mathop{\rm Ext}\nolimits}
\def\ext{\mathop{\rm ext}\nolimits}
\def\Hom{{\mathrm {Hom}}}

\def\grMod{{\mathrm {grMod}}}

\newcommand\ZZ{{\mathbb{Z}}}
\newcommand\NN{{\mathbb{N}}}

\def\place{{-}}

%%%%%%%%%%%%%%%%%%%%%%%%%%%%%%%%%%%%%%%%%%%%%%%%%%%%%%%%%%%%%%%%%%%%%%%%%%%%%%%%%%%%%%%%%%%%%%%%%%%%%%%%%%%%%%%%%%%%%%%%%%%%%%%%%%%%%%%%%%%%%%%%%%%%%%%%%%%%%

\begin{document}

\maketitle
                                                     
\hrulefill
%%%%%%%%%%%%%%%%%%%%%%%%%%%%%%%%%%%%%%%%%%%%%%%%%%%%%%%%%%%%%%%%%%%%%%%%%%%%%%%%%%%%%%%%%%%%%%%%%%%%%%%%%%%%%%%%%%%%%%%%%%%%%%%%%%

\begin{abstract}
In this article we introduce the notion of \emph{multi-Koszul algebra} for the case of a nonnegatively graded connected algebra with a finite number of generators of degree $1$ and with a finite number of relations, as a generalization of the notion of (generalized) Koszul algebras defined by R. Berger for homogeneous algebras, 
which were in turn an extension of Koszul algebras introduced by S. Priddy. 
Our definition is in some sense as closest as possible to the one given in the homogeneous case. 
Indeed, we give an equivalent description of the new definition in terms of the \textrm{Tor} (or \textrm{Ext}) groups, similar to the existing one for homogeneous algebras, 
and also a complete characterization of the multi-Koszul property, which derives from the study of some associated homogeneous algebras, 
providing a very strong link between the new definition and the generalized Koszul property for the associated homogeneous algebras mentioned before. 
We further obtain an explicit description of the Yoneda algebra of a multi-Koszul algebra. 
As a consequence, we get that the Yoneda algebra of a multi-Koszul algebra is generated in degrees $1$ and $2$, so a $\mathcal{K}_{2}$ algebra in the sense of T. Cassidy and B. Shelton. 
We also exhibit several examples and we provide a minimal graded projective resolution of the algebra $A$ considered as an $A$-bimodule, 
which may be used to compute the Hochschild (co)homology groups. 
Finally, we find necessary and sufficient conditions on some (fixed) sequences of vector subspaces of the tensor powers of the base space $V$ to obtain in this case the multi-Koszul property in the case we have relations in only two degrees. 
\end{abstract}

\textbf{Mathematics subject classification 2010:} 16E05, 16E30, 16E40, 16S37, 16W50.

\textbf{Keywords:} Koszul algebra, Yoneda algebra, homological algebra.

\hrulefill
%%%%%%%%%%%%%%%%%%%%%%%%%%%%%%%%%%%%%%%%%%%%%%%%%%%%%%%%%%%%%%%%%%%%%%%%%%%%%%%%%%%%%%%%%%%%%%%%%%%%%%%%%%%%%%%%%%%%%%%%%%%%%%%%%%%%%%%%%%%%%%%%%%%%%%%%

\tableofcontents

\section{\texorpdfstring{Introduction}{Introduction}}
\label{sec:int}

Koszul algebras were introduced by S. Priddy in \cite{P}, motivated by the article \cite{K} published by J.-L. Koszul in the 50s. 
They have been extensively studied in the last years, in particular due to their applications in representation theory (\textit{cf.} \cite{BGS1}, \cite{BGS2}), algebraic geometry (\textit{cf.} \cite{F}), quantum groups (\textit{cf.} \cite{M}), and combinatorics (\textit{cf.} \cite{HL}), to mention a few. 
These algebras are necessarily quadratic, \textit{i.e.} they are of the form $T(V)/\cl{R}$, with $R \subseteq V^{\otimes 2}$. 
R. Berger generalized in \cite{B2} the notion of Koszul algebras (\textit{cf.} also \cite{GMMZ}) to the case of homogeneous algebras, 
\textit{i.e.} algebras given by $T(V)/\cl{R}$, with $R \subseteq V^{\otimes N}$, for $N \geq 2$. 
They were called \emph{generalized Koszul}, or \emph{$N$-Koszul} if the mention to the degree of the relations was to be indicated, and the case $N = 2$ of the definition introduced by Berger 
coincides with the one given by Priddy. 
The general definition shares a lot of good properties with the one given by Priddy, justifying the terminology (see for example \cites{B2, BG}). 
In particular, the Yoneda algebra of an $N$-Koszul algebra is finitely generated (in degrees $1$ and $2$), and its structure is easily computed from that of the original algebra. 
We would like to point out that the new class of algebras satisfying the Koszul property of Berger lacks however of other interesting properties, 
\textit{e.g.} they are not closed under taking duals, or under considering graded Ore extensions, the Yoneda algebra of an $N$-Koszul algebra is not formal for $N \geq 3$, etc. 

On the other hand, several examples of not necessarily homogeneous algebras which arise in the practice and which share some of the interesting properties of generalized Koszul algebras lead to the question if there is an analogous definition of \emph{Koszul-like} algebra for more general situations. 
In this article we propose such a definition for the case of a finitely generated nonnegatively graded connected algebra which is generated in degree $1$ and has a finite number of relations, \textit{i.e.} algebras of the form $T(V)/\cl{R}$, where $V$ is a finite dimensional vector space, which we consider to be in degree $1$, and $R \subseteq T(V)_{\geq 2}$ is a finite dimensional graded vector space. 
They will be called \emph{multi-Koszul}. 
Our main goal is to provide such a class of algebras, which are in some sense the closest possible to the generalized Koszul algebras, 
for which the Yoneda algebra is in fact finitely generated and its structure is directly deduced from that of the original algebra. 

The new definition may seem however to be too restrictive (\textit{e.g.} see Remark \ref{rem:malo}, and Corollary \ref{coro:monomial}), 
and despite the fact that it is probably not the most general possible and reasonable extension of the Koszul property for such algebras, 
all the nice properties satisfied by it (\textit{e.g.} Theorem \ref{thm:koszulbypeaces}, Propositions \ref{prop:libre}, \ref{prop:tresimp} and \ref{prop:yoneda}, Remarks \ref{rem:yoneda} and \ref{rem:yonedaainf}, 
and Corollary \ref{coro:yoneda}) 
make us believe that any sensible such general definition of \textit{Koszul-like} algebra in the general context of graded algebras, if it exists, 
should necessarily include our definition as a special case. 

This work is partially inspired on the Ph.D. thesis of the second author, but it considers a more general setting of graded algebras.

The contents of the article are as follows. 
We start by recalling in Section 2 several well-known definitions and results about the category of graded modules over a nonnegatively graded connected algebra.

Section 3 is devoted to the definition of multi-Koszul algebras and to prove some properties for this family of algebras. 
The first main result, Proposition \ref{prop:tresimp} (see also Proposition \ref{prop:tresimpext}) gives a (co)homological description of multi-Koszul algebras in term of their \textrm{Tor} 
(or \cal{E}xt) groups, which yields a left-right symmetry of the definition.
Then, Theorem \ref{thm:koszulbypeaces} relates the multi-Koszul property for an algebra $A$ to the (generalized) Koszul property of some homogeneous algebras associated to $A$. 
Moreover, we also study the associated Yoneda algebra and prove in Corollary \ref{coro:yoneda} that multi-Koszul algebras are $\mathcal{K}_2$ algebras, 
in the sense defined by B. Cassidy and T. Shelton in \cite{CS}. 
We further obtain a description of the $A_{\infty}$-algebra structure of the corresponding Yoneda algebra (see Remarks \ref{rem:yoneda} and \ref{rem:yonedaainf}). 

Following the ideas of Berger in \cite{B2}, in Section 4 we construct an $A$-bimodule resolution of a multi-Koszul algebra, 
which can be used in the computation of the Hochschild (co)homology groups of the algebra.

Finally, in Section 5 we concentrate on the special case of having relations in only two degrees. 
We find necessary and sufficient conditions on some (fixed) sequences of vector subspaces of the tensor powers of the base space $V$ to get the multi-Koszul property, 
following the lines of the analysis done in the case of (generalized) Koszul algebras by Berger (see \cite{B2}, Section 2, but \textit{cf.} also \cite{BF}). 
Some of these subspaces together with their conditions are the ones already found in the homogeneous case, but these are not equivalent to the multi-Koszul definition, 
and in fact new sequences of vector subspaces satisfying more complicated conditions are introduced in order to obtain the desired definition. 

We remark that the notion of multi-Koszul algebras for algebras having relations in two degrees is completely different from the notion of $(p,q)$-Koszul rings given in \cite{BBK}. 
On the other hand, it is easily seen that any multi-Koszul algebra with relations in degrees $2$ and $d > 2$ satisfies the $2$-$d$-Koszul property defined in \cite{GM}. 

Throughout this article $k$ will denote a field, and all vector spaces will be over $k$. 
Moreover, $V$ will always be a finite dimensional vector space, and $A$ a nonnegatively graded connected (associative) algebra over $k$ (with unit), 
to which we will usually just refer as an algebra, with irrelevant ideal $A_{>0} = \bigoplus_{n>0} A_{n}$. 
The vector space spanned by a set of elements $\{ v_{s} : s \in S \}$, for some index set $S$, will be denoted by $\mathrm{span}_{k}\cl{ v_{s} : s \in S }$ 
and the ideal $I$ generated by a set of elements $\{ \alpha_{s} : s \in S \}$ of an algebra $A$ will be denoted by $\cl{\alpha_{s} : s \in S}$. 
We will however also write the former vector space as $\cl{ v_{s} : s \in S }$ to simplify the notation, when we believe that it clearly denotes a vector space, 
but if necessary we shall stress that we mean the vector space spanned by those elements, and not the ideal generated by them, to avoid confusion. 
All unadorned tensor products $\otimes$ will be considered over $k$, unless otherwise stated. 
We shall typically denote the vector space $V^{\otimes n}$ by $V^{(n)}$, for $n \in \ZZ$, where we follow the convention $V^{(n)} = 0$, if $n < 0$, and $V^{(0)} = k$, 
and an elementary tensor $v_1\otimes \dots \otimes v_n\in V^{(n)}$ will be usually written by $v_1 \dots v_n$. 

%%%%%%%%%%%%%%%%%%%%%%%%%%%%%%%%%%%%%%%%%%%%%%%%%%%%%%%%%%%%%%%%%%%%%%%%%%%%%%%%%%%%%%%%%%%%%%%%%%%%%%%%%%%%%%%%%%%%%%%%%%%%%%%%%%%%%%%%%%%%%%%%%%%%%%%%%%%%

\section{\texorpdfstring{Preliminaries and basic properties}{Preliminaries and basic properties}}
\label{sec:preliminaries}

In this section we shall recall some basic facts about the category of (bounded below) $\ZZ$-graded modules over a nonnegatively graded connected algebra. 
We refer to \cite{C}, Exp. 15, or \cite{B4} for all the proofs of the mentioned results.  

Throughout this section, $A$ will always denote a nonnegatively graded connected algebra of the form $A=\bigoplus_{m \in \NN_{0}} A_{m}$, 
and we shall follow the typical convention $A_{m} = 0$, for $m < 0$.
A \emph{$\ZZ$-graded left (resp., right) $A$-module} $M=\bigoplus_{n \in \ZZ} M_n$ is a $\ZZ$-graded vector space together with an left (resp., right) action of $A$ on $M$ 
such that $A_m M_n \subseteq M_{m+n}$ (resp. $M_m A_{n} \subseteq M_{m+n}$), and we shall sometimes refer to them simply as \emph{left (resp., right) $A$-modules}. 
Moreover, since we will mostly deal with left modules, we will usually omit the adjective and call them just \emph{modules} (or \emph{graded modules}), if it is clear from the context, 
but we will restore it if it is necessary.
We will denote by $A$-$\grMod$ the abelian category of $\ZZ$-graded left $A$-modules, where the morphisms are the $A$-linear maps preserving the grading. 
The space of morphisms in this category between two graded $A$-modules $M$ and $M'$ will be denoted by $\hom_{A}(M,M')$. 
This category is provided with a \emph{shift functor} $(\place)[1]$ defined by $(M[1])_{n} = M_{n+1}$, where the underlying $A$-module structure of $M[1]$ is the same as the one of $M$, 
and the action of the morphisms is trivial. 
We shall also denote $(\place)[d]$ the $d$-th iteration of the shift functor. 
The $A$-module $M$ is said to be \emph{left bounded}, or also \emph{bounded below}, if there exists an integer $n_{0}$ such that $M_n=0$ for all $n<n_{0}$. 
Notice that the graded left $A$-modules which are left bounded form a full exact subcategory of $A$-$\grMod$.

If $M$ is a graded left $A$-module, we may consider the graded right $A$-module $M^{\#}$, called the \emph{graded dual}, 
which has $n$-th homogeneous component $(M_{-n})^{*}$, where $(\place)^{*}$ denotes the usual dual vector space operation, 
and if $\alpha \in A_{m}$ and $f \in (M^{\#})_{n}$, then $f \cdot \alpha \in (M^{\#})_{m+n} = (M_{-m-n})^{*}$ is defined by $(f \cdot \alpha)(x) = f(\alpha \cdot x)$, for all $x \in M_{-m-n}$. 
There exists an obvious definition if we start with a graded right $A$-module. 
We will also consider the analogous graded dual construction $(\place)^{\#}$ in the category of graded vector spaces. 
Given graded $A$-modules $N$ and $N'$, we recall the following notation:
\[     \mathcal{H}om_A (N,N') = \bigoplus\limits_{d\in \mathbb{Z}} \hom_A (N,N'[d]).     \]
We remark that, if $N$ is finitely generated, then $\mathcal{H}om_A (N,N') = \Hom_A (N,N')$, 
where the last morphism space is the usual one for $A$-modules by forgetting the gradings (see \cite{NV}, Cor. 2.4.4).  

We say that an object $M$ in $A$-$\grMod$ is \emph{$s$-concentrated} in degrees $l_1, \cdots ,$ $l_s$ if there exist integers $l_1< \cdots <l_s$ and vector subspaces of $M$, $M_{l_1}, \cdots , M_{l_s}$, such that $M=M_{l_1}\oplus \cdots \oplus M_{l_s}$.
An object $M$ in $A$-$\grMod$ is called \emph{$s$-pure} in degrees $l_1, \cdots ,$ $l_s$ if there exist integers $l_1< \cdots <l_s$ and graded vector subspaces $M_{l_1}, \cdots , M_{l_s}$ of $M$ where each $M_{l_i}$ is concentrated in degree $l_i$, such that $M=AM_{l_1}+ \cdots + AM_{l_s}$ and $M_{l_i}\cap (AM_{l_1}+\cdots +AM_{l_{i-1}})=0$ for all $i=2,\dots, s$. 
In this case, there exists an isomorphism of graded vector spaces $k \otimes_{A} M \simeq \bigoplus_{i=1}^sM_{l_i}$.
If $s=1$ we simply say that $M$ is a concentrated (respectively, pure) module (\textit{cf.} \cite{B2}).

In both cases, the integers $l_1, \cdots ,l_s$ such that $M_{l_1},\cdots ,M_{l_s}$ are nonzero are uniquely determined whenever $M$ is a nontrivial module. 
It is clear that every module which is $s$-concentrated in degrees $l_1,\cdots ,l_s$ is $s$-pure in degrees $l_1,\cdots ,l_s$, and that 
every module $s$-concentrated in degrees $l_1,\cdots ,l_s$ is isomorphic to a direct sum of shifts 
$k[-l_1]^{\dim M_{l_1}}\oplus \cdots \oplus k[-l_s]^{\dim M_{l_s}}$ as graded vector spaces. 

The following result is the graded version of the Nakayama Lemma.
%%%%%%%
\begin{lemma} 
\label{prop:trivialmod}
Let $M$ be a left bounded $\mathbb{Z}$-graded left $A$-module. 
If $k \otimes_A M=0$ then $M$ is also trivial.
\end{lemma}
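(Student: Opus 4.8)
The plan is to exploit the grading and the hypothesis that $M$ is left bounded to run an inductive (degree-by-degree) argument. Suppose, for contradiction, that $M$ is nontrivial. Since $M$ is left bounded, there exists an integer $n_0$ with $M_n = 0$ for $n < n_0$ and $M_{n_0} \neq 0$; here $n_0$ is the lowest degree in which $M$ has a nonzero component. I would like to show that this bottom component $M_{n_0}$ survives into $k \otimes_A M$, which would contradict the hypothesis.

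The key computation is that $k \otimes_A M = M / (A_{>0} \cdot M)$, using $k = A/A_{>0}$. So it suffices to check that $A_{>0} \cdot M$ does not meet $M_{n_0}$ nontrivially, i.e.\ $(A_{>0}\cdot M)_{n_0} = 0$. But an element of $(A_{>0}\cdot M)_{n_0}$ is a sum of terms $a\cdot x$ with $a \in A_m$, $m > 0$, and $x \in M_{n_0 - m}$; since $n_0 - m < n_0$, we have $M_{n_0 - m} = 0$, so every such term vanishes. Hence $(A_{>0}\cdot M)_{n_0} = 0$, and therefore the composite $M_{n_0} \hookrightarrow M \twoheadrightarrow M/(A_{>0}M) = k\otimes_A M$ is injective on $M_{n_0}$. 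Since $M_{n_0} \neq 0$, this forces $k \otimes_A M \neq 0$, contradicting the assumption. Therefore $M$ must be trivial.

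There is no real obstacle here; the statement is the graded Nakayama lemma and the only thing that makes it work (as opposed to the ungraded case, which requires finiteness or completeness hypotheses) is precisely the left-boundedness, which gives a well-defined bottom degree. The one point to be careful about is the identification $k \otimes_A M \simeq M/(A_{>0}M)$ as graded vector spaces and the fact that the grading is respected throughout, so that restricting attention to the homogeneous component of degree $n_0$ is legitimate; all of this is routine from the definitions recalled in this section.
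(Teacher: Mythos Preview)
Your argument is correct and is the standard proof of the graded Nakayama lemma. The paper does not actually supply a proof of this statement; it simply cites \cite{B4}, Lemme~1.3 and \cite{C}, Exp.~15, Prop.~6, so your direct degree-by-degree argument is precisely what one expects to find behind those references.
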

%%%%%%%
\begin{proof}
See \cite{B4}, Lemme 1.3 (see also \cite{C}, Exp. 15, Prop. 6). 
\end{proof}

The $A$-module $M$ is said to be \emph{graded-free} if it is isomorphic to a direct sum of shifs $A[-l_i]$ of $A$. 
We remark that a bounded below $\ZZ$-graded $A$-module $M$ is graded-free if and only if its underlying module (\textit{i.e.} forgetting the grading) is free, 
if and only if it is projective (as a graded module or not), if and only if $\Tor_{\bullet}^{A}(k,M) = 0$, for all $\bullet \geq 1$ (or just $\bullet = 1$). 
This will follow from the comments on projective covers. 
Furthermore, it is easy to see that the graded dual of a projective graded left (resp., right) $A$-module is an injective graded right (resp., left) $A$-module (see \cite{C}, Exp. 15, Prop. 1). 

A surjective morphism $f : M \rightarrow M'$ in $A$-$\grMod$ is called \emph{essential} if for each morphism $g : N \rightarrow M$ in $A$-$\grMod$ such that $f\circ g$ is surjective, then $g$ is also surjective. 
As an application of the Nakayama Lemma we have the following result which characterizes essential surjective maps. 
%%%%%%%
\begin{lemma}
\label{lem:biyectiontensor}
Let $f : M \rightarrow M'$ be a morphism in the category $A$-$\grMod$.
\begin{itemize}
\item[\textit{(i)}] Suppose that $M'$ is left bounded and that $f$ is surjective and essential. 
                          Then $1_k\otimes_A f$ is bijective. 
                          Moreover, if $M$ is also left bounded, the converse holds.
\item[\textit{(ii)}] Assume that $f$ is surjective and $M$ is pure in degree $l$. 
                           Then $f$ is essential if and only if $f_{l} : M_{l} \rightarrow M'_{l}$ is injective.
\end{itemize}
\end{lemma}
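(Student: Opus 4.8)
**The plan is to prove each of the two statements by combining the Nakayama Lemma (Lemma \ref{prop:trivialmod}) with the defining property of essential surjections.**

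For part \textit{(i)}, I would argue as follows. Suppose $f\colon M\to M'$ is surjective and essential with $M'$ left bounded. Since $1_k\otimes_A(\place)$ is right exact, $1_k\otimes_A f$ is automatically surjective, so it remains to check injectivity. Let $N=\Ker f$, so we have a short exact sequence $0\to N\to M\to M'\to 0$. Applying $k\otimes_A(\place)$ gives the exact sequence $k\otimes_A N\to k\otimes_A M\to k\otimes_A M'\to 0$, and injectivity of $1_k\otimes_A f$ is equivalent to the map $k\otimes_A N\to k\otimes_A M$ being zero, i.e. to $N\subseteq A_{>0}M$. Here is where essentiality enters: if $N\not\subseteq A_{>0}M$, pick a graded complement so that $M=N'\oplus(\text{something})$... more robustly, consider the submodule $N+A_{>0}M$; if it were proper in $M$ one could build a contradiction with essentiality. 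The cleanest route: essentiality of $f$ says that no proper submodule of $M$ maps onto $M'$; equivalently $\Ker f\subseteq AM_{>0}$, which is exactly the radical condition. Concretely, suppose $g\colon L\to M$ with $f\circ g$ surjective; if $1_k\otimes_A f$ were injective then $1_k\otimes_A(f\circ g)$ surjective forces $1_k\otimes_A g$ surjective, hence $\Coker g$ satisfies $k\otimes_A\Coker g=0$, and $\Coker g$ is left bounded (as a quotient of the left bounded $M$, once we also know $M$ is left bounded — note $M$ surjects onto left bounded $M'$ but $M$ itself need not be left bounded unless assumed, which is precisely why the converse needs the extra hypothesis), so by Lemma \ref{prop:trivialmod} $\Coker g=0$ and $g$ is surjective. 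That gives the converse direction. For the forward direction, run the same computation: from $f$ surjective and essential, take $N=\Ker f\hookrightarrow M$; the inclusion $N+A_{>0}M\hookrightarrow M$ composed with $f$ is surjective (since $f$ kills $N$ and $f(A_{>0}M)=A_{>0}M'=M'$ would need $M'$ generated in positive degrees — this is false in general, so instead one shows $A_{>0}M+N=M$ directly using that $f$ restricted to $A_{>0}M$ surjects onto $A_{>0}M'$ and $M'/A_{>0}M'=k\otimes_A M'$ is hit by the complement of $N$). Then $M=N+A_{>0}M$ gives $k\otimes_A M = k\otimes_A(M/N) = k\otimes_A M'$ via $1_k\otimes_A f$, and a dimension/grading count in each degree (everything left bounded) upgrades surjectivity to bijectivity.

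For part \textit{(ii)}, the hypothesis that $M$ is pure in degree $l$ means $M=AM_l$ with $M_l$ concentrated in degree $l$, so $k\otimes_A M\simeq M_l$ canonically, living in degree $l$. I would show the equivalence by relating essentiality of $f$ to injectivity of $1_k\otimes_A f$ via part \textit{(i)}, and then identifying $1_k\otimes_A f$ with $f_l$ up to the isomorphism $k\otimes_A M\simeq M_l$. The one subtlety is that part \textit{(i)}'s converse needs $M$ left bounded, which holds here since $M=AM_l$ with $M_l$ in degree $l$ and $A$ nonnegatively graded, so $M$ is concentrated in degrees $\geq l$. Concretely: if $f_l$ is injective, then since $M$ is generated in degree $l$, any submodule $L\subseteq M$ with $f(L)=M'$ must contain enough of $M_l$; but the image of $L$ in $k\otimes_A M\simeq M_l$ is the image of $L_l$, and $f_l$ injective plus $f(L)=M'\Rightarrow f_l(L_l)=M'_l\supseteq f_l(M_l)$... tracking this shows $L_l=M_l$, hence $L\supseteq AM_l=M$, so $f$ is essential. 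Conversely, if $f$ is essential then $1_k\otimes_A f$ is bijective by \textit{(i)}, and under $k\otimes_A M\simeq M_l$ this bijection is (a restriction/corestriction of) $f_l$, forcing $f_l$ injective.

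\textbf{The main obstacle} I anticipate is the careful bookkeeping in the forward direction of \textit{(i)}: pinning down the precise relationship between "$f$ essential" and the radical condition "$\Ker f\subseteq A_{>0}M$" without circularity, since $M$ is not assumed left bounded there and one cannot naively invoke Nakayama on $M$ itself. The trick is to only ever apply Lemma \ref{prop:trivialmod} to quotients like $\Coker g$ or $M/(N+A_{>0}M)$ that inherit left boundedness from $M'$, never to $M$ or to $\Ker f$ directly. Everything else — the right-exactness computations, the identification $k\otimes_A M\simeq M_l$ for pure modules (already recorded in the excerpt), and the degreewise dimension count — is routine.
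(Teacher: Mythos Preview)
The paper does not actually prove this lemma; it simply cites \cite{B4}, Lemme~1.5 and \cite{B2}, Prop.~2.4. So there is no ``paper's approach'' to compare against, and your task is really to supply a complete argument.

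Your converse direction in \textit{(i)} and your treatment of \textit{(ii)} are correct and standard. The genuine gap is in the forward direction of \textit{(i)}. Your proposed line --- show $M = N + A_{>0}M$ where $N = \Ker f$, then deduce $k\otimes_A M \simeq k\otimes_A(M/N)$ by a dimension count --- does not work. First, $M = N + A_{>0}M$ would give $M' = f(M) = f(A_{>0}M) = A_{>0}M'$, which by Nakayama on the left-bounded $M'$ forces $M' = 0$; so this equality is not what you want (you need the opposite inclusion $N \subseteq A_{>0}M$). Second, the ``dimension/grading count (everything left bounded)'' invokes left-boundedness of $M$, which is \emph{not} assumed in the forward direction.

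The clean argument you are circling around is this: set $K = \Ker(1_k\otimes_A f) \subseteq k\otimes_A M$, choose a graded complement $C$ so that $k\otimes_A M = K \oplus C$, and pick a graded section $s\colon k\otimes_A M \to M$ of the projection. Let $L = A\cdot s(C)$. Since $(1_k\otimes_A f)|_C$ is an isomorphism onto $k\otimes_A M'$, the image of $f(L)$ in $k\otimes_A M'$ is everything, so $k\otimes_A(M'/f(L)) = 0$; as $M'/f(L)$ is a quotient of the left-bounded $M'$, Nakayama gives $f(L) = M'$. Essentiality of $f$ then forces $L = M$, whence the image of $M$ in $k\otimes_A M$ is already $C$, i.e.\ $K = 0$. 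This uses Nakayama only on quotients of $M'$, exactly as your ``main obstacle'' paragraph anticipates, and never needs $M$ to be left bounded.
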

%%%%%%%
\begin{proof}
For the first item, see \cite{B4}, Lemme 1.5 (see also \cite{C}, Exp. 15, Prop. 7). 
For the second one, see \cite{B2}, Prop. 2.4. 
\end{proof}

In fact, the last item of the previous lemma may be generalized as follows.
%%%%%%%
\begin{proposition}
\label{prop:essentialineachdegree}
Let $f\in \hom_{A}(M,M')$ be a surjective morphism. 
If $M$ is $s$-pure in degrees $l_1,\cdots ,l_s$, then $M'$ is also $s$-pure in degrees $l_1,\cdots ,l_s$. 
Moreover, $f$ is essential if and only if the induced morphisms $f_{l_i}:M_{l_i}\rightarrow M'_{l_i}$ are injective for $1\le i \le s$.
\end{proposition}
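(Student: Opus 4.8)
The plan is to prove the two assertions separately, beginning with the claim that $M'$ is $s$-pure. I would start from the submodule filtration $0=F_0\subseteq F_1\subseteq\cdots\subseteq F_s=M$ given by $F_i=AM_{l_1}+\cdots+AM_{l_i}$, and transport it through $f$ by setting $F_i'=f(F_i)$; these are $A$-submodules of $M'$ with $F_0'=0$, $F_s'=M'$ (as $f$ is surjective) and $F_i'=Af(M_{l_1})+\cdots+Af(M_{l_i})$. Working one degree at a time, I would pick for each $i$ a graded vector subspace $M_{l_i}'$ of $M'$, concentrated in degree $l_i$, complementary to $(F_{i-1}')_{l_i}$ inside $(F_i')_{l_i}$. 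Since $f(M_{l_i})$ is concentrated in degree $l_i$ it lies in $(F_i')_{l_i}=(F_{i-1}')_{l_i}\oplus M_{l_i}'$; multiplying by $A$ and using that $F_{i-1}'$ is a submodule gives $F_i'=F_{i-1}'+AM_{l_i}'$, and an easy induction yields $F_i'=AM_{l_1}'+\cdots+AM_{l_i}'$, whence $M'=AM_{l_1}'+\cdots+AM_{l_s}'$. As $M_{l_i}'$ sits in degree $l_i$, one has $M_{l_i}'\cap(AM_{l_1}'+\cdots+AM_{l_{i-1}}')=M_{l_i}'\cap(F_{i-1}')_{l_i}=0$ by the choice of complement, so $M'$ is $s$-pure in degrees $l_1,\dots,l_s$ (allowing, as the definition does, some of the $M_{l_i}'$ to vanish).

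For the essentiality criterion I would first note that $M$ is left bounded, being concentrated in degrees $\geq l_1$, and hence so is $M'=f(M)$. By Lemma~\ref{lem:biyectiontensor}(i), $f$ is essential if and only if $1_k\otimes_A f$ is bijective; since $f$ is surjective and $k\otimes_A(\place)$ is right exact, $1_k\otimes_A f$ is already surjective, so essentiality of $f$ amounts to injectivity of $1_k\otimes_A f$. I would then use the isomorphisms of graded vector spaces $k\otimes_A M\simeq\bigoplus_{i=1}^s M_{l_i}$ and $k\otimes_A M'\simeq\bigoplus_{i=1}^s M_{l_i}'$ recorded after the definition of $s$-purity: both source and target of $1_k\otimes_A f$ are then graded vector spaces supported in the distinct degrees $l_1<\cdots<l_s$, with $M_{l_i}$ (resp. $M_{l_i}'$) in degree $l_i$. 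As $1_k\otimes_A f$ preserves degrees it must be the direct sum $\bigoplus_{i=1}^s f_{l_i}$ of its homogeneous components $f_{l_i}\colon M_{l_i}\to M_{l_i}'$ --- this is exactly what the induced morphisms are --- and a direct sum of linear maps is injective precisely when every summand is, which gives the statement.

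The point I expect to be the main obstacle is clarifying what the ``induced morphism'' $f_{l_i}$ actually is: $f$ need not carry the subspace $M_{l_i}\subseteq M$ into the chosen subspace $M_{l_i}'\subseteq M'$, so $f_{l_i}$ has to be read as the degree-$l_i$ component of $1_k\otimes_A f$ under the identifications above, and one should remark that this does not depend on the chosen splittings (being graded, these change $f_{l_i}$ only by block-diagonal isomorphisms, which is harmless for injectivity). One must also be careful in the first part that $M_{l_i}'$ is a complement of $(F_{i-1}')_{l_i}$ inside $(F_i')_{l_i}$, not inside the full degree-$l_i$ component of $M'$. A more hands-on alternative for the second assertion is induction on $s$: splitting off $N=AM_{l_1}+\cdots+AM_{l_{s-1}}$, one first checks $A_{>0}M\cap N=A_{>0}N$ for an $s$-pure $M$, so that applying $k\otimes_A(\place)$ to $0\to N\to M\to M/N\to 0$ and $0\to N'\to M'\to M'/N'\to 0$ keeps them short exact; the snake lemma then reduces essentiality of $f$ to essentiality of $f|_N$ and of the map $M/N\to M'/N'$ induced by $f$, and Lemma~\ref{lem:biyectiontensor}(ii) together with the inductive hypothesis concludes using only the case $s=1$.
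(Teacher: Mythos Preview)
Your proposal is correct and follows essentially the same approach as the paper: the second assertion is reduced via Lemma~\ref{lem:biyectiontensor}(i) to the bijectivity of $1_k\otimes_A f$, and then the identifications $k\otimes_A M\simeq\bigoplus_i M_{l_i}$ and $k\otimes_A M'\simeq\bigoplus_i M'_{l_i}$ turn this into injectivity of each $f_{l_i}$. Your treatment of the first assertion is in fact more careful than the paper's, which only checks the generation condition $M'=\sum_i A f(M_{l_i})$ and leaves the intersection condition implicit; your choice of $M'_{l_i}$ as a complement of $(F'_{i-1})_{l_i}$ inside $(F'_i)_{l_i}$ makes that step explicit, and your remark on how to read $f_{l_i}$ is a useful clarification of a point the paper glosses over.
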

%%%%%%%
\begin{proof}
Let $m'\in M'$ and $m\in M$ be such that $f(m)=m'$. 
Since $M$ is $s$-pure in degrees $l_1,\cdots ,l_s$, there exist $\alpha_i \in A$ and $m_i\in M_{l_i}$ for $1\le i \le s$ such that $m= \sum_{i=1}^{s} \alpha_i m_i$. 
Therefore, $m'=\sum_{i=1}^{s} \alpha_i f(m_i)$, where $f(m_i)\in M'_{l_i}$. 
Thus, $M'$ is $s$-pure in degrees $l_1,\cdots ,l_s$.
By Lemma \ref{lem:biyectiontensor}, $f$ is essential if and only if the induced $k$-linear map
\[
1_k \otimes_A f : k \otimes_A M \longrightarrow k \otimes_A M'
\]
is bijective. However, since $M$ and $M'$ are $s$-pure then $k\otimes_A M$ and $k\otimes_A M'$ are canonically isomorphic to $k\otimes M_{l_1}+ \cdots + k\otimes M_{l_s}$ 
and $k\otimes M'_{l_1}+ \cdots +k\otimes M'_{l_s}$, respectively. 
Thus, the restrictions to each degree of $1_k \otimes_A f$ become $f_{l_i}:M_{l_i}\rightarrow M'_{l_i}$ for $1\le i\le s$. 
\end{proof}

Let $M$ be a nontrivial object in $A$-$\grMod$. 
A \emph{projective cover} of $M$ is a pair $(P,f)$ such that $P\in$ $A$-$\grMod$ is projective and $f:P\rightarrow M$ is an essential surjective morphism.
We remark that every left bounded $\mathbb{Z}$-graded $A$-module $M$ has a projective cover, which is unique up to (noncanonical) isomorphism (\textit{cf.} \cite{C}, Exp. 15, Th\'eo. 2).
Moreover, given $M$ a bounded below $A$-module, a projective cover may be explicitly constructed as follows. 
Since $M \neq 0$, the Nakayama lemma tells us that $M/(A_{>0} \cdot M) \simeq k \otimes_{A} M$ is a nontrivial graded vector space. 
Consider a section $s$ of the canonical projection $M \rightarrow M/(A_{>0} \cdot M) \simeq k \otimes_{A} M$. 
Now, we define $P = A \otimes (k \otimes_{A} M)$ together with the $A$-linear map $f : P \rightarrow M$ given by $f(\alpha \otimes v) = \alpha s(v)$, 
for $\alpha \in A$ and $v \in k \otimes_{A} M$. 
Using the previous lemma one directly gets that $(P,f)$ is a projective cover of $M$. 

We recall that a (graded) projective resolution $(P_{\bullet},d_{\bullet})$ of a graded $A$-module $M$ is \emph{minimal} if 
$d_{0} : P_{0} \rightarrow M$ is a projective cover (or equivalently, it is essential) and each of the maps $P_{i} \rightarrow \Ker(d_{i-1})$ induced  by $d_{i}$ is also essential, 
for all $i \in \NN$. 
We want to remark the important fact that, by iterating the process of considering projective covers for bounded below modules, 
one may easily prove that any bounded below graded $A$-module has a minimal projective resolution (see \cite{B4}, Th\'eo. 1.11).
If the $A$-module $M$ has a minimal projective resolution $(P_{\bullet},d_{\bullet})$, for any other projective resolution $(Q_{\bullet},d'_{\bullet})$ of $M$, there exists an 
isomorphism of (augmented) complexes $Q_{\bullet} \simeq P_{\bullet} \oplus H_{\bullet}$, where $H_{\bullet}$ is acyclic (see \cite{B4}, Prop. 2.2). 
Additionally, the assumption on the minimality of the projective resolution implies that the differential of the induced complex $k \otimes_{A} P_{\bullet}$ vanishes (see \cite{C}, Exp. 15, Prop. 10, 
or \cite{B4}, Prop. 2.3), so if $(P_{\bullet},d_{\bullet})$ denotes such a minimal projective resolution, one also easily gets that $P_{\bullet} \simeq A \otimes \Tor_{\bullet}^{A} (k,M)$. 
Combining the results of the two previous sentences, it is trivial to see that if $(Q_{\bullet},d'_{\bullet})$ is a projective resolution of a graded $A$-module $M$ 
having a minimal projective resolution, then the former is minimal if and only if the induced differential of $k \otimes_{A} Q_{\bullet}$ vanishes. 
Projective resolutions in this context formed of finitely generated modules are often referred to as \emph{pure projective resolutions}.

If $N$ is left bounded, let $(P_{\bullet},d_{\bullet})$ be a (minimal) graded projective resolution of $N$. 
As usual, we denote
\begin{align*}
\mathcal{E}xt_A^i (N,N') & = H^i (\mathcal{H}om_A (P_{\bullet},N')),
\\
\ext_A^i (N,N') & = H^i (\hom_A (P_{\bullet}, N')).
\end{align*}
Note that $\ext_A^i (N,N') = \mathcal{E}xt_A^i (N,N')_{0}$ and that if the projective resolution of $N$ is composed of finitely generated projective $A$-modules, 
there is a canonical identification $\mathcal{E}xt_A^{\bullet} (N,N') \simeq \Ext_A^{\bullet} (N,N')$. 
Moreover, using a very simple duality argument one can see that, if $M$ is a bounded below graded left $A$-module, then there is a canonical isomorphism of graded vector spaces 
\begin{equation}
\label{eq:isotorext}
     \mathcal{E}xt_{A}^{i}(M,k) \simeq \Tor^{A}_{i}(k,M)^{\#},
\end{equation}   
for all $i \in \NN_{0}$ (see \cite{B4}, Eq. (2.15), but \textit{cf.} also \cite{C}, Exp. 15, Prop. 2). 

We end this section by stating the beginning of the minimal projective resolution of the trivial left $A$-module $k$ for any nonnegatively graded connected algebra $A$. 
The analogous statements for the trivial right $A$-module $k$ are immediate. 
We know that the minimal projective resolution of the trivial (left) $A$-module $k$ starts as 
\[
A \otimes V \overset{\delta_1}{\longrightarrow} A \overset{\delta_0}{\longrightarrow} k \longrightarrow 0,
\]
where $\delta_0$ is the augmentation of the algebra $A$, $V \simeq A_{>0}/(A_{>0} \cdot A_{>0})$ is a vector space spanned by a minimal set of (homogeneous) generators of $A$, 
and $\delta_1$ is the restriction of the product of $A$ (see \cite{C}, Exp. 15, end of Section 7). 
This implies that $A$ is in fact a quotient of the tensor algebra $T(V)$. 
Furthermore, if we set $A = T(V)/I$, for $I$ a homogeneous ideal, it is also well-known (and follows easily from the definition) that $\Ker(\delta_{1}) \simeq I/(I \otimes V)$ 
(as graded vector spaces), so there is an isomorphism of graded vector spaces $k \otimes_{A} \Ker(\delta_{1}) \simeq I/(T(V)_{>0} \otimes I + I \otimes T(V)_{>0})$. 
A \emph{space of relations} $R$ of $A$ is defined to be a graded vector subspace of $I$ which is isomorphic to 
$I/(T(V)_{>0} \cdot I + I \cdot T(V)_{>0})$ under the canonical projection. 
Notice that its Hilbert series is thus uniquely determined, and the same holds for its first nonvanishing homogeneous component. 
It is trivially verified that the ideal of $T(V)$ generated by $R$ coincides with $I$. 
Hence, if $R$ is a space of relations of $A$, we have an isomorphism of graded vector spaces $k \otimes_{A} \Ker(\delta_{1}) \simeq R$ 
(see \cite{Go}, Lemma 1, for complete expressions of the graded vector spaces $\Tor_{\bullet}^{A}(k,k)$, for $\bullet \in \NN_{0}$, in terms of $I$ and the irrelevant ideal $T(V)_{>0}$). 
Hence, $A \otimes R \rightarrow \Ker(\delta_{1})$ is a projective cover, and the beginning of the minimal projective resolution of the trivial $A$-module $k$ is of the form 
\[
A \otimes R \overset{\delta_2}{\longrightarrow} A \otimes V \overset{\delta_1}{\longrightarrow} A \overset{\delta_0}{\longrightarrow} k \longrightarrow 0,
\]
where $\delta_2$ is induced by the usual map $\alpha \otimes v_{1} \dots v_{n} \mapsto \alpha v_{1} \dots v_{n-1} \otimes v_{n}$. 

%%%%%%%%%%%%%%%%%%%%%%%%%%%%%%%%%%%%%%%%%%%%%%%%%%%%%%%%%%%%%%%%%%%%%%%%%%%%%%%%%%%%%%%%%%%%%%%%%%%%%%%%%%%%%%%%%%%%%%%%%%%%%%%%%%%%%%%%%%%%%%%%%%%%%%%%%%%%%%%

\section{\texorpdfstring{Multi-Koszul algebras}{Multi-Koszul algebras}}
\label{sec:multikoszul}

From now on, $A$ will always denote a finitely generated nonnegatively graded connected algebra generated in degree $1$. 
This means that there exists a finite dimensional vector space $V$ considered to be in degree $1$ and a surjective morphism of graded algebras of the form $T(V) \rightarrow A$, so $A \simeq T(V)/I$, where $I \subseteq T(V)$ is a homogeneous ideal of $T(V)$. 
To avoid redundancy we will always assume that the vector space $V$ is canonically isomorphic to $A_{>0}/(A_{>0} \cdot A_{>0})$ (as graded vector spaces). 
Let us denote by $R$ a space of relations of $A$. 
We remark that in this situation, it may be equivalently defined as follows: 
for each $n \geq 2$, let $R_n$ be a subspace of $I_n$ such that it is supplementary to $I_{n-1} \otimes V + V \otimes I_{n-1}$. 
The graded vector space $R=\bigoplus_{n \geq 2} R_n$ is clearly a space of relations of $A$. 
We may thus suppose that $A = T(V)/\cl{R}$, for $R \subseteq T(V)_{\geq 2}$ a graded vector subspace. 
We will further assume that $A$ has a \emph{finite number of relations}, \textit{i.e.} that $R$ is finite dimensional, so there exists a finite subset $S$ of $\mathbb{N}_{\geq 2}$ such that 
$R = \oplus_{s \in S} R_{s}$ and $R_{s} \subseteq V^{(s)}$. 
We shall say that such an algebra $A=T(V)/I$ is \emph{$S$-multi-homogeneous}, if we want to stress the degrees of the space of relations of $A$. 

We remark however that the set $S \subseteq \NN_{\geq 2}$ may not be completely determined by $A$. 
If $A = T(V)/\cl{R}$ is $S$-multi-homogeneous, so $R = \oplus_{s \in S} R_{s}$, 
we could consider any $S' \supseteq S$ included in $\NN_{\geq 2}$, and by writing $R = \oplus_{s' \in S'} R_{s'}$, where $R_{s'}=0$ for $s' \in S' \setminus S$, 
we may also say that $A$ is $S'$-multi-homogeneous. 
Nonetheless, it is easy to see that the family of all subsets $S \subseteq \NN_{\geq 2}$ such that $A$ is $S$-multi-homogeneous contains a unique minimal element 
$\bar{S}$ given by all $s \in \NN_{\geq 2}$ such that $R_{s} \neq 0$. 

The previous considerations tell us that, under the previous assumptions, we may then work in the following concrete setting.  
We consider a graded algebra $A$ of the form $T(V)/\cl{R}$, where $V$ is a finite dimensional vector space, considered to be concentrated in degree $1$, and $R = \oplus_{s \in S} R_{s}$, 
where $R_{s} \subseteq V^{(s)}$ and $S \subseteq \NN_{\geq 2}$ is a finite subset. 
We shall suppose from now on that $A$ satisfies these hypotheses, unless otherwise stated. 
However, we will sometimes repeat (some of) the assumptions for emphasis.  
 
Note that the fact that $R$ is a space of relations of $A$ implies the following \emph{minimality condition}:
\begin{equation}\label{eq:minimality}
R_s\bigcap \Big(\sum_{s \in S} \sum_{j=0}^{s-t} V^{(j)}\otimes R_t\otimes V^{(s-t-j)}\Big)=0.
\end{equation}
Conversely, given $R = \oplus_{s \in S} R_{s}$ satisfying \eqref{eq:minimality}, where $R_{s} \subseteq V^{(s)}$ and $S \subseteq \NN_{\geq 2}$ is a finite subset, 
then $R$ is a relation space of the algebra $T(V)/\cl{R}$.  

The two-sided graded ideal $I=\bigoplus_{n\in \mathbb{Z}} I_n$ generated by $R$ in the tensor algebra $T(V)$ may be explicitly presented as 
\begin{align*}
I_n &=\sum_{s \in S} \sum_{j=0}^{n-s} V^{(j)}\otimes R_s \otimes V^{(n-s-j)}.
\end{align*}

The homogeneous components of the algebra $A$ are thus given by the vector spaces $A_n = V^{(n)}/I_n$, for $n\in \mathbb{N}_0$, and zero for $n<0$. 

For each $s \in \mathbb{N}_{\geq 2}$, we consider the map $n_s:\mathbb{N}_{0}\rightarrow \mathbb{N}_{0}$ of the form 
\[
n_s(2l)=sl, \hskip 0.4cm n_s(2l+1)=sl+1.
\]

Notice that $n_s(t+2)=n_s(t)+s$, for all $t \in \mathbb{N}_0$, and 
\[     n_s(t+1) - n_s(t) = \begin{cases} 
                                            1, &\text{if $t$ is even},
                                            \\
                                            s - 1, &\text{if $t$ is odd}.
                                     \end{cases}
\]
We shall use these elementary properties, specially in Section \ref{sec:2grados}, without further mention. 

If $s\in S$, we will denote
\[     J_{i}^{s} = \bigcap_{j=0}^{n_{s}(i)-s} V^{(j)} \otimes R_{s} \otimes V^{(n_{s}(i)-s-j)},     \]
for $i \geq 2$, and $J_i^{s} = V^{(i)}$, for $i = 0, 1$. 
We remark that the minimality condition \eqref{eq:minimality} implies that 
$J_{i}^{s}\cap (V^{(j)} \otimes J_{i'}^{s'} \otimes V^{(n_s(i)-n_{s'}(i')-j)})=0$,
for all different $s,s'\in S$, $j = 0, \dots, n_s(i)-n_{s'}(i')$, and $i, i' \in \NN_{\geq 2}$ such that $n_{s'}(i') \le n_s(i)$. 

Moreover, we define 
\[      
J_i = \bigoplus_{s \in S}  J_{i}^{s}, 
\]
if $i \geq 2$, and $J_i = V^{(i)}$, if $i = 0, 1$. 
Note that $J_{2} = R$. 

%%%%%%%
\begin{definition}
\label{def:multikoszul}
Let $A$ be an $S$-multi-homogeneous algebra with space of relations $R = \oplus_{s \in S} R_s$, $R_s \subseteq V^{(s)}$, and $S$ a finite subset of $\NN_{\ge 2}$.
We suppose the usual minimality condition on $R$. 
The \emph{left multi-Koszul complex} $(K(A)_{\bullet},\delta_{\bullet})$ of $A$ is defined by $K(A)_0 = A$, $K(A)_1 = A \otimes V$ and
$K(A)_i = A \otimes J_{i}$ for $i \geq 2$, together with the differential $\delta_{\bullet}$ where $\delta_1$ is induced by the multiplication on A, and, for $i \geq 2$,  
\[
\delta_{i} : A \otimes J_{i} \rightarrow A \otimes J_{i-1}
\]
is given by the restriction of the map $\hat{\delta}_{i} : A \otimes (\oplus_{s \in S} V^{(n_{s}(i))}) \rightarrow A \otimes (\oplus_{s \in S} V^{(n_{s}(i-1))})$, where 
\[
\hat{\delta}_i(\alpha \otimes v_{j_1} \cdots v_{j_{n_s(i)}})=\begin{cases}
\alpha v_{j_1}\cdots v_{j_{s-1}} \otimes v_{j_s}\cdots v_{j_{n_s(i)}}, & \text{if $i$ is even,}
\\
\alpha v_{j_1}\otimes v_{j_2}\cdots v_{j_{n_s(i)}}, & \text{if $i$ is odd,}
\end{cases}
\]
for $s\in S$. 
Notice that $\delta_{i}(A \otimes J_{i}^{s}) \subseteq A \otimes J_{i-1}^{s}$, for $i \geq 3$ and $s \in S$. 
It is clear that $\delta_{i+1} \circ \delta_{i} = 0$, for $i \in \NN$. 
We may also consider this complex together with the augmentation $\delta_{0} : K(A)_{0} \rightarrow k$ given by the augmentation of the algebra $A$, which we may depict as follows
\begin{equation*}
\cdots \rightarrow K(A)_i \overset{\delta_i}{\rightarrow} K(A)_{i-1}\rightarrow \cdots \rightarrow K(A)_1 \overset{\delta_1}{\rightarrow} K(A)_0 \overset{\delta_{0}}{\rightarrow} k \rightarrow 0.
\end{equation*}
Note also that the left multi-Koszul complex of $A$ is composed of graded-free left $A$-modules, and the differentials are $A$-linear maps preserving the degree.  

We say that $A$ is \emph{left multi-Koszul} if the (augmented) left multi-Koszul complex of $A$ provides a projective resolution of the trivial left $A$-module $k$, 
and in this case we may call the complex the \emph{left multi-Koszul resolution} for $A$.
\end{definition}
%%%%%%%

We remark that the left multi-Koszul complex of $A$ coincides with the minimal projective resolution of the left module $k$ 
seen at the end of Section \ref{sec:preliminaries} up to homological degree $2$. 
It is clear that the left multi-Koszul resolution for $A$ is minimal (because the induced differential of the complex $k \otimes_{A} K(A)_{\bullet}$ vanishes) and projective.
It is straightforward to see that an algebra is left multi-Koszul if and only if its left multi-Koszul complex defined above is acyclic in positive homological degrees. 

%%%%%%%
\begin{remark}
\label{rem:bdef}
Since an algebra $A$ may be regarded to be $S$-multi-homogeneous for different subsets $S \subseteq \NN_{\geq 2}$, one may wonder whether the definition of left multi-Koszul algebra actually depends on the subset $S$. 
It is however trivially verified that this is not the case, \textit{i.e.} if $A$ is regarded as $S$-multi-homogeneous and also $S'$-multi-homogeneous, then it is 
left $S$-multi-Koszul if and only if it is left $S'$-multi-Koszul. 
The same phenomenon also occurs for the definition of right multi-Koszul property presented in the following remark. 
\end{remark}
%%%%%%%

\begin{remark}
\label{rem:rightmultikoszul}
There is also an analogous definition of \emph{right multi-Koszul complex}  and hence of \emph{right multi-Koszul algebra}. 
Using the same notation as in the previous definition, the right multi-Koszul complex $(K(A)'_{\bullet},\delta'_{\bullet})$ of $A$
is defined by $K(A)'_{0} = A$, $K(A)_{1} = V \otimes A$ and $K(A)'_{i} = J_{i} \otimes A$, for $i \geq 2$, together with the differential 
$\delta'_{\bullet}$ where $\delta'_{1}$ is induced by the multiplication on A, and, for $i \geq 2$, 
\[
\delta'_{i} : J_{i} \otimes A \rightarrow J_{i-1} \otimes A
\]
is given by the restriction of the map $\hat{\delta}'_{i} : (\oplus_{s \in S} V^{(n_{s}(i))}) \otimes A \rightarrow (\oplus_{s \in S} V^{(n_{s}(i-1))}) \otimes A$, where 
\[
\hat{\delta}'_i(v_{j_{n_s(i)}} \cdots v_{j_{1}} \otimes \alpha)=\begin{cases}
v_{j_{n_s(i)}}\cdots v_{j_{s}} \otimes v_{j_{s-1}}\cdots v_{j_{1}} \alpha, & \text{if $i$ is even,}
\\
v_{j_{n_s(i)}}\cdots v_{j_2} \otimes  v_{j_{1}} \alpha, & \text{if $i$ is odd,}
\end{cases}
\]
for $s\in S$. 
We will also consider this complex together with the augmentation $\delta'_{0} : K(A)'_{0} \rightarrow k$ given by the augmentation of the algebra $A$. 

Notice that the right multi-Koszul complex of $A$ coincides with the minimal projective resolution of the right module $k$ 
mentioned at the end of Section \ref{sec:preliminaries} up to homological degree $2$. 

We say that $A$ is \emph{right multi-Koszul} if the (augmented) right multi-Koszul complex of $A$ provides a projective resolution of the trivial right $A$-module $k$, 
and in this case we may call the complex the \emph{right multi-Koszul resolution} for $A$.

It is easy to check that the right multi-Koszul resolution for $A$ is minimal (because the induced differential of the complex $K(A)'_{\bullet} \otimes_{A} k$ vanishes) and projective, 
and that an algebra is right multi-Koszul if and only if its right multi-Koszul complex defined above is acyclic in positive homological degrees. 
\end{remark}
%%%%%%%

\begin{remark}
Note that the previous definition of left or right multi-Koszul property coincides with the corresponding one given in \cite{BDW}, Section 5, if the algebra is homogeneous, 
and in principle not to the one given in \cite{B2}, Definition 2.10. 
Nonetheless, using \cite{BDW}, Prop. 3, one immediately deduces that for a homogeneous algebra both definitions are equivalent, implying that such an algebra is multi-Koszul 
if and only if it is generalized Koszul, and in fact the left (resp., right) multi-Koszul complex coincides with the (generalized) 
left (resp., right) Koszul complex defined by Priddy if the algebra is quadratic and by Berger if the algebra is homogeneous. 
\end{remark}
%%%%%%%

\begin{remark}
Notice that, for $d \in \NN_{> 2}$, if $A$ is a left $\{2,d\}$-multi-Koszul algebra for the previous definition, then it is in particular a $2$-$d$-Koszul algebra in the sense defined in \cite{GM}. 
The converse however does not hold (see the algebra $B$ in \cite{CG}, which is not $\{ 2, 3\}$-multi-Koszul). 
\end{remark}
%%%%%%%

Since the length of a minimal projective resolution of $k$ gives the global dimension of $A$, the following proposition is immediate.
%%%%%%%
\begin{proposition}
Let $A=T(V)/\cl{R}$ be an $S$-multi-homogeneous algebra (with $S$ a finite subset of $\mathbb{N}_{\geq 2}$) such that $R=\bigoplus_{s\in S}R_s$ 
satisfies the minimality condition. 
If the global dimension of $A$ is $2$, then $A$ is $S$-multi-Koszul.
\end{proposition}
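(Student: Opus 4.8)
The plan is to use the general fact, recalled in Section \ref{sec:preliminaries}, that the length of a minimal graded projective resolution of the trivial module $k$ equals the global dimension of $A$, together with the observation already made after Definition \ref{def:multikoszul} that the left multi-Koszul complex of $A$ agrees with the minimal projective resolution of $k$ up to homological degree $2$.

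First I would recall that, since $A$ is finitely generated, nonnegatively graded and connected, it admits a minimal graded projective resolution $(P_\bullet,d_\bullet)$ of the trivial left module $k$, and that $\pd_A(k) = \gd(A)$; hence the hypothesis $\gd(A) = 2$ means $P_i = 0$ for $i \geq 3$ and $P_2 \neq 0$ (unless $A$ is trivial, which is excluded). Next I would invoke the explicit description of the beginning of this minimal resolution given at the end of Section \ref{sec:preliminaries}: it starts as
\[
A \otimes R \overset{\delta_2}{\longrightarrow} A \otimes V \overset{\delta_1}{\longrightarrow} A \overset{\delta_0}{\longrightarrow} k \longrightarrow 0,
\]
and by minimality $\Ker(\delta_2) \subseteq A_{>0}\otimes R$. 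Now compare this with the left multi-Koszul complex $(K(A)_\bullet,\delta_\bullet)$: by construction $K(A)_0 = A$, $K(A)_1 = A\otimes V$, $K(A)_2 = A\otimes J_2 = A\otimes R$, and the differentials in homological degrees $0,1,2$ coincide with $\delta_0,\delta_1,\delta_2$ above; moreover $K(A)_i = A\otimes J_i$ for $i\geq 3$.

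The key step is then to show that the multi-Koszul complex is acyclic in positive homological degrees, which by the remark after Definition \ref{def:multikoszul} is exactly the left multi-Koszul property. Since the complex already agrees with the (exact) minimal resolution of $k$ through degree $2$, exactness at homological degrees $0$ and $1$ is automatic, and exactness at degree $2$ amounts to $\Ker(\delta_2) = \im(\delta_3)$. But $\gd(A)=2$ forces $P_3 = 0$, hence $\Ker(\delta_2) = \im(d_3) = 0$ in the minimal resolution; that is, $\delta_2 : A\otimes R \to A\otimes V$ is injective. Because the map $\delta_3 : A\otimes J_3 \to A\otimes J_2 = A\otimes R$ has image contained in $\Ker(\delta_2) = 0$ (as $\delta_2\circ\delta_3 = 0$ and $\delta_2$ is injective... more precisely one checks $\im(\delta_3)\subseteq\Ker(\delta_2)=0$), and then by induction $\im(\delta_{i+1})\subseteq\Ker(\delta_i)$ must also vanish for all $i\geq 2$, since at each stage the relevant kernel is trivial; thus the truncated complex $0 \to A\otimes R \overset{\delta_2}{\to} A\otimes V \overset{\delta_1}{\to} A \to 0$ is a projective resolution of $k$, and this is precisely the (augmented) left multi-Koszul complex with $K(A)_i = 0$ effectively in degrees $\geq 3$ — or, more carefully, one argues that all higher $J_i$ must be such that the complex stays exact. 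Finally, by the left–right symmetry of these arguments (using the right multi-Koszul complex and the right-module analogue of the resolution at the end of Section \ref{sec:preliminaries}, together with the fact that $\gd(A)$ is the same whether computed with left or right modules), the same conclusion gives that $A$ is right multi-Koszul, so $A$ is $S$-multi-Koszul.

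The main obstacle — really a point to be careful about rather than a deep difficulty — is to pin down precisely what happens in homological degrees $\geq 3$: one must either argue that $J_i$ with $i\geq 3$ plays no role because the resolution has already terminated, or verify directly that $\delta_3$ is forced to be zero and that this propagates. The cleanest route is to use the uniqueness (up to summands with acyclic complements) of the minimal projective resolution together with $\pd_A(k)=2$, so that the multi-Koszul complex, which is a minimal complex of graded-free modules coinciding with the minimal resolution through degree $2$, must in fact be the minimal resolution and therefore be acyclic in positive degrees; invoking the comment from Section \ref{sec:preliminaries} that a projective resolution is minimal iff the induced differential on $k\otimes_A(\place)$ vanishes — which holds here since $k\otimes_A K(A)_\bullet$ has zero differential — closes the argument.
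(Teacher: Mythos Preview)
Your overall strategy matches the paper's one-line justification, and you correctly isolate the delicate point: what happens in homological degrees $\geq 3$. However, neither of the two arguments you offer for this step actually closes.

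The inductive claim ``$\im(\delta_{i+1}) \subseteq \Ker(\delta_i)$ must vanish since at each stage the relevant kernel is trivial'' breaks at the first step: from $\im(\delta_3) \subseteq \Ker(\delta_2) = 0$ you get $\delta_3 = 0$, but then $\Ker(\delta_3) = A \otimes J_3$, which is not obviously zero --- so there is nothing to feed into the next step. Your alternative route via uniqueness of minimal resolutions is circular: the comparison statement from Section~\ref{sec:preliminaries} applies to projective \emph{resolutions}, and you do not yet know the multi-Koszul complex is exact. Merely having $k \otimes_A K(A)_\bullet$ with zero differential does not force a complex of graded-free modules to be a resolution (take any nonzero $K(A)_3$ with $\delta_3 = 0$ as a counterexample).

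The missing ingredient --- and presumably what the paper regards as ``immediate'' --- is that $J_i = 0$ for all $i \geq 3$, so that $K(A)_i = 0$ in those degrees and the multi-Koszul complex literally equals the length-$2$ minimal resolution. This is easy once stated: the restriction of $\delta_3$ to $1 \otimes J_3$ is the composite $J_3 \hookrightarrow V \otimes R = A_1 \otimes R \subseteq A \otimes R$, which is injective (since $A_1 = V$) and lands in $\Ker(\delta_2) = 0$; hence $J_3 = 0$. Then for each $s \in S$ and $i \geq 3$ one has
\[
J_i^s \subseteq (V^{(n_s(i)-s-1)} \otimes R_s \otimes V) \cap (V^{(n_s(i)-s)} \otimes R_s) = V^{(n_s(i)-s-1)} \otimes J_3^s = 0,
\]
so $K(A)_i = A \otimes J_i = 0$ for $i \geq 3$, and the augmented multi-Koszul complex is exact. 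With this in place the rest of your write-up (and the appeal to left--right symmetry) is fine.
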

%%%%%%%

We also have the following result, which shows a way to produce (an infinite number of) examples of multi-Koszul algebras. 
%%%%%%%
\begin{proposition}
\label{prop:libre}
Let $\{ B^{s} : s \in S \}$, where $S \subseteq \NN_{\geq 2}$, be a finite collection of nonnegatively graded connected algebras such that $B^{s}$ is $s$-Koszul, for each $s \in S$. 
Then, the free product (\textit{i.e.} the coproduct in the category of graded algebras) $A = \coprod_{s \in S} B^{s}$ of the collection $\{ B^{s} : s \in S \}$ is a multi-Koszul algebra. 
\end{proposition}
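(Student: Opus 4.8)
The plan is to realize the left multi-Koszul complex of $A=\coprod_{s\in S}B^{s}$, in homological degrees $\geq 2$, as a direct sum of the (generalized) Koszul complexes of the $B^{s}$ induced up to $A$, and to control the bottom of the complex by the general description of the beginning of the minimal resolution of $k$ recalled in Section~\ref{sec:preliminaries}. First I would fix presentations $B^{s}=T(V_{s})/\cl{R_{s}}$ with $R_{s}\subseteq V_{s}^{(s)}$ a (finite dimensional) space of relations, for each $s\in S$. Then the free product is $A=T(V)/\cl{R}$ with $V=\bigoplus_{s\in S}V_{s}$ and $R=\bigoplus_{s\in S}R_{s}\subseteq\bigoplus_{s\in S}V^{(s)}$. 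Using a basis of $V$ adapted to the decomposition $V=\bigoplus_{s\in S}V_{s}$, and the induced basis of tensor words of $T(V)$, one checks that the relation spaces of the various $B^{s}$ involve pairwise disjoint sets of words; this immediately gives that $R$ satisfies the minimality condition \eqref{eq:minimality}, so $A$ is $S$-multi-homogeneous and its left multi-Koszul complex $(K(A)_{\bullet},\delta_{\bullet})$ is defined. The same disjointness argument shows that for $i\geq 2$ the intersection defining $J_{i}^{s}$ forces all tensor slots to lie in $V_{s}$, so that $J_{i}^{s}$ equals the space $W_{i}^{s}\subseteq V_{s}^{(n_{s}(i))}$ occurring in Berger's (generalized) Koszul complex $(B^{s}\otimes W_{\bullet}^{s},\partial_{\bullet}^{s})$ of the $s$-homogeneous algebra $B^{s}$. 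Moreover, comparing the explicit differential of Definition~\ref{def:multikoszul} with Berger's differential, for every $i\geq 2$ the map $\delta_{i}$ sends $A\otimes J_{i}^{s}$ into $A\otimes J_{i-1}^{s}$ and restricts there to $A\otimes_{B^{s}}\partial_{i}^{s}$, where $A$ is regarded as a right $B^{s}$-module, while $\delta_{1}\colon A\otimes V\to A$ is the multiplication.

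Next I would use that each $B^{s}$ is $s$-Koszul, i.e.\ that the augmented complex $(B^{s}\otimes W_{\bullet}^{s},\partial_{\bullet}^{s})\to k\to 0$ is exact, together with the standard fact that the free product $A$ is free --- hence flat --- as a right (and as a left) $B^{s}$-module, which follows from the usual normal form for elements of a coproduct of augmented algebras. Applying the exact functor $A\otimes_{B^{s}}(\place)$ produces, for each $s\in S$, an exact complex $L_{\bullet}^{s}\to A\otimes_{B^{s}}k\to 0$ whose term in homological degree $i\geq 1$ is $A\otimes W_{i}^{s}$ and whose differential is $A\otimes_{B^{s}}\partial_{i}^{s}$. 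By the previous paragraph, in homological degrees $\geq 2$ the left multi-Koszul complex of $A$ is exactly $\bigoplus_{s\in S}L_{\bullet}^{s}$ with the block-diagonal differential $\bigoplus_{s\in S}(A\otimes_{B^{s}}\partial_{\bullet}^{s})$ --- the index $s$ is not mixed by $\delta_{i}$ for $i\geq 2$, even for $i=2$, since $\partial_{2}^{s}$ lands in $A\otimes V_{s}\subseteq A\otimes V$ because $R_{s}\subseteq V_{s}^{(s)}$. Hence $H_{n}(K(A)_{\bullet})=\bigoplus_{s\in S}H_{n}(L_{\bullet}^{s})=0$ for all $n\geq 2$. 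Finally, $K(A)_{\bullet}$ coincides, together with its differentials, with the beginning $A\otimes R\to A\otimes V\to A\to k\to 0$ of the minimal projective resolution of the trivial module $k$ up to homological degree $2$ (as remarked right after Definition~\ref{def:multikoszul}); since that complex is exact, one gets $H_{0}(K(A)_{\bullet})=k$ and $H_{1}(K(A)_{\bullet})=0$. Thus the augmented left multi-Koszul complex of $A$ is a resolution of $k$, i.e.\ $A$ is left multi-Koszul; the symmetric argument, using that $A$ is also free as a left $B^{s}$-module and the right (generalized) Koszul complexes of the $B^{s}$, shows that $A$ is right multi-Koszul, so $A$ is multi-Koszul.

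I expect the main obstacle to be the bookkeeping of the first paragraph: checking carefully that $J_{i}^{s}$ equals Berger's space $W_{i}^{s}$ and, above all, that the concrete differential of Definition~\ref{def:multikoszul} restricted to the $s$-summand is literally $A\otimes_{B^{s}}\partial_{i}^{s}$, so that in homological degrees $\geq 2$ the complex $K(A)_{\bullet}$ splits as the direct sum of the complexes obtained by inducing the Koszul complexes of the $B^{s}$ up to $A$. A secondary, though standard, point is the freeness of a coproduct of augmented algebras over each of its factors, which is exactly what makes $A\otimes_{B^{s}}(\place)$ exact.
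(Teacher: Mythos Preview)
Your proposal is correct and follows essentially the same route as the paper: present $A$ as $T(\bigoplus_{s}V_{s})/\cl{\bigoplus_{s}R_{s}}$, identify $J_{i}^{s}$ with Berger's spaces for $B^{s}$, use the freeness of $A$ over each $B^{s}$ to induce up the Koszul complexes and obtain the decomposition $K(A)_{\bullet}\simeq\bigoplus_{s}A\otimes_{B^{s}}K(B^{s})_{\bullet}$ in degrees $\geq 1$ (with the differential splitting for $\bullet\geq 2$), and handle homological degree $1$ by the general remark that the multi-Koszul complex agrees with the minimal resolution up to degree $2$. The only cosmetic difference is that the paper invokes a change-of-base spectral sequence where you simply use flatness of the free factor, which is of course enough.
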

%%%%%%%
\begin{proof}
Let us suppose that $B^{s} = T(V_{s})/\cl{R_{s}}$, for $s \in S$, is an $s$-Koszul algebra, where $R_{s} \subseteq V_{s}^{(s)}$. 
By the definition of the free product of the collection $\{ B^{s} : s \in S \}$, we may consider that $A = T(V)/\cl{R}$, where $V = \oplus_{s \in S} V_{s}$, and $R = \oplus_{s \in S} R_{s}$. 
The canonical inclusion $B^{s} \hookrightarrow A$ is a morphism of graded algebras, and it makes $A$ a free graded (left or right) $B^{s}$-module. 

On the one hand, it is clear that, if $s \in S$,     
\[     J_{i}^{s} = \bigcap_{j=0}^{n_{s}(i)-s} V^{(j)} \otimes R_{s} \otimes V^{(n_{s}(i)-s-j)} =\bigcap_{j=0}^{n_{s}(i)-s} V_{s}^{(j)} \otimes R_{s} \otimes V_{s}^{(n_{s}(i)-s-j)},     \]  
for $i \geq 2$. 
We recall that $J_i = \oplus_{s \in S} J_{i}^{s}$. 
Moreover, $J_{1} = V = \oplus_{s \in S} V_{s}$, and $J_{0} = k$. 
In fact, if $(K(B^{s})_{\bullet},\delta_{\bullet}^{s})$ denotes the Koszul complex of $B^{s}$, which is acyclic in positive homological degrees by assumption, 
then $K(B^{s})_{\bullet} = B^{s} \otimes J_{\bullet}^{s}$. 
Since $A$ is free as a $B^{s}$-module, we have that $A \otimes_{B^{s}} K(B^{s})_{\bullet} = A \otimes J_{\bullet}^{s}$ is also acyclic in positive homological degrees. 
This can be proved as follows. 
We consider the convergent spectral sequence of change of base $E^{2}_{p,q} = \Tor_{p}^{B^{s}}(A,H_{q}(K(B^{s})_{\bullet} )) \Rightarrow H_{p+q}(A \otimes_{B^{s}} K(B^{s})_{\bullet})$ 
(see \cite{W}, Application 5.7.8, where we have used the fact that $K(B^{s})_{\bullet}$ is a bounded below complex of free modules). 
The exactness of the Koszul complex of $B^{s}$ and the freeness of the $B^{s}$-module $A$ imply that $E^{2}_{p,q} = 0$ if $(p,q) \neq (0,0)$. 
In consequence, $H_{n}(A \otimes_{B^{s}} K(B^{s})_{\bullet}) = 0$, for $n \geq 1$. 

Since the multi-Koszul complex $(K(A)_{\bullet},\delta_{\bullet})$ of the algebra $A$ can be decomposed as $K(A)_{\bullet} = \oplus_{s \in S} A \otimes_{B^{s}} K(B^{s})_{\bullet}$, 
for $\bullet \geq 1$, and $\delta_{\bullet} = \oplus_{s \in S} \delta_{\bullet}^{s}$ for $\bullet \geq 2$, the exactness of $A \otimes_{B^{s}} K(B^{s})_{\bullet}$ 
in positive homological degrees tells us that $K(A)_{\bullet}$ is acyclic in homological degrees greater than or equal to $2$. 
On the other hand, the exactness of the multi-Koszul complex in homological degree $1$ is automatically satisfied for a nonnegatively graded connected algebra. 
We have thus that $K(A)_{\bullet}$ is exact in positive homological degrees, so $A$ is multi-Koszul. 
\end{proof}

Let $A=T(V)/\cl{R_a}$ and $B=T(W)/\cl{R_b}$ be such that $A$ is $a$-Koszul, $B$ is $b$-Koszul, 
where we consider $V$ and $W$ to be subspaces of a fixed vector space $U$, and the minimality condition for $R_a \oplus R_b \subseteq T(U)$ is satisfied. 
Consider the $\{a,b\}$-homogeneous algebra $C=T(V\cup W)/\cl{R_a,R_b}$. 
One may wonder if the previous result could be weakened in order to obtain that $C$ is also multi-Koszul. 
As expected, the answer is no, as we may see in the following example.

%%%%%%%
\begin{example}
\label{ex:notcoprodcasi}
Consider the algebras $A=k\cl{x,z}/\cl{xz}$ and $B=k\cl{x,y}/\cl{y^2x}$. 
It is direct that $A$ is $2$-Koszul, $B$ is $3$-Koszul and that the minimality condition for the relation space $\mathrm{span}_{k} \cl{xz, y^{2}x} \subseteq k\cl{x,y,z}$ holds. 
The algebra $C=k\cl{x,y,z}/\cl{xz,y^2x}$ is not (left) multi-Koszul since $k \otimes_{C} \Ker(\delta_2) =\mathrm{span}_{k}\cl{y^2xz} \neq J_3^3$.

A similar example can be also obtained for $V = W$ in the previous notation. 
Consider the algebras $A=k\cl{x,y}/\cl{xy}$ and $B=k\cl{x,y}/\cl{y^{2}x}$, which are respectively $2$-Koszul and $3$-Koszul. 
The minimality condition for $\mathrm{span}_{k} \cl{xy, y^{2}x} \subseteq k\cl{x,y}$ is satisfied. 
The algebra $C=k\cl{x,y}/\cl{xy,y^2x}$ is however not left multi-Koszul, since $k \otimes_{C} \Ker(\delta_2) = \mathrm{span}_{k} \cl{xy^{2}x, y^2xy}$ but $J_3 = 0$.
\end{example}
%%%%%%%

Regarding some of the several different equivalent definitions of the (generalized) Koszul property for homogeneous algebras, one may wonder for instance 
if an $S$-multi-homogeneous algebra is left multi-Koszul if and only if the trivial $A$-module $k$ has a minimal projective resolution $(P_{\bullet},d_{\bullet})$ whose $i$-th projective $P_{i}$ is pure in degrees $\{n_{s}(i) : s \in S\}$, for all $i \in \NN_{0}$ 
(\textit{cf.} \cite{B2}, Prop. 2.13). 
Unfortunately, this proposal is different from the Definition \ref{def:multikoszul} (\textit{e.g.} Example \ref{ex:difkos}), and it leads to several undesirable situations. 
For instance, if one only asks that the $i$-th projective $P_{i}$ of the minimal projective resolution of the trivial $A$-module $k$ is pure in degrees $\{n_{s}(i) : s \in S\}$, for all $i \in \NN_{0}$,  
the differentials of such a minimal projective resolution may become rather complicated, since the relations may interact in complicated ways. 
Furthermore, this (alternative) definition would not be yield to a proper generalization of the (generalized) Koszul algebras, if we are willing to have such a definition 
independent of the index set $S$ in the sense of Remark \ref{rem:bdef} (see the algebra $B$ of the Example \ref{ex:loco}). 
Moreover, following the ideas of Example \ref{ex:loco}, we see that this alternative definition would also allow lots of bad behaved algebras containing ``non-Koszul components'', even in the generalized sense, and in particular, the Yoneda algebras obtained from this alternative definition will not be necessarily $\mathcal{K}_{2}$ (see the algebra $A$ of the Example \ref{ex:loco}). 
We next list these examples thus in order to understand better why Definition \ref{def:multikoszul} is in some sense reasonable to avoid the aforementioned phenomena, 
which we may regard as pathological.

%%%%%%%
\begin{example}
\label{ex:difkos}
Consider the algebra $A=k\cl{x,y,z}/\cl{xy,y^2z}$. 
One may easily compute the minimal projective resolution of the trivial $A$-module $k$ of the form
\[
0\longrightarrow A\otimes \cl{xy^2z} \overset{\tilde{\delta}_3}{\longrightarrow} A\otimes \cl{xy,y^2z} \overset{\delta_2}{\longrightarrow} A\otimes \cl{x,y,z} \overset{\delta_1}{\longrightarrow} A\overset{\delta_0}{\longrightarrow} k\longrightarrow 0,
\]
where the morphisms $\delta_i$ for $i=0, 1,2$ are defined as above, and $\tilde{\delta}_{3}(\alpha \otimes xy^{2}z) = \alpha x \otimes y^{2}z$. 
Note that the $i$-th projective module is pure in degrees $n_2(i)$ and $n_3(i)$, for all $i \in \NN_0$. 
However, the algebra is not left multi-Koszul, for the condition $\Ker(\delta_2)/(A_{> 0} \cdot \Ker(\delta_2)) = J_3^2\oplus J_3^3$ fails.
\end{example}
%%%%%%%

\begin{example}
\label{ex:loco}
Consider the algebras $B=k\cl{x,y,z}/\cl{x^2y,z^2x}$ and $C=k\cl{u}/\cl{u^4}$. 
It is easy to check that $C$ is $4$-Koszul whereas $B$ is not $3$-Koszul (for instance, because $z^2x^2y$ is a minimal generator of degree $5$ of the kernel of the second differential). 
Let $A=B*_k C$ be the free product of $B$ and $C$. 
There exists a minimal graded projective resolution
\[
\dots \rightarrow A\otimes \cl{u^{n_4(i)}} \rightarrow \cdots \rightarrow A\otimes \cl{u^8} \rightarrow A\otimes W \rightarrow A\otimes R\rightarrow A\otimes V\rightarrow A\rightarrow k\rightarrow 0,
\]
where $V = \mathrm{span}_{k}\cl{x,y,z,u}$, $R = \mathrm{span}_{k}\cl{x^2y, z^2x, u^4}$, 
$W = \mathrm{span}_{k} \cl{z^2x^2y,u^5}$ and the differentials are the obvious ones. 
This implies that the $i$-th projective module is pure in degrees $n_3(i)$ and $n_4(i)$, 
for all $i \in \NN_0$. 
However, $k\otimes_A \Ker(\delta_2) \neq J_3^3\oplus J_3^4$, so $A$ is not left multi-Koszul. 

We note incidentally that, if the algebra $B$ is regarded as an $S$-multi-homogeneous algebra for $S = \{3,4\}$ (by considering $R_{4} = 0$), it satisfies that 
the trivial left $A$-module $k$ has a minimal projective resolution whose $i$-th projective is pure in degrees $n_3(i)$ and $n_4(i)$, for all $i \in \NN_0$. 
\end{example}
%%%%%%%

Before proceeding further we want to make several comments on the left multi-Koszul complex of an $S$-multi-homogeneous algebra $A$. 
The obvious statements for the right multi-Koszul complex trivially hold. 
First, given $i \in \NN_{0}$, note that the map of graded vector spaces $J_{i+1} \rightarrow \Ker(\delta_{i})$ given by the restriction of $\delta_{i+1}$ is injective. 
This can be proved as follows. 
The cases $i =0, 1$ are immediate, so we will suppose that $i \geq 2$.  
In that case, the previous map is the direct sum of the maps $J_{i+1}^{s} \rightarrow \Ker(\delta_{i}) \cap (A \otimes J_{i}^{s})$, for $s \in S$, so the kernel 
of the former is the direct sum of the kernel of the previous maps for each $s \in S$.
Furthermore, the kernel of the restriction of $\delta_{i+1}$ to $J_{i+1}^{s}$ is easily seen to be $J_{i+1}^{s} \cap (I_{n_{s}(i+1)-n_{s}(i)} \otimes J_{i}^{s})$. 
The first term of this intersection is included in $R_{s} \otimes V^{(n_{s}(i-1))}$, whereas the second is included in 
\[     \big(\sum_{s'<s} \sum_{j=0}^{s-s'} V^{(j)} \otimes R_{s'} \otimes V^{(s-s'-j)}\big) \otimes V^{(n_{s}(i-1))},     \]
where we have used that $n_{s}(i+1)-n_{s}(i) < s$. 
The intersection of the last two vanishes by the minimality condition \eqref{eq:minimality}, so \textit{a fortiori} the kernel of the restriction of $\delta_{i+1}$ to $J_{i+1}^{s}$ vanishes, 
which in turn implies the injectivity of the mentioned morphism $J_{i+1} \rightarrow \Ker(\delta_{i})$. 

For each $i \in \NN_{0}$, let us now consider the map $J_{i+1} \rightarrow k \otimes_{A} \Ker(\delta_{i})$ given by the composition of $J_{i+1} \rightarrow \Ker(\delta_{i})$ 
and the canonical projection $\Ker(\delta_{i}) \rightarrow k \otimes_{A} \Ker(\delta_{i})$. 
We claim that this composition is in fact injective if $i$ is even. 
This can be proved as follows. 
By the comments at the end of Section \ref{sec:preliminaries}, we know that the mentioned map is in fact an isomorphism for $i = 0$ (and also for $i = 1$). 
We shall suppose thus that $i \geq 2$. 
As before, the mentioned map can be decomposed as the direct sum of the corresponding maps of the form $J_{i+1}^{s} \rightarrow k \otimes_{A} (\Ker(\delta_{i}) \cap (A \otimes J_{i}^{s}))$, 
for $s \in S$. 
Hence, it suffices to prove the injectivity of each of these components. 
Since $i$ is even, the image of the map $J_{i+1}^{s} \rightarrow \Ker(\delta_{i}) \cap (A \otimes J_{i}^{s})$ is contained in $V \otimes J_{i}^{s}$, 
so one sees that the kernel of $J_{i+1}^{s} \rightarrow k \otimes_{A} (\Ker(\delta_{i}) \cap (A \otimes J_{i}^{s}))$ 
vanishes if and only if $J_{i}^{s} \cap \Ker(\delta_{i}) = 0$, which follows from the previous paragraph. 

On the other hand, if $i \geq 3$ is odd, we may state the following fact about the map $J_{i+1} \rightarrow k \otimes_{A} \Ker(\delta_{i})$. 
We know that it can be decomposed as the direct sum of the maps of the form 
$J_{i+1}^{s} \rightarrow k \otimes_{A} (\Ker(\delta_{i}) \cap (A \otimes J_{i}^{s}))$, for $s \in S$. 
It suffices thus to analyse each of these components separately. 
For $i$ odd, the image of the map $J_{i+1}^{s} \rightarrow \Ker(\delta_{i}) \cap (A \otimes J_{i}^{s})$ is now contained in $V^{(s-1)} \otimes J_{i}^{s}$,
so one sees that the kernel of $J_{i+1}^{s} \rightarrow k \otimes_{A} (\Ker(\delta_{i}) \cap (A \otimes J_{i}^{s}))$ 
is nontrivial if and only if there exists $t \leq s - 2$ such that $(V^{(t)} \otimes J_{i}^{s}) \cap \Ker(\delta_{i}) \neq 0$, which is equivalent to say that the graded vector space 
$k \otimes_{A} (\Ker(\delta_{i}) \cap (A \otimes J_{i}^{s}))$ has nontrivial homogeneous components of degree strictly less than $n_{s}(i+1)$. 

This proves the following result.
%%%%%%%
\begin{lemma}
\label{lemma:tresimp}
Let $A$ be an $S$-multi-homogeneous algebra, with space of relations $R = \oplus_{s \in S} R_s$, $R_s \subseteq V^{(s)}$, and $S$ a finite subset of $\NN_{\ge 2}$, 
and let $(K(A)_{\bullet},\delta_{\bullet})$ be its left multi-Koszul complex. 
Given $i \in \NN_{0}$, the map of graded vector spaces $J_{i+1} \rightarrow \Ker(\delta_{i})$ given by the restriction of $\delta_{i+1}$ is injective. 
Consider now the map of graded vector space given by the composition of the previous morphism and the canonical projection $\Ker(\delta_{i}) \rightarrow k \otimes_{A} \Ker(\delta_{i})$. 
If $i$ is even or $i = 1$, it is injective, and if $i$ is odd and $i \geq 3$, it is injective if and only if the graded vector space $k \otimes_{A} (\Ker(\delta_{i}) \cap (A \otimes J_{i}^{s}))$ 
has no nontrivial homogeneous components of degree strictly less than $n_{s}(i+1)$, for all $s \in S$.
\end{lemma}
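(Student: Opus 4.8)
The plan is to prove the three assertions one after another, in each case reducing to the direct-sum decomposition $J_{\bullet}=\bigoplus_{s\in S}J_{\bullet}^{s}$ together with the minimality condition \eqref{eq:minimality}; this is precisely the chain of observations made in the paragraphs preceding the statement, which I would simply assemble.

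First I would treat the injectivity of $J_{i+1}\to\Ker(\delta_{i})$. The cases $i=0,1$ are checked by hand (there the map is even an isomorphism onto $k\otimes_{A}\Ker(\delta_{i})$, by the discussion at the end of Section \ref{sec:preliminaries}), so assume $i\geq 2$. Since $\delta_{i+1}(A\otimes J_{i+1}^{s})\subseteq A\otimes J_{i}^{s}$ and the family $\{J_{i}^{s}\}_{s}$ is independent, in the sense that $J_{i}^{s}\cap\bigl(V^{(j)}\otimes J_{i'}^{s'}\otimes V^{(n_{s}(i)-n_{s'}(i')-j)}\bigr)=0$ for $s\neq s'$, the morphism $J_{i+1}\to\Ker(\delta_{i})$ is the direct sum of the maps $J_{i+1}^{s}\to\Ker(\delta_{i})\cap(A\otimes J_{i}^{s})$, so it suffices to see that the restriction of $\delta_{i+1}$ to $J_{i+1}^{s}$ is injective. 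Its kernel equals $J_{i+1}^{s}\cap\bigl(I_{n_{s}(i+1)-n_{s}(i)}\otimes J_{i}^{s}\bigr)$; bounding the first factor inside $R_{s}\otimes V^{(n_{s}(i-1))}$ and the second inside $\bigl(\sum_{s'<s}\sum_{j}V^{(j)}\otimes R_{s'}\otimes V^{(s-s'-j)}\bigr)\otimes V^{(n_{s}(i-1))}$, where one uses $n_{s}(i+1)-n_{s}(i)<s$, this intersection vanishes by \eqref{eq:minimality}.

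For the composite with the canonical projection $\Ker(\delta_{i})\to k\otimes_{A}\Ker(\delta_{i})$ I would again split over $s\in S$ and analyse each $J_{i+1}^{s}\to k\otimes_{A}\bigl(\Ker(\delta_{i})\cap(A\otimes J_{i}^{s})\bigr)$ separately. If $i=1$ the map is the isomorphism recalled above. If $i$ is even, $\delta_{i+1}$ strips off a single generator, so the image of $J_{i+1}^{s}$ lies in $V\otimes J_{i}^{s}$; unwinding the definition of $k\otimes_{A}(\place)$ one checks that the kernel of this component is nonzero if and only if $J_{i}^{s}\cap\Ker(\delta_{i})\neq 0$, which is excluded by the injectivity established in the first part (which gives $J_{m}^{s}\cap\Ker(\delta_{m})=0$ for every $m$), so the composite is injective. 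If $i\geq 3$ is odd, $\delta_{i+1}$ strips off $s-1$ generators, so the image of $J_{i+1}^{s}$ lies in $V^{(s-1)}\otimes J_{i}^{s}$ and has degree $n_{s}(i+1)$; an element of $J_{i+1}^{s}$ is killed in $k\otimes_{A}\bigl(\Ker(\delta_{i})\cap(A\otimes J_{i}^{s})\bigr)$ exactly when its image lies in the degree-$n_{s}(i+1)$ component of $A_{>0}\cdot\bigl(\Ker(\delta_{i})\cap(A\otimes J_{i}^{s})\bigr)$, and since the first part forces $\Ker(\delta_{i})\cap(A\otimes J_{i}^{s})$ to have no minimal generator in degree $n_{s}(i)$, such a collision can occur precisely when there is a minimal generator in some intermediate degree $n_{s}(i)<d<n_{s}(i+1)$, that is, exactly when $k\otimes_{A}\bigl(\Ker(\delta_{i})\cap(A\otimes J_{i}^{s})\bigr)$ has a nontrivial homogeneous component of degree strictly less than $n_{s}(i+1)$.

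The routine parts are the bookkeeping with the functions $n_{s}$ and the explicit form of $\hat{\delta}_{\bullet}$; the step I expect to be the crux is the odd case $i\geq 3$, where one must make precise the equivalence between injectivity of $J_{i+1}^{s}\to k\otimes_{A}(\Ker(\delta_{i})\cap(A\otimes J_{i}^{s}))$ and the absence of low-degree minimal generators of $\Ker(\delta_{i})\cap(A\otimes J_{i}^{s})$. This amounts to tracking, degree by degree, how $A_{>0}\cdot(\Ker(\delta_{i})\cap(A\otimes J_{i}^{s}))$ meets the image of $\delta_{i+1}$ in degree $n_{s}(i+1)$, and it is here that the inhomogeneity of the relations produces the extra hypothesis that does not appear in Berger's homogeneous setting.
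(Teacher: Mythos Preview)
Your proposal is correct and follows the paper's own argument essentially verbatim: you reduce to the components $J_{i+1}^{s}\to\Ker(\delta_i)\cap(A\otimes J_i^{s})$, compute the kernel as $J_{i+1}^{s}\cap(I_{n_s(i+1)-n_s(i)}\otimes J_i^{s})$ and kill it via the minimality condition, then handle the composite with the projection by noting that the image sits in $V\otimes J_i^{s}$ (even case) or $V^{(s-1)}\otimes J_i^{s}$ (odd case) and comparing with $A_{>0}\cdot(\Ker(\delta_i)\cap(A\otimes J_i^{s}))$ degree by degree; this is exactly the chain of observations the paper records in the paragraphs preceding the lemma. Your closing remark that the odd-case equivalence is the point requiring care is apt, and the paper is no more explicit there than you are.
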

%%%%%%%

The corresponding formulation of the lemma for the right multi-Koszul complex of $A$ is obvious, and we shall refer to the lemma whether we are considering the left or the right version. 
We may use the previous lemma in fact to prove the first main result of this section (\textit{cf.} \cite{BDW}, Prop. 3):
%%%%%%%
\begin{proposition}
\label{prop:tresimp}
Let $A$ be an $S$-multi-homogeneous algebra with space of relations $R = \oplus_{s \in S} R_s$, $R_s \subseteq V^{(s)}$, and $S$ a finite subset of $\NN_{\ge 2}$.
Then $A$ is left (resp., right) multi-Koszul if and only if there is an isomorphism of graded vector spaces 
$\Tor_i^A(k,k) \simeq J_{i}$, for all $i \in \NN_{0}$.
\end{proposition}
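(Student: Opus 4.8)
The plan is to recognize that the left multi-Koszul complex $(K(A)_\bullet,\delta_\bullet)$ is always a complex of graded-free $A$-modules agreeing with the minimal projective resolution of $k$ through homological degree $2$, and that its induced complex $k\otimes_A K(A)_\bullet$ has vanishing differential (by construction, since the differentials $\hat\delta_i$ carry $J_i$ into $A_{>0}\otimes J_{i-1}$). Consequently, if the complex is acyclic in positive homological degrees, then it \emph{is} a minimal projective resolution of $k$, and by the comments at the end of Section \ref{sec:preliminaries} one reads off $P_i\simeq A\otimes\Tor_i^A(k,k)$ together with $P_i=A\otimes J_i$, giving the graded vector space isomorphism $\Tor_i^A(k,k)\simeq J_i$. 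This settles the ``only if'' direction with essentially no extra work.

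For the converse, the strategy is an inductive comparison argument. Suppose $\Tor_i^A(k,k)\simeq J_i$ for all $i\in\NN_0$. One knows $k$ has \emph{some} minimal projective resolution $(P_\bullet,d_\bullet)$, and then $P_i\simeq A\otimes\Tor_i^A(k,k)\simeq A\otimes J_i$ as graded $A$-modules. The multi-Koszul complex $K(A)_\bullet$ therefore has, in each homological degree, a free module of the \emph{same} graded rank as $P_i$. I would now build a morphism of complexes $\varphi_\bullet\colon K(A)_\bullet\to P_\bullet$ over $\mathrm{id}_k$ by the usual lifting procedure (using that $K(A)_\bullet$ consists of projectives), and prove by induction on $i$ that $\varphi_i$ is an isomorphism. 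The base cases $i=0,1,2$ are clear since both complexes coincide with the minimal resolution there. For the inductive step I would use Lemma \ref{lemma:tresimp}: the injectivity of $J_{i+1}\to\Ker(\delta_i)$ is automatic, and one needs that this map, composed with $\Ker(\delta_i)\to k\otimes_A\Ker(\delta_i)$, is in fact an isomorphism — equivalently that $K(A)_{\bullet\le i}$ resolves $\Ker(\delta_{i-1})$ and $A\otimes J_{i+1}\to\Ker(\delta_i)$ is a projective cover. Counting graded dimensions via the assumed isomorphism $\Tor_{i+1}^A(k,k)\simeq J_{i+1}$ forces the map $J_{i+1}\to k\otimes_A\Ker(\delta_i)$ to be surjective (both sides have equal Hilbert series, once $K(A)_{\le i}$ is known exact), and Lemma \ref{lemma:tresimp} together with the dimension count upgrades the injective-on-the-nose part to the odd-degree case as well; then Nakayama (Lemma \ref{prop:trivialmod}) and Lemma \ref{lem:biyectiontensor} give exactness of $K(A)_{\le i+1}$ in degree $i$.

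The main obstacle I expect is precisely the odd-degree step flagged in Lemma \ref{lemma:tresimp}: there, injectivity of $J_{i+1}\to k\otimes_A\Ker(\delta_i)$ is \emph{not} formal, and one must rule out ``low-degree'' elements of $k\otimes_A(\Ker(\delta_i)\cap(A\otimes J_i^s))$ below $n_s(i+1)$. The way to do this is to observe that, granting inductively that $K(A)_{\le i}$ computes the first $i+1$ Tor groups, one has $k\otimes_A\Ker(\delta_i)\simeq\Tor_{i+1}^A(k,k)$ as graded vector spaces (via the long exact sequence / dimension shifting along the resolution built so far), and the hypothesis identifies the latter with $J_{i+1}$, which is concentrated exactly in the degrees $\{n_s(i+1):s\in S\}$. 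Hence there are no components in degree $<n_s(i+1)$, the criterion of Lemma \ref{lemma:tresimp} is met, the comparison map $\varphi_{i+1}$ is an isomorphism between free modules of equal rank sitting over an isomorphism in degree $i+1$, and one concludes that $K(A)_\bullet$ is acyclic in positive degrees, i.e.\ $A$ is left multi-Koszul. The right-handed statement follows by the symmetric argument, or by applying the left version to $A^{\mathrm{op}}$ (noting $J_i$ is insensitive to the opposite algebra at the level of graded dimension), using the isomorphism \eqref{eq:isotorext} to pass between $\Tor$ and $\mathcal{E}xt$ if desired.
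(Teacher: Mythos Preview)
Your approach is essentially the same as the paper's: the ``only if'' direction is immediate from minimality, and the ``if'' direction proceeds by inductively showing that $K(A)_{\le i}$ is the minimal resolution of $k$, using Lemma~\ref{lemma:tresimp} to verify that $\delta_{i+1}$ is a projective cover of $\Ker(\delta_i)$. The comparison map $\varphi_\bullet$ you introduce is harmless but unnecessary; the paper works directly with $\delta_{i+1}$.

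There is, however, a genuine gap in your handling of the odd-$i$ step. You correctly identify that the criterion of Lemma~\ref{lemma:tresimp} requires, for each $s\in S$, that the summand $k\otimes_A(\Ker(\delta_i)\cap(A\otimes J_i^s))$ have no nonzero component in degree strictly less than $n_s(i+1)$. But your argument for this is: ``$k\otimes_A\Ker(\delta_i)\simeq J_{i+1}$ is concentrated in the degrees $\{n_{s'}(i+1):s'\in S\}$, hence there are no components in degree $<n_s(i+1)$.'' This conflates the total space with its $s$-th summand. The total space certainly has components in degree $n_{s'}(i+1)$ for every $s'<s$, and nothing you have said rules out \emph{a priori} that some of that contribution sits in the $s$-th summand rather than the $s'$-th one.

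The fix, which the paper carries out, is an induction on $s\in S$ ordered increasingly. For the least element $s_1$, the criterion is automatic: $n_{s_1}(i+1)$ is the minimal degree occurring in $J_{i+1}$, so the $s_1$-summand (being a graded subspace of something isomorphic to $J_{i+1}$) has nothing below it. Lemma~\ref{lemma:tresimp} then gives the injection $J_{i+1}^{s_1}\hookrightarrow k\otimes_A\Ker(\delta_i^{s_1})$, and comparing dimensions in degree $n_{s_1}(i+1)$ forces equality there; hence the remaining summands $\bigoplus_{j>1}k\otimes_A\Ker(\delta_i^{s_j})$ vanish in degree $n_{s_1}(i+1)$. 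Iterating, one peels off $J_{i+1}^{s_2}$, and so on. Only after this component-by-component argument is the criterion of Lemma~\ref{lemma:tresimp} verified for every $s$, and then your finite-dimensionality argument (injective graded endomorphism of $J_{i+1}$, hence isomorphism) goes through. Your Hilbert-series intuition is correct, but it must be deployed one summand at a time.
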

%%%%%%%
\begin{proof}
We shall prove the statement for the left multi-Koszul property, since the right one is analogous. 
Moreover, we will only show the ``if'' part, since the converse follows immediately from the minimality of the (left) multi-Koszul complex. 

Assume the existence of the isomorphism of graded vector spaces in the statement. 
We will prove that the left multi-Koszul complex is in fact a minimal projective resolution of the trivial left $A$-module $k$. 
In fact, we will show that $K(A)_{\bullet}$ is a minimal projective resolution of $k$ up to homological degree $i$, for all $i \in \NN$. 
Since the former coincides with such a minimal projective resolution up to homological degree $2$, we suppose that the statement is true for $i \geq 2$. 
By the comments on the construction of projective covers in Section \ref{sec:preliminaries} and the assumption $\Tor_{i+1}^A(k,k) \simeq J_{i+1}$, 
there is an essential surjective morphism of graded $A$-modules $h_{i} : A \otimes J_{i+1} \rightarrow \Ker(\delta_{i})$ inducing an isomorphism $J_{i+1} \rightarrow k \otimes_{A} \Ker(\delta_{i})$. 
We will consider two cases. 
If $i = 2$, the previous lemma tells us that the composition
\[
J_{3} \hookrightarrow \Ker(\delta_2)\twoheadrightarrow k \otimes_{A} \Ker(\delta_2),
\]
where the first map is the restriction of $\delta_{3}$, is injective.
Hence, the composition of this map with the inverse of $1_{k} \otimes_{A} h_{2}$ is an injective endomorphism of graded vector spaces of $J_{3}$, 
so an isomorphism, since the latter has finite dimension. 
This in turn implies that $\delta_{3}$ is in fact a projective cover of $\Ker(\delta_{2})$ by Lemma \ref{lem:biyectiontensor}. 

We now assume that $i \geq 3$. 
In this case, since the morphism $\delta_{i}$ is a direct sum of its components $A \otimes J^{s}_{i} \rightarrow A \otimes J^{s}_{i-1}$, for each $s \in S$, 
we see that the projective cover of $\Ker(\delta_{i})$ is the direct sum of the projective covers of each component $A \otimes J^{s}_{i} \rightarrow A \otimes J^{s}_{i-1}$, for $s \in S$, 
\textit{i.e.} it is the direct sum of the graded free $A$-modules $A \otimes k \otimes_{A} (\Ker(\delta_{i}) \cap (A \otimes J^{s}_{i}))$, for $s \in S$. 
By induction on $s$ we easily see that, for all $s \in S$, the graded vector space $k \otimes_{A} (\Ker(\delta_{i}) \cap (A \otimes J_{i}^{s}))$ 
has no nontrivial homogeneous components of degree strictly less than $n_{s}(i+1)$ and contains $J_{i+1}^{s}$, 
so by the assumption on the Tor group it coincides with $J_{i+1}^{s}$. 
We may now proceed as in the case $i = 2$, since by the previous lemma the composition
\[
J_{i+1} \hookrightarrow \Ker(\delta_i) \twoheadrightarrow k \otimes_{A} \Ker(\delta_i),
\]
where the first map is the restriction of $\delta_{i+1}$, is injective.
Hence, the composition of this map with the inverse of $1_{k} \otimes_{A} h_{i}$ is an injective endomorphism of graded vector spaces of $J_{i+1}$, 
so an isomorphism, since the latter has finite dimension. 
This in turn implies that $\delta_{i+1}$ is in fact a projective cover of $\Ker(\delta_{i})$. 
The proposition is thus proved. 
\end{proof}

We have thus the following direct consequence.
%%%%%%%
\begin{corollary}
\label{prop:left=right}
Let $A$ be an $S$-multi-homogeneous algebra with space of relations $R = \oplus_{s \in S} R_s$, $R_s \subseteq V^{(s)}$, and $S$ a finite subset of $\NN_{\ge 2}$.
Then, $A$ is left $S$-multi-Koszul if and only if it is right $S$-multi-Koszul.
\end{corollary}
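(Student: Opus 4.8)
The plan is to derive Corollary \ref{prop:left=right} as an immediate consequence of Proposition \ref{prop:tresimp} together with the well-known left-right symmetry of the graded Tor groups of $k$ over $A$. The key observation is that Proposition \ref{prop:tresimp} characterizes the left $S$-multi-Koszul property purely in terms of the graded vector spaces $\Tor_i^A(k,k)$, and the right version gives an \emph{identical} characterization of the right $S$-multi-Koszul property in terms of the same groups $\Tor_i^A(k,k)$ (here we use that $\Tor_i^A(k,k)$ computed via a resolution of $k$ as a left module agrees, as a graded vector space, with the one computed via a resolution of $k$ as a right module; this is the standard balancing of the Tor bifunctor, which in the graded setting preserves the grading).

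Concretely, first I would note that, by Proposition \ref{prop:tresimp}, $A$ is left $S$-multi-Koszul if and only if there is an isomorphism of graded vector spaces $\Tor_i^A(k,k) \simeq J_i$ for all $i \in \NN_0$. Next, by the version of Proposition \ref{prop:tresimp} for the right multi-Koszul complex (whose proof, as remarked after Lemma \ref{lemma:tresimp}, is completely analogous and uses the same graded vector spaces $J_i$, since the definition of $J_i$ and $J_i^s$ is left-right symmetric), $A$ is right $S$-multi-Koszul if and only if there is an isomorphism of graded vector spaces $\Tor_i^A(k,k) \simeq J_i$ for all $i \in \NN_0$. Since the right-hand conditions of these two equivalences are literally the same statement, the two multi-Koszul properties are equivalent. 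One could equally invoke the isomorphism \eqref{eq:isotorext} together with the analogous identity for right modules, or simply the graded duality $M \mapsto M^{\#}$ interchanging the left and right multi-Koszul complexes, to phrase the symmetry, but the cleanest route is via Proposition \ref{prop:tresimp}.

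I do not expect any serious obstacle here: the corollary is genuinely a formal consequence and the substantive work has already been done in Proposition \ref{prop:tresimp}. The only point requiring a line of care is making explicit that $\Tor_i^A(k,k)$ does not depend on whether one resolves $k$ on the left or on the right, and that this independence is compatible with the internal grading — but this is standard and can be cited (e.g. from the references used in Section \ref{sec:preliminaries}, or deduced from \eqref{eq:isotorext}). So the proof is essentially the one-line argument: combine Proposition \ref{prop:tresimp} with its right-handed counterpart and the symmetry of Tor.
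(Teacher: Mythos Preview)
Your proposal is correct and matches the paper's approach exactly: the paper also presents Corollary \ref{prop:left=right} as a direct consequence of Proposition \ref{prop:tresimp}, which already states both the left and right versions with the identical Tor condition $\Tor_i^A(k,k)\simeq J_i$. Your remark about the left-right balancing of $\Tor$ is the only point that needs to be made explicit, and it is standard.
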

%%%%%%%
By the previous result, we shall usually say that an algebra $A$ is left $S$-multi-Koszul, right $S$-multi-Koszul, $S$-multi-Koszul or simply \emph{multi-Koszul} indiscriminately.

We may restate the previous results in a slightly different manner. 
Consider the $k$-linear endomorphism of $T(V)$ given by
\begin{itemize}
\item $\tau (1)=1$,
\item $\tau(w_1\otimes w_2\otimes \cdots \otimes w_n)=w_n\otimes \cdots \otimes w_2\otimes w_1$,
\end{itemize}
for $w_1,\cdots ,w_n\in V$ and $n \geq 1$, which is an anti-isomorphism of algebras, so it induces an algebra anti-isomorphism 
\[
\bar{\tau}: A \longrightarrow \frac{T(V)}{\cl{\tau (R)}}=A^{\circ}. 
\]
In other words, it induces an isomorphism between the (usual) opposite algebra $A^{\mathrm{op}}$ of $A$ and $A^{\circ} = T(V)/\cl{\tau(R)}$. 

If the relation space $R$ satisfies the minimality condition, then the relation space $\tau(R)$ also satisfies it. 
We shall say that $A^{\circ}$ is the \emph{opposite $S$-multi-homogeneous algebra} of $A$.
%%%%%%%
\begin{corollary}
\label{prop:koszulitydual}
The algebra $A^{\circ}$ is $S$-multi-Koszul if and only if $A$ is $S$-multi-Koszul.
\end{corollary}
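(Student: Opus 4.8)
The plan is to deduce the statement from the left-right symmetry of the multi-Koszul property (Corollary \ref{prop:left=right}) by transporting the left multi-Koszul complex of $A^{\circ}$ to the right multi-Koszul complex of $A$ along the anti-isomorphism $\bar{\tau}$. First I would observe that, since $\bar{\tau}\colon A\to A^{\circ}$ reverses products, it exhibits $A^{\circ}$ as the usual opposite algebra of $A$; consequently the category of $\mathbb{Z}$-graded left $A^{\circ}$-modules is equivalent to the category of $\mathbb{Z}$-graded right $A$-modules (a left $A^{\circ}$-module $M$ becomes a right $A$-module via $m\cdot a=\bar\tau(a)m$), and under this equivalence the trivial left $A^{\circ}$-module $k$ corresponds to the trivial right $A$-module $k$.

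Next I would check that $\tau$ carries the combinatorial data of $A^{\circ}$ onto that of $A$. Writing $\tau(R)=\oplus_{s\in S}\tau(R_{s})$ with $\tau(R_{s})\subseteq V^{(s)}$ --- a legitimate relation space satisfying the minimality condition, as noted before the statement --- a direct computation using the identity $\tau\bigl(V^{(j)}\otimes\tau(R_{s})\otimes V^{(n_{s}(i)-s-j)}\bigr)=V^{(n_{s}(i)-s-j)}\otimes R_{s}\otimes V^{(j)}$ together with the fact that the bijection $\tau$ commutes with intersections shows that $\tau$ restricts to a linear isomorphism between the space $J_{i}^{s}$ attached to $A^{\circ}$ and the space $J_{i}^{s}$ attached to $A$, for every $s\in S$ and $i\ge 2$; hence $\tau$ induces an isomorphism $J_{i}(A^{\circ})\cong J_{i}(A)$ compatible with the inclusions into $\oplus_{s\in S}V^{(n_{s}(i))}$, and with $J_{0}$, $J_{1}$ as well.

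Then I would define, for each $i$, the linear map $A^{\circ}\otimes J_{i}(A^{\circ})\to J_{i}(A)\otimes A$ by $\alpha\otimes w\mapsto\tau(w)\otimes\bar\tau^{-1}(\alpha)$, and verify that these maps are isomorphisms of graded right $A$-modules (for the transported structure on the source) which intertwine the differential $\hat\delta_{i}$ of Definition \ref{def:multikoszul} applied to $A^{\circ}$ with the differential $\hat\delta'_{i}$ of Remark \ref{rem:rightmultikoszul} applied to $A$: this works out in both parities of $i$ precisely because $\tau$ reverses the order of the tensor factors and $\bar\tau$ reverses multiplication, so the left/right asymmetry in the two definitions is exactly absorbed. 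Thus the left multi-Koszul complex of $A^{\circ}$ is isomorphic, as an augmented complex, to the right multi-Koszul complex of $A$; since an isomorphism of complexes preserves acyclicity in positive homological degrees, $A^{\circ}$ is left multi-Koszul if and only if $A$ is right multi-Koszul, hence (by Corollary \ref{prop:left=right}, applied to both $A$ and $A^{\circ}$) if and only if $A$ is multi-Koszul.

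The step I expect to require the most care is the last one --- matching the explicit differentials $\hat\delta_{i}$ and $\hat\delta'_{i}$ across $\tau$ while keeping track of which side the module actions live on --- although it is entirely mechanical. Everything else is formal. One can also shorten the argument by invoking Proposition \ref{prop:tresimp} instead: the standard isomorphism $\Tor_{i}^{A^{\circ}}(k,k)\cong\Tor_{i}^{A^{\mathrm{op}}}(k,k)\cong\Tor_{i}^{A}(k,k)$ of graded vector spaces, combined with the isomorphism $J_{i}(A^{\circ})\cong J_{i}(A)$ obtained above, shows at once that the characterization ``$\Tor_{i}(k,k)\cong J_{i}$ for all $i\in\NN_{0}$'' holds for $A^{\circ}$ if and only if it holds for $A$.
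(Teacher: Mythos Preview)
Your proof is correct and follows essentially the same approach as the paper, which simply states that the result is an immediate consequence of Corollary~\ref{prop:left=right}. You have spelled out in detail precisely why this is immediate: the anti-isomorphism $\bar\tau$ identifies the left multi-Koszul complex of $A^{\circ}$ with the right multi-Koszul complex of $A$, so that left multi-Koszulity of $A^{\circ}$ is equivalent to right multi-Koszulity of $A$, and then Corollary~\ref{prop:left=right} closes the argument. Your alternative via Proposition~\ref{prop:tresimp} and the $\Tor$-invariance under passage to the opposite algebra is equally valid and arguably slicker.
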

%%%%%%%
\begin{proof}
It is an immediate consequence of Corollary \ref{prop:left=right}.
\end{proof}

We now have the following immediate consequence of Proposition \ref{prop:tresimp}  and the isomorphism \eqref{eq:isotorext} for the Yoneda algebra of a multi-homogeneous algebra.
%%%%%%%
\begin{proposition}
\label{prop:tresimpext}
For an $S$-multi-homogeneous algebra $A$ with space of relations $R = \oplus_{s \in S} R_s$, $R_s \subseteq V^{(s)}$, and $S$ a finite subset of $\NN_{\ge 2}$, 
the following statements are equivalent:
\begin{itemize}
\item[\textit{(i)}] $A$ is multi-Koszul,
\item[\textit{(i)}] $\mathcal{E}xt_A^i (k,k) \simeq J_{i}^{\#}$, for all $i \in \NN_{0}$.
\end{itemize}
\end{proposition}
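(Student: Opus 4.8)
The plan is to derive Proposition \ref{prop:tresimpext} as a purely formal consequence of Proposition \ref{prop:tresimp} together with the canonical isomorphism \eqref{eq:isotorext}. First I would note that an $S$-multi-homogeneous algebra $A$ is, by hypothesis, finitely generated with a finite-dimensional space of relations $R = \oplus_{s \in S} R_s$; hence each $J_i = \oplus_{s \in S} J_i^s$ is a finite-dimensional graded vector space, being an intersection of subspaces of the finite-dimensional space $V^{(n_s(i))}$. In particular $J_i^{\#} = J_i^{*}$ is again finite-dimensional, and the graded dual functor $(\place)^{\#}$ is an exact contravariant involution on the category of finite-dimensional graded vector spaces, so it turns isomorphisms into isomorphisms and $(J_i^{\#})^{\#} \simeq J_i$.

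Now I would prove the equivalence (i) $\Leftrightarrow$ (ii). For (i) $\Rightarrow$ (ii): if $A$ is multi-Koszul, Proposition \ref{prop:tresimp} gives an isomorphism of graded vector spaces $\Tor_i^A(k,k) \simeq J_i$ for all $i \in \NN_0$. Applying the exact contravariant functor $(\place)^{\#}$ yields $\Tor_i^A(k,k)^{\#} \simeq J_i^{\#}$, and combining this with the canonical isomorphism $\mathcal{E}xt_A^i(M,k) \simeq \Tor_i^A(k,M)^{\#}$ of \eqref{eq:isotorext} applied to $M = k$ (which is certainly a bounded below graded left $A$-module) gives $\mathcal{E}xt_A^i(k,k) \simeq J_i^{\#}$ for all $i$, as desired. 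For (ii) $\Rightarrow$ (i): conversely, assume $\mathcal{E}xt_A^i(k,k) \simeq J_i^{\#}$ for all $i$. By \eqref{eq:isotorext} this reads $\Tor_i^A(k,k)^{\#} \simeq J_i^{\#}$; since $J_i$ is finite-dimensional, $J_i^{\#}$ is finite-dimensional, hence so is $\Tor_i^A(k,k)^{\#}$ and therefore $\Tor_i^A(k,k)$. Dualizing once more via the involutivity of $(\place)^{\#}$ on finite-dimensional graded vector spaces gives $\Tor_i^A(k,k) \simeq (J_i^{\#})^{\#} \simeq J_i$, and Proposition \ref{prop:tresimp} then lets us conclude that $A$ is multi-Koszul.

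There is essentially no obstacle here: the whole content of the statement is already carried by Proposition \ref{prop:tresimp} and equation \eqref{eq:isotorext}, and the only thing to be careful about is the finite-dimensionality of the $J_i$, which is what licenses the free passage back and forth under the graded dual. (Without finite-dimensionality one would only get a one-way implication, or would need to phrase the dualization with more care.) For this reason the proof reduces to the single sentence ``It is an immediate consequence of Proposition \ref{prop:tresimp} and the isomorphism \eqref{eq:isotorext},'' possibly with a parenthetical remark recalling that each $J_i$ is finite-dimensional so that the graded dual is an exact involution on the relevant subcategory.
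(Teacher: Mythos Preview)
Your proposal is correct and follows exactly the paper's approach: the paper states the proposition as ``an immediate consequence of Proposition \ref{prop:tresimp} and the isomorphism \eqref{eq:isotorext}'' without further argument, and you have simply spelled out this deduction carefully, including the finite-dimensionality check needed to dualize back for the implication (ii) $\Rightarrow$ (i).
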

%%%%%%%

We note that the multi-Koszul resolution for a multi-Koszul algebra $A$ is composed of finitely generated projective $A$-modules, for each vector space $J_{i}$ is finite dimensional, 
so, by the comments at the end of Section \ref{sec:preliminaries}, there is a canonical identification $\mathcal{E}xt_A^{\bullet} (k,k) \simeq \Ext_A^{\bullet} (k,k)$. 

We want now to relate the multi-Koszul property of an multi-homogeneous algebras with the generalized Koszul property of some associated homogeneous algebras. 
We will need the following auxiliary result in the sequel. 
%%%%%%%
\begin{lemma}
\label{lemma:nakacons}
Let $A$ be a nonnegatively graded connected algebra, $I \subseteq A_{> 0}$ be a homogeneous ideal, and  
\[     
L \overset{g}{\rightarrow} M \overset{f}{\rightarrow} N     
\]
be a sequence of graded (left) $A$-modules satisfying that $f \circ g = 0$. 
If the sequence
\[     
(A/I) \otimes_{A} L \overset{1_{A/I} \otimes g}{\longrightarrow} (A/I) \otimes_{A} M \overset{1_{A/I} \otimes f}{\longrightarrow} (A/I) \otimes_{A} N     
\]
is exact, then the former sequence is also exact.
\end{lemma}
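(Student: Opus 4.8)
The plan is to prove that the homology module $H := \Ker(f)/\im(g)$ of the given sequence vanishes, which is exactly the desired exactness at $M$. Since $H$ is a subquotient of $M$ it is a bounded below graded $A$-module, so by the graded Nakayama Lemma \ref{prop:trivialmod} it suffices to show $k \otimes_A H = 0$; and for that it is enough to prove the stronger statement $(A/I) \otimes_A H = 0$, because $(A/I) \otimes_A H = H/IH$, so this forces $H = IH \subseteq A_{>0}H$ and hence $k \otimes_A H = H/(A_{>0}H) = 0$.

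To get a grip on $(A/I) \otimes_A H$ I would first unwind the hypothesis. Write $Z = \Ker(f)$ and $B = \im(g) \subseteq Z$. Applying the right exact functor $(A/I) \otimes_A (\place)$ to the canonical factorization $L \twoheadrightarrow B \hookrightarrow M$ of $g$ shows that $\im(1_{A/I} \otimes g)$ equals the image of $(A/I) \otimes_A B$ in $(A/I) \otimes_A M$; and since $f$ kills $Z$, the image of $(A/I) \otimes_A Z$ in $(A/I) \otimes_A M$ is contained in $\Ker(1_{A/I} \otimes f)$. As $B \subseteq Z$, the hypothesis $\im(1_{A/I} \otimes g) = \Ker(1_{A/I} \otimes f)$ forces the images of $(A/I) \otimes_A B$ and of $(A/I) \otimes_A Z$ in $(A/I) \otimes_A M$ to coincide. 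Applying $(A/I) \otimes_A (\place)$ to $B \hookrightarrow Z \twoheadrightarrow Z/B = H$ identifies $(A/I) \otimes_A H$ with the cokernel of $(A/I) \otimes_A B \to (A/I) \otimes_A Z$, and the previous sentence then shows that this cokernel is a quotient of the kernel of $(A/I) \otimes_A Z \to (A/I) \otimes_A M$; feeding the short exact sequence $0 \to Z \to M \to M/Z \to 0$ (with $M/Z \simeq \im(f)$) into the long exact sequence for $\Tor^A_\bullet(A/I, \place)$, this kernel is in turn a quotient of $\Tor^A_1(A/I, \im(f))$.

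So the crux, which I expect to be the main obstacle, is to show that this last $\Tor$ contribution really vanishes — equivalently, that $(A/I) \otimes_A H$ does coincide with $H_1$ of the reduced complex, despite $(A/I) \otimes_A (\place)$ being only right exact. I would settle this with the base change hyperhomology spectral sequence
\[
E^2_{p,q} = \Tor^A_p\bigl(A/I, H_q(C)\bigr) \ \Longrightarrow\ H_{p+q}\bigl((A/I) \otimes_A C\bigr),
\]
applied to the complex $C \colon L \overset{g}{\to} M \overset{f}{\to} N$ placed in homological degrees $2,1,0$, in the same spirit as the change of base spectral sequence used in the proof of Proposition \ref{prop:libre}; here one uses that in the situations where the lemma is invoked $C$ is a bounded below complex of graded-free $A$-modules (so the abutment is indeed the homology of the reduced complex $(A/I)\otimes_A C$), and typically the exactness of $C$ at the outer terms is already available, so that $H_q(C) = 0$ for $q \neq 1$ and the spectral sequence degenerates to give $H_n\bigl((A/I) \otimes_A C\bigr) \simeq \Tor^A_{n-1}(A/I, H)$; in particular $0 = H_1\bigl((A/I) \otimes_A C\bigr) \simeq (A/I) \otimes_A H$. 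Once $(A/I) \otimes_A H = 0$ is in hand, the Nakayama reduction of the first paragraph finishes the argument.
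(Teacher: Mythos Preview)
Your proposal does not establish the lemma in the stated generality: you smuggle in two hypotheses that the statement does not make. First, in order to apply Nakayama to $H = \Ker(f)/\im(g)$ you assume that $M$, and hence $H$, is bounded below; the lemma places no such restriction on $M$. Second, and more seriously, your third paragraph explicitly requires $L$, $M$, $N$ to be graded-free and the complex $C$ to be exact at its outer terms before the spectral sequence collapses to what you want. Without those extra hypotheses your argument only shows (as you yourself found in paragraph two) that $(A/I) \otimes_A H$ is a quotient of $\Tor_1^A(A/I, \im f)$, and nothing in the statement forces this to vanish. So what you have written is a proof of a special case, not of the lemma as stated, and you essentially concede as much.

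The paper's proof is entirely different and avoids all of this machinery. It is an element-by-element argument: given a homogeneous $m \in \Ker(f)$, exactness of the tensored sequence produces a homogeneous $l \in L$ with $1 \otimes (m - g(l)) = 0$ in $(A/I) \otimes_A M$; one then forms the \emph{cyclic} submodule $M' = A\cdot(m - g(l))$, observes that $M'$ is bounded below simply because its single generator is homogeneous (so no boundedness assumption on $M$ is needed), argues that $(A/I) \otimes_A M'$ vanishes, deduces $k \otimes_A M' = 0$ via the factorisation $k \simeq (A/A_{>0}) \otimes_{A/I} (A/I)$, and kills $M'$ by Nakayama, giving $m = g(l)$. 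The key manoeuvre --- localising the Nakayama step to a cyclic submodule rather than to the global homology $H$ --- is precisely what sidesteps both the bounded-below issue and the $\Tor$ obstruction that forced you to add hypotheses, and it uses no freeness, no exactness at outer terms, and no spectral sequences.
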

%%%%%%%
\begin{proof}
We only need to prove that $\Ker(f) \subseteq \im(g)$, since the other inclusion follows from $f \circ g = 0$. 
Moreover, it is also clear that it suffices to show that each homogeneous component $\Ker(f)$ is included in $\im(g)$, for the morphisms are homogeneous of degree zero. 
Let $m \in M$ be a homogeneous element such that $f(m) = 0$. 
This implies that $(1_{A/I} \otimes f)(1 \otimes m) = 0$. We shall prove that there exists $l \in L$ such that $g(l) = m$. 

The exactness of the second sequence tells us that there exists $l \in L$ (homogeneous and of the same degree as $m$) such that $(1_{A/I} \otimes g)(1 \otimes l) = 1 \otimes m$, 
\textit{i.e.} $1 \otimes (m - g(l)) = 0$. 
Let $M'$ be the (left) $A$-submodule of $M$ generated by $m-g(l)$. 
Notice that $M'$ is bounded below, for $m-g(l)$ is homogeneous. 
Hence, we get that $(A/I) \otimes_{A} M' = 0$, and, therefore, 
\[     
k \otimes_{A} M' = (A/A_{>0}) \otimes_{A} M' = (A/A_{>0}) \otimes_{A/I} (A/I) \otimes_{A} M' = 0.     
\]
By the Nakayama Lemma it must be $M' = 0$, so $m = g(l)$. The lemma is thus proved. 
\end{proof}

We shall now state the second main result of this section. 
As usual, $r$-$\pd_{A}(M)$ will denote the projective dimension of the right $A$-module $M$.
%%%%%%%
\begin{theorem}
\label{thm:koszulbypeaces}
Let $A = T(V)/\cl{R}$ be an $S$-multi-homogeneous algebra with space of relations $R = \oplus_{s \in S} R_s$, $R_s \subseteq V^{(s)}$, and $S$ a finite subset of $\NN_{\ge 2}$.
We denote by $A^{s} = T(V)/\cl{R_{s}}$ the associated $s$-th homogeneous algebra.
The following conditions are equivalent:
\begin{itemize}
\item[\textit{(i)}] $A$ is multi-Koszul.
\item[\textit{(ii)}] For each $s \in S$, we have that $A^{s}$ is $s$-Koszul, $r$-$\pd_{A^{s}}(A) \leq 1$, and 
$\Ker(\delta_{2}) = \oplus_{s \in S} (\Ker(\delta_{2}) \cap (A \otimes R_{s}))$, where $\delta_{2} : A \otimes R \rightarrow A \otimes V$ is the second differential of the left 
multi-Koszul complex of $A$. 
\end{itemize}
\end{theorem}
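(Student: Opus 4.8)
The plan is to exploit the direct-sum decomposition of the left multi-Koszul complex of $A$ in homological degrees $\geq 2$, namely $K(A)_{i} = \bigoplus_{s \in S} A \otimes J_{i}^{s}$ with $\delta_{i}$ the direct sum of its components $\delta_{i}^{s}$ for $i \geq 3$, and to recognise the $s$-th summand, completed below by the common tail $A \otimes V \to A \to k$, as the complex $L^{s}_{\bullet} := A \otimes_{A^{s}} K(A^{s})_{\bullet}$, where $K(A^{s})_{\bullet}$ is the (generalized) Koszul complex of the homogeneous algebra $A^{s} = T(V)/\langle R_{s}\rangle$ and $A$ is regarded as a right $A^{s}$-module through the canonical surjection $A^{s} \twoheadrightarrow A$, whose kernel I denote by $\tilde{I}_{s}$ (the image in $A^{s}$ of the ideal generated by $\{R_{s'} : s' \neq s\}$). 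Using the explicit formulas for the differentials one checks that $A \otimes_{A^{s}} (A^{s} \otimes J_{i}^{s}) = A \otimes J_{i}^{s}$ compatibly with the maps $\delta_{i}$. The key preliminary remark is that the third condition in \textit{(ii)}, that $\Ker(\delta_{2})$ splits as $\bigoplus_{s \in S} (\Ker(\delta_{2}) \cap (A \otimes R_{s}))$, is exactly what makes the homology of $K(A)_{\bullet}$ decompose degreewise: together with the decomposition of the differentials in degrees $\geq 3$ it yields $H_{i}(K(A)_{\bullet}) \simeq \bigoplus_{s \in S} H_{i}(L^{s}_{\bullet})$ for all $i \geq 2$ (for $i \geq 3$ this needs no extra hypothesis).

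For the implication \textit{(ii)} $\Rightarrow$ \textit{(i)} I would argue as follows. Since $A^{s}$ is $s$-Koszul, $K(A^{s})_{\bullet}$ is a resolution of $k$ by free $A^{s}$-modules, so the change-of-base spectral sequence $E^{2}_{p,q} = \Tor_{p}^{A^{s}}(A, H_{q}(K(A^{s})_{\bullet})) \Rightarrow H_{p+q}(L^{s}_{\bullet})$, used exactly as in the proof of Proposition \ref{prop:libre} (\textit{cf.} \cite{W}, Application 5.7.8), collapses to give $H_{n}(L^{s}_{\bullet}) \simeq \Tor_{n}^{A^{s}}(A, k)$ for every $n$. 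The hypothesis $r$-$\pd_{A^{s}}(A) \leq 1$ then forces $\Tor_{n}^{A^{s}}(A, k) = 0$ for $n \geq 2$, hence $H_{n}(L^{s}_{\bullet}) = 0$ for $n \geq 2$. By the preliminary remark, $H_{i}(K(A)_{\bullet}) = 0$ for $i \geq 2$; since $K(A)_{\bullet}$ coincides with the minimal projective resolution of $k$ through homological degree $2$ (so $H_{1}(K(A)_{\bullet}) = 0$ as well), $K(A)_{\bullet}$ is acyclic in positive homological degrees, i.e. $A$ is multi-Koszul.

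For the implication \textit{(i)} $\Rightarrow$ \textit{(ii)}, if $A$ is multi-Koszul then $\Ker(\delta_{2}) = \im(\delta_{3}) = \bigoplus_{s \in S} \im(\delta_{3}^{s})$, and since $\im(\delta_{3}^{s}) \subseteq A \otimes R_{s}$ this already gives the third condition; by the preliminary remark we then get $H_{n}(L^{s}_{\bullet}) = 0$ for every $s$ and every $n \geq 2$. It remains to deduce that each $A^{s}$ is $s$-Koszul, and this is the main obstacle, because once it is known $H_{n}(L^{s}_{\bullet}) \simeq \Tor_{n}^{A^{s}}(A, k)$ as above, so $\Tor_{n}^{A^{s}}(A, k) = 0$ for $n \geq 2$ and hence $r$-$\pd_{A^{s}}(A) \leq 1$. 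To prove the $s$-Koszulity of $A^{s}$ I would argue by contradiction: let $q_{0} \geq 2$ be minimal with $W_{q_{0}} := H_{q_{0}}(K(A^{s})_{\bullet}) \neq 0$, so $W_{q} = 0$ for $1 \leq q \leq q_{0}-1$. In the spectral sequence $E^{2}_{p,q} = \Tor_{p}^{A^{s}}(A, W_{q}) \Rightarrow H_{p+q}(L^{s}_{\bullet})$ the only possibly nonzero entries of total degree $\leq q_{0}$ then lie in the rows $q = 0$ and $q = q_{0}$, with edge term $E^{2}_{0,q_{0}} = A \otimes_{A^{s}} W_{q_{0}} = W_{q_{0}}/\tilde{I}_{s} W_{q_{0}}$. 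Because $\tilde{I}_{s} \subseteq A^{s}_{>0}$, this edge term surjects onto $k \otimes_{A^{s}} W_{q_{0}}$, which is nonzero by the Nakayama Lemma (Lemma \ref{prop:trivialmod}), so $E^{2}_{0,q_{0}} \neq 0$; but $H_{q_{0}}(L^{s}_{\bullet}) = 0$ forces $E^{\infty}_{0,q_{0}} = 0$, so it must be killed by a higher differential, and inspecting the pages shows the only one that can do so is $d^{\,q_{0}+1}$ emanating from $E^{2}_{q_{0}+1,0} = \Tor_{q_{0}+1}^{A^{s}}(A, k)$.

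The hard part is ruling this out. The same spectral sequence already yields $\Tor_{n}^{A^{s}}(A, k) \simeq H_{n}(L^{s}_{\bullet}) = 0$ for $2 \leq n \leq q_{0}-1$, and one must combine this with a careful bookkeeping of the internal grading together with the structure of $\tilde{I}_{s}$ and its minimal generators (whose lowest degree is $\min(S \setminus \{s\})$) to see that $\Tor_{q_{0}+1}^{A^{s}}(A, k)$ is too small, in the relevant internal degrees and as a module, to surject onto $A \otimes_{A^{s}} W_{q_{0}}$; this contradiction gives $W_{q_{0}} = 0$ and hence the $s$-Koszulity of $A^{s}$. This last step is exactly where the argument departs from the situation of Proposition \ref{prop:libre}, in which $A$ is genuinely free over each $B^{s}$ and the collapse is purely formal: here the $s$-Koszulity of $A^{s}$ and the projective dimension of $A$ over $A^{s}$ are entangled in the spectral sequence and can only be disentangled by the degree analysis sketched above.
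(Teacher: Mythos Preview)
Your setup and the implication \textit{(ii)} $\Rightarrow$ \textit{(i)} are essentially the paper's argument: the identification $L^{s}_{\bullet} \simeq A \otimes_{A^{s}} K(A^{s})_{\bullet}$, the change-of-base spectral sequence giving $H_{n}(L^{s}_{\bullet}) \simeq \Tor_{n}^{A^{s}}(A,k)$, and the use of the splitting hypothesis on $\Ker(\delta_{2})$ to glue the pieces in homological degree $2$ all match. Likewise, in the direction \textit{(i)} $\Rightarrow$ \textit{(ii)} you obtain the decomposition of $\Ker(\delta_{2})$ and the vanishing $H_{n}(L^{s}_{\bullet}) = 0$ for $n \geq 2$ exactly as in the paper.

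The genuine gap is in your deduction that $A^{s}$ is $s$-Koszul. The spectral-sequence argument you sketch leaves the crucial step (``$\Tor_{q_{0}+1}^{A^{s}}(A,k)$ is too small to surject onto $A \otimes_{A^{s}} W_{q_{0}}$'') as a vague degree-bookkeeping claim that is not proved and, as you yourself note, is where the whole difficulty lies. There is no reason a priori that internal-degree considerations force this differential to be nonsurjective; the minimal degree of $W_{q_{0}}$ and the minimal degree of the generators of $\tilde{I}_{s}$ do not obviously conspire in the way you need.

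The paper avoids this entirely via Lemma~\ref{lemma:nakacons}, a graded Nakayama-type lifting lemma: if $L \to M \to N$ is a zero composition of graded $B$-modules that becomes exact after applying $(B/I) \otimes_{B} (\place)$ for an ideal $I \subseteq B_{>0}$, then it was already exact. Here $B = A^{s}$, $I = \tilde{I}_{s}$, and $B/I = A$; since you have already shown $A \otimes_{A^{s}} K(A^{s})_{\bullet}$ is exact in homological degrees $\geq 2$, the lemma gives directly that $K(A^{s})_{\bullet}$ is exact in those degrees, hence $A^{s}$ is $s$-Koszul. Once this is in hand, your own spectral-sequence identification $H_{n}(L^{s}_{\bullet}) \simeq \Tor_{n}^{A^{s}}(A,k)$ then yields $r\text{-}\pd_{A^{s}}(A) \leq 1$ immediately, with no further analysis needed.
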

%%%%%%%
\begin{proof} 
We shall first prove the implication \textit{(i)} $\Rightarrow$ \textit{(ii)}. 
Assume that $A$ is multi-Koszul, \textit{i.e.} the multi-Koszul complex $K(A)_{\bullet}$ of $A$ is acyclic in positive homological degrees. 
Since 
\begin{equation}
\label{eq:ker}
\Ker(\delta_{2}) = \im(\delta_{3}) = \bigoplus_{s \in S} (\im(\delta_{3}) \cap (A \otimes R_{s})) = \bigoplus_{s \in S} (\Ker(\delta_{2}) \cap (A \otimes R_{s})),     
\end{equation}
we get the last condition of item \textit{(ii)}. 

We will now show that $A^{s}$ is $s$-Koszul and $r$-$\pd_{A^{s}}(A) \leq 1$, for each $s \in S$. Let $s \in S$ be a fixed index. 
Consider a subcomplex $K(A)_{\bullet}^{s}$ of the (nonaugmented) multi-Koszul complex $K(A)_{\bullet}$ of $A$, given by $K(A)_{\bullet}^{s} = A \otimes J_{\bullet}^{s}$ if $\bullet \geq 2$, 
$K(A)_{\bullet}^{s} = K(A)_{\bullet}$ if $\bullet =0,1$, provided with the induced differential of $K(A)_{\bullet}$. 
It is straightforward to check that the fact that the multi-Koszul complex $K(A)_{\bullet}$ of $A$ is acyclic in positive homological degrees 
implies that the subcomplex $K(A)_{\bullet}^{s}$ is acyclic in homological degrees greater than or equal to $3$. 
Moreover, using \eqref{eq:ker} and the exactness of $K(A)_{\bullet}$ in homological degree $2$ we also get that $K(A)_{\bullet}^{s}$ is acyclic in homological degree $2$. 
As a consequence, we conclude that $K(A)_{\bullet}^{s}$ is acyclic in homological degrees greater than or equal to $2$. 
Using the evident isomorphism $(K(A)^{s}_{\bullet},\delta_{\bullet}|_{K(A)^{s}_{\bullet}}) \simeq A \otimes_{A^{s}} (K(A^{s})_{\bullet},\delta_{\bullet}^{s})$ of complexes of graded (left) $A^{s}$-modules, 
and Lemma \ref{lemma:nakacons}, we get that $(K(A^{s})_{\bullet},\delta_{\bullet}^{s})$ is acyclic in homological degrees greater than or equal to $2$. 
Since the Koszul complex of an $s$-homogeneous algebra is always acyclic in the first homological degree, we conclude that $(K(A^{s})_{\bullet},\delta_{\bullet}^{s})$ 
is acyclic in positive homological degrees, so $A^{s}$ is $s$-Koszul. 

On the other hand, since $A^{s}$ is $s$-Koszul, $(K(A^{s})_{\bullet},\delta_{\bullet}^{s})$ is a minimal projective resolution of $k$ in the category of graded  $A^{s}$-modules. 
We recall that for a bounded below graded right $A^{s}$-module $M$, the minimal projective resolution $(P_{\bullet},d_{\bullet})$ of $M$ in the category of graded right $A^{s}$-modules is of the form $P_{\bullet} = \Tor_{\bullet}^{A^{s}}(M,k) \otimes A^{s}$. 
Since $A$ is a bounded below (right) $A^{s}$-module, and $(K(A)^{s}_{\bullet},\delta_{\bullet}|_{K(A)^{s}_{\bullet}}) \simeq A \otimes_{A^{s}} (K(A^{s})_{\bullet},\delta_{\bullet}^{s})$ 
is acyclic in homological degrees greater than or equal to $2$, 
we conclude that $A$ has a minimal projective resolution in the category of graded right $A^{s}$-modules of length at most $1$, so $r$-$\mathrm{pdim}_{A^{s}}(A) \leq 1$. 

We shall now prove the converse implication, \textit{i.e.} \textit{(ii)} $\Rightarrow$ \textit{(i)}. 
Let us suppose that for each $s \in S$, we have that $A^{s}$ is $s$-Koszul, $r$-$\pd_{A^{s}}(A) \leq 1$ and $\Ker(\delta_{2}) = \oplus_{s \in S} (\Ker(\delta_{2}) \cap (A \otimes R_{s}))$. 
Since $A^{s}$ is $s$-Koszul for each $s$, the Koszul complex $(K(A^{s})_{\bullet},\delta_{\bullet}^{s})$ is a minimal projective resolution of $k$ in the category of graded $A^{s}$-modules. 
Moreover, the condition $r$-$\pd_{A^{s}}(A) \leq 1$ together with the previously seen isomorphisms 
$(K(A)^{s}_{\bullet},\delta_{\bullet}|_{K(A)^{s}_{\bullet}}) \simeq A \otimes_{A^{s}} (K(A^{s})_{\bullet},\delta_{\bullet}^{s})$ 
imply that $K(A)^{s}_{\bullet}$ is acyclic in homological degrees greater than or equal to $2$, for each $s \in S$. 
Since $K(A)_{\bullet} = \oplus_{s \in S} K(A)_{\bullet}^{s}$ for $\bullet \geq 2$, and $\delta_{\bullet} = \oplus_{s \in S} \delta_{\bullet}|_{K(A)_{\bullet}^{s}}$, for $\bullet \geq 3$, 
we get that the multi-Koszul complex is acyclic in homological degrees greater than or equal to $3$. 
Finally, the last condition of \textit{(ii)} and the exactness of $K(A)_{\bullet}^{s}$ in homological degree $2$ for each $s \in S$ imply that 
\[     
\Ker(\delta_{2})  = \bigoplus_{s \in S} (\Ker(\delta_{2}) \cap (A \otimes R_{s})) = \bigoplus_{s \in S} (\mathrm{Im}(\delta_{3}) \cap (A \otimes R_{s})) = \im(\delta_{3}),     
\]
so $K(A)_{\bullet}$ is also acyclic in homological degree $2$. 
Again, since the multi-Koszul complex of any connected graded algebra is always acyclic in the first homological degree, 
we see that $(K(A)_{\bullet},\delta_{\bullet})$ is acyclic in positive homological degrees, 
so $A$ is multi-Koszul. 
The theorem is thus proved. 
\end{proof}

%%%%%%%
\begin{remark}
\label{rem:kosberg0}
Note that the algebras $A^{s}$ in the theorem may depend on the choice of the space of relations $R$. 
The abuse of language is however mild, meaning that the statement holds for any such choice. 
Moreover, we could have equivalently considered the projective dimension of the corresponding left module structure of $A$ over $A^{s}$, for each $s \in S$, 
and the decomposition property for the differential $\delta_{2}'$ of the right multi-Koszul complex of $A$.  
\end{remark}
%%%%%%% 

\begin{remark}
\label{rem:kosberg}
Theorem \ref{thm:koszulbypeaces} and \cite{B2}, Thm. 2.11, tell us that if $A$ is an $S$-multi-Koszul algebra, then, since each $A^{s}$ is $s$-Koszul, for $s \in S$, 
we have the \textit{distributivity} condition on the triples $(E^{s},F^{s},G^{s})$ and 
$(E^{s},F^{s},(G')^{s})$ for $n \geq n_s(j+2)$, where 
\begin{align*}
   E^{s} &= V^{(n-n_s(j))} \otimes J_{j}^{s}, \, F^{s} = \cl{R_{s}}_{n-n_s(j)} \otimes V^{(n_s(j))}, \, \text{for each $j \in \NN$},
   \\
   G^{s} &= V^{(n-n_s(j+2)+1)} \otimes \cl{R_{s}}_{2 s -2} \otimes V^{(n_s(j-2) + 1)}, \, \text{if $j$ is even}, 
   \\
   (G')^{s} &= V^{(n-n_s(j+1))} \otimes R_{s} \otimes V^{(n_s(j-1))}, \, \text{if $j$ is odd}, 
\end{align*}
and the so-called \textit{extra conditions}
\[     (V^{(s-1)} \otimes R_{s}) \cap (\sum_{j=0}^{s-2} V^{(j)} \otimes R_{s} \otimes V^{(s-1-j)}) \subseteq V^{(s-2)} \otimes J_{3}^{s}.     \]
We shall analyse this in more detail in Section \ref{sec:2grados}, for the special case of an algebra with relations in two degrees.
\end{remark}
%%%%%%%

Let $A = T(V)/\cl{R}$ be a multi-Koszul algebra.
As before, we also denote by $A^{s} = T(V)/\cl{R_{s}}$ the associated $s$-th homogeneous algebra, which is Koszul by the previous theorem, 
and we consider the canonical surjective morphisms of graded algebras $\pi_{s} : A^{s} \rightarrow A$, for $s \in S$. 
This allows us to consider any $A$-module as an $A^{s}$-module. 
We furthermore consider the surjective morphisms of graded algebras $\rho_{s} : T(V) \rightarrow A^{s}$. 

Let us denote $\iota_i^{s} : J_{i}^{s} \rightarrow J_i$ the canonical injection of vector spaces. 
We recall that the (left) multi-Koszul complex of $A$ is written as $K(A)_{\bullet} = (A \otimes J_{\bullet}, \delta_{\bullet})_{\bullet \geq 0}$, and analogously, 
the (left) Koszul complex of $A^{s}$ is written as $K(A^{s})_{\bullet} = (A^{s} \otimes J_{\bullet}^{s}, \delta_{\bullet}^{s})_{\bullet \geq 0}$. 
They are considered as augmented complexes if furthermore $K(A)_{-1} = k$ and $K(A^{s})_{-1} =k$, for all $s \in S$, respectively, and in fact $\delta_{0} : K(A)_{\bullet} \rightarrow k$ 
is a minimal projective resolution of $k$ in the category of graded (left) $A$-modules, and for each $s \in S$, $\delta_{0}^{s}: K(A^{s})_{\bullet} \rightarrow k$ 
is a minimal projective resolution of $k$ in the category of graded (left) $A^{s}$-modules.  

It is well-known that the Koszul complex of the tensor algebra $T(V)$ is of the form $K(T(V))_{0} = T(V)$, $K(T(V))_{1} = T(V) \otimes V$ and $K(T(V))_{\bullet} = 0$, if $\bullet \geq 2$. 
The morphism $\partial_{1} : K(T(V))_{1} \rightarrow K(T(V))_{0}$ is given by the restriction of the multiplication of $T(V)$. 
It is considered as an augmented complex if furthermore $K(T(V))_{-1} =k$, giving a minimal projective resolution in the category of graded $T(V)$-modules via 
$\partial_{0} : K(T(V))_{0} \rightarrow k$ defined by the augmentation of $T(V)$. 

In what follows, given a connected nonnegatively graded algebra $A$, $E(A) = \Ext^{\bullet}_{A}(k,k)$ will denote the associated \emph{Yoneda algebra}. 
It is also a connected nonnegatively graded algebra with the cohomological degree $\bullet$ and the Yoneda product. 

For each $s \in S$, the morphism $\pi_{s}$ induces in turn the following morphism $\Pi_{\bullet}^{s} : K(A^{s})_{\bullet} \rightarrow K(A)_{\bullet}$ of complexes 
of (left) $A^{s}$-modules defined by $\Pi_{\bullet}^{s} = \pi_{s} \otimes \iota_{\bullet}^{s}$. 
It is straightforward to check that it commutes with the differential, and in fact it commutes with the augmentation if we define $\Pi_{-1}^{s} = 1_{k}$. 
This in turn induces a morphism $p_{s} : \Hom_{A^{s}} (K(A)_{\bullet},k) \rightarrow \Hom_{A^{s}} (K(A^{s})_{\bullet},k)$ of complexes of vector spaces 
(the differentials are zero by minimality of the resolutions, (\textit{cf.} \cite{C}, Exp. 15, Prop. 10, or \cite{B4}, Prop. 2.4). 
Notice that there are canonical isomorphisms $\Hom_{A^{s}} (K(A)_{\bullet},k) = \Hom_{A} (K(A)_{\bullet},k) \simeq \Ext^{\bullet}_{A} (k,k)$ 
and also $\Hom_{A} (K(A)_{\bullet},k) \simeq J_{\bullet}^{*}$. 
Analogously, we have $(J_{\bullet}^{s})^{*} \simeq \Hom_{A^{s}} (K(A^{s})_{\bullet},k) \simeq \Ext^{\bullet}_{A^{s}} (k,k)$. 
In this fashion, we also write $p_{s} : \Ext^{\bullet}_{A} (k,k) \rightarrow \Ext^{\bullet}_{A^{s}} (k,k)$ for the corresponding morphism of graded vector spaces (with the cohomological degree). 
By using the previous identification, we see that $p_{s} = \oplus_{\bullet \in \NN_{0}} (\iota_{\bullet}^{s})^{*}$, so it is surjective.  

We claim that in fact the previous morphisms $p_{s}$ are also compatible with the Yoneda product, so they induce morphisms of graded algebras. 
This follows from the fact that the product for the Yoneda algebra of a connected graded algebra $B$ is induced by the (unique up to homotopy) 
morphism of augmented complexes $\Delta_{\bullet}(B) : K(B)_{\bullet} \rightarrow K(B)_{\bullet} \otimes K(B)_{\bullet}$ lifting the identity $k \rightarrow k \simeq k \otimes k$, 
for $K(B)_{\bullet}$ the augmented complex associated to a projective resolution of the (left) $B$-module $k$, 
by the formula $\alpha \cup \beta = (\alpha \otimes \beta) \circ \Delta_{\bullet}(B)$, for $\alpha, \beta$ homogeneous cocycles of $\Hom_{B}(K(B)_{\bullet},k)$ 
representing cohomology classes in $\Ext_{B}^{\bullet}(k,k)$. 
If the projective resolution $K(B)_{\bullet}$ is minimal, the identification $\Hom_{B}(K(B)_{\bullet},k) \simeq \Ext_{B}^{\bullet}(k,k)$ even simplifies the treatment. In the previous case, by the Comparison Theorem 2.2.6 and Porism 2.2.7 in \cite{W}, we may conclude that the following diagram 
\[
\xymatrix
{
K(A^{s})_{\bullet}
\ar[rd]^{\Delta_{\bullet}^{s}}
\ar[dd]^{\Pi_{\bullet}^{s}}
\ar[rr]
&
&
k
\ar@{=}[dd]
&
\\
&
K(A^{s})_{\bullet} \otimes K(A^{s})_{\bullet}
\ar[dd]^(0.4){\Pi_{\bullet}^{s} \otimes \Pi_{\bullet}^{s}}
\ar[ru]
&
&
\\
K(A)_{\bullet}
\ar[rd]^{\Delta_{\bullet}}
\ar[rr]
&
&
k
&
\\
&
K(A)_{\bullet} \otimes K(A)_{\bullet}
\ar[ru]
&
&
}     
\]
commutes up to (unique) homotopy, where $\Delta_{\bullet} = \Delta_{\bullet}(A)$ and $\Delta_{\bullet}^{s} = \Delta_{\bullet}(A^{s})$.
This immediately implies that $p_{s}$ is compatible with the Yoneda product. 

In the same manner, for each $s \in S$, the morphism $\rho_{s}$ induces a morphism $F_{\bullet}^{s} : K(T(V))_{\bullet} \rightarrow K(A^{s})_{\bullet}$ of complexes of (left) $A^{s}$-modules defined by $F_{\bullet}^{s} = \rho_{s} \otimes 1$, if $\bullet = 0, 1$, and zero for $\bullet \geq 2$. 
Again, it is clear that it commutes with the differential, and in fact it commutes with the augmentation if we define $F_{-1}^{s} = 1_{k}$. 
It induces a morphism $q_{s} : \Hom_{T(V)} (K(T(V))_{\bullet},k) \rightarrow \Hom_{T(V)} (K(A^{s})_{\bullet},k)$ of complexes of vector spaces 
(the differentials are zero by minimality of the resolutions). 
There are canonical isomorphisms $\Hom_{T(V)} (K(A^{s})_{\bullet},k) = \Hom_{A^{s}} (K(A^{s})_{\bullet},k) \simeq \Ext^{\bullet}_{A^{s}} (k,k) \simeq (J_{\bullet}^{s})^{*}$ and $\Hom_{T(V)} (K(T(V))_{\bullet},k) \simeq \Ext^{\bullet}_{T(V)} (k,k) \simeq k^{*} \oplus V^{*}$, where $V^{*}$ sits in cohomological degree $1$ and $k^{*}$ in cohomological degree zero. 
Hence, we may also write $q_{s} : \Ext^{\bullet}_{A^{s}} (k,k) \rightarrow \Ext^{\bullet}_{T(V)} (k,k)$ for the corresponding morphism of graded vector spaces (with the cohomological degree). 
By the previous identifications we see that $q_{s}$ is given by the projection on the cohomological degrees less than or equal to $1$, so it is surjective. 
By the same argument as before (or just regarding the explicit formulas for the Yoneda product given in Proposition 3.1 of \cite{BM}) we see that $q_{s}$ is a morphism of graded algebras. 

%%%%%%%
\begin{remark}
\label{rem:yoneda0}
Note that in fact the Yoneda algebra is a functorial construction, 
where the definition on the morphisms is as follows (we refer to \cite{LH}, specially Chapters 1 and 3). 
Given any morphism of (augmented) algebras $A \rightarrow A'$, 
it induces a morphism of differential graded coalgebras between their \textit{bar constructions} $B(A) \rightarrow B(A')$ 
(induced by the cofreeness property of $B(A)$ and the map of graded vector spaces $B(A) \rightarrow A'$ given by the composition of the canonical projection $B(A) \rightarrow A$ 
and of $A \rightarrow A'$), so by taking graded duals we obtain a morphism of differential graded algebras $B(A')^{\#} \rightarrow B(A)^{\#}$, 
which in turn induces a morphism of graded algebras $H^{\bullet}(B(A')^{\#}) \rightarrow H^{\bullet}(B(A)^{\#})$, 
\textit{i.e.} a morphism $E(A') \rightarrow E(A)$, since the Yoneda algebra of $A$ (resp., $A'$) coincides with $H^{\bullet}(B(A)^{\#})$ (resp., $H^{\bullet}(B(A')^{\#})$). 

In particular, the morphism $\pi_{s} : A^{s} \rightarrow A$ always induces a morphism of differential graded algebras $\Pi_{s} : B(A)^{\#} \rightarrow B(A^{s})^{\#}$, 
so a morphism of graded algebras $E(A) \rightarrow E(A^{s})$, and we may check that it trivially coincides with the morphism induced by the previously seen map $\Pi^{s}$, 
by constructing comparison morphisms from the minimal projective resolutions to the corresponding reduced bar complexes. 
The same applies to the morphism $\rho_{s} : T(V) \rightarrow A^{s}$, the induced morphism of differential graded algebras $F_{s} : B(A^{s})^{\#} \rightarrow B(T(V))^{\#}$ 
and the consequent map $E(A^{s}) \rightarrow E(T(V))$. 
Note also the commutativity $F_{s} \circ \Pi_{s} = F_{s'} \circ \Pi_{s'}$, for all $s, s' \in S$.

Even though we may have used these constructions to prove all the previous results, 
it is however useful to have the explicit form of the maps induced on the Yoneda algebras coming from the minimal projective resolutions in order to prove the next proposition. 
\end{remark}
%%%%%%%

By the explicit expression of the previous morphisms, it is trivial to check that $q_{s} \circ p_{s} = q_{s'} \circ p_{s'}$, for all $s, s' \in S$. 
Let us denote $E$ the inverse limit of the diagram in the category of graded algebras given by $\{ q_{s} : E(A^{s}) \rightarrow E(T(V)), \text{ for $s \in S$} \}$. 
By the universal property of $E$, and the aforementioned commutativity, there exists a unique morphism of graded algebras $\iota : E(A) \rightarrow E$. 
However, by the explicit expression of the morphisms $p_{s}$, for $s \in S$, it is easy to see that $E(A)$ is the inverse limit in the category of graded vector spaces of the system 
$\{ q_{s} : E(A^{s}) \rightarrow E(T(V)), \text{ for $s \in S$} \}$. 
Since the forgetful functor from the category of graded algebras to the category of graded vector spaces preserves inverse limits, 
$E$ is also an inverse limit in the category of graded vector spaces, so \textit{a fortiori} the morphism $\iota$ is bijective, and hence an isomorphism of graded algebras. 
This implies the next result.
%%%%%%%
\begin{proposition} 
\label{prop:yoneda}
The algebra $E(A)$ of a multi-Koszul algebra $A$ is the inverse limit in the category of graded algebras of the system given by 
$\{ q_{s} : E(A^{s}) \rightarrow E(T(V)), \text{ for $s \in S$} \}$. 
\end{proposition}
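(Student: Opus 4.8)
The plan is to assemble the statement from the morphisms $p_s$ and $q_s$ already constructed, together with the two observations immediately preceding Proposition \ref{prop:yoneda}: first, that $E(A)$ is the inverse limit in the category of \emph{graded vector spaces} of the system $\{q_s : E(A^s) \to E(T(V))\}_{s\in S}$, and second, that the commutativity $q_s \circ p_s = q_{s'}\circ p_{s'}$ holds for all $s,s'\in S$. So I would begin by forming $E$, the inverse limit of the diagram $\{q_s : E(A^s)\to E(T(V))\}_{s\in S}$ in the category of graded algebras, which exists because that category has all (small) limits. The compatibility $q_s\circ p_s = q_{s'}\circ p_{s'}$ says precisely that the family $\{p_s : E(A)\to E(A^s)\}_{s\in S}$ is a cone over the diagram, so the universal property of $E$ yields a unique morphism of graded algebras $\iota : E(A)\to E$ commuting with the structure maps.

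The crux is then to show $\iota$ is an isomorphism, and here the strategy is to move the whole comparison into the category of graded vector spaces, where the inverse limit is computed by the same universal property but can be read off explicitly. The key point is that the forgetful functor from graded algebras to graded vector spaces preserves inverse limits (being a right adjoint — it has the tensor-algebra functor as a left adjoint), so $E$, viewed as a graded vector space, is the inverse limit of $\{q_s : E(A^s)\to E(T(V))\}_{s\in S}$ in graded vector spaces. By the explicit description of $p_s = \oplus_{\bullet} (\iota_\bullet^s)^*$ and of $q_s$ as the projection onto cohomological degrees $\le 1$ — together with the identifications $E(A)\simeq J_\bullet^*$, $E(A^s)\simeq (J_\bullet^s)^*$, $E(T(V))\simeq k^*\oplus V^*$, and the direct sum decomposition $J_\bullet = \oplus_{s\in S} J_\bullet^s$ for $\bullet\ge 2$ (with $J_0 = k$, $J_1 = V$ shared) — one checks directly that $E(A)$ together with the maps $p_s$ satisfies the universal property of the inverse limit in graded vector spaces; this is exactly the second displayed observation in the excerpt. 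Hence $\iota$, being the canonically induced comparison map between two objects each of which is an inverse limit of the same diagram in graded vector spaces, is a bijection, and a bijective morphism of graded algebras is an isomorphism of graded algebras.

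The main obstacle — really the only nontrivial point — is the interplay of the two categories: one must be careful that the graded-algebra inverse limit $E$ has the \emph{same} underlying graded vector space as the graded-vector-space inverse limit, which is what licenses deducing bijectivity of $\iota$ from the vector-space-level universal property. This is handled by the standard fact that limits in a category of algebraic structures are created by the forgetful functor to the underlying sets (here, graded vector spaces): the limit cone in the algebra category is obtained by endowing the limit vector space with the componentwise operations, and these are well-defined precisely because the structure maps $q_s$ are algebra morphisms. Everything else — the explicit formulas for $p_s$ and $q_s$, their compatibility with the Yoneda product, and the identification of $E(A)$ as the vector-space inverse limit — has already been established in the paragraphs preceding the proposition, so the proof is a short assembly of these ingredients.
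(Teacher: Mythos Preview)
Your proposal is correct and follows essentially the same argument as the paper: form the graded-algebra inverse limit $E$, obtain the comparison map $\iota : E(A) \to E$ from the compatibility $q_s \circ p_s = q_{s'} \circ p_{s'}$, use that the forgetful functor to graded vector spaces preserves inverse limits, and conclude bijectivity of $\iota$ from the explicit identification of $E(A)$ with the vector-space inverse limit via the formulas for $p_s$. The only difference is that you supply slightly more justification (the right-adjoint reason for limit preservation, and the explicit unpacking of $J_\bullet = \oplus_s J_\bullet^s$), which the paper leaves implicit.
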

%%%%%%%

\begin{remark}
\label{rem:yoneda}
By the previous result and the explicit characterization of the algebra structure of the Yoneda algebra of a generalized Koszul algebra given in Proposition 3.1 of \cite{BM}, 
we easily get the Yoneda product of the algebra $E(A)$ for a multi-Koszul algebra.   
\end{remark}
%%%%%%%

Again, using the explicit expression of the morphisms $p_{s}$, and Theorem 4.1 in \cite{GMMZ}, we also obtain the following consequence. 
%%%%%%%
\begin{corollary} 
\label{coro:yoneda}
The graded algebra $E(A)$ of a multi-Koszul algebra $A$ is generated by $E^{1}(A) = \Ext_A^1 (k,k)$ and $E^{2}(A) = \Ext_A^2 (k,k)$, \textit{i.e.} it is $\mathcal{K}_{2}$ 
(in the sense of Cassidy and Shelton). 
\end{corollary}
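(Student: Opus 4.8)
The plan is to leverage Proposition \ref{prop:yoneda}, which expresses $E(A)$ as the inverse limit in the category of graded algebras of the system $\{q_s : E(A^s) \to E(T(V)) : s \in S\}$, together with the explicit description of the maps $p_s$ and $q_s$ given just before that proposition. Concretely, I would recall that each $A^s$ is $s$-Koszul (by Theorem \ref{thm:koszulbypeaces}), so by Theorem 4.1 of \cite{GMMZ} the Yoneda algebra $E(A^s)$ is generated in cohomological degrees $1$ and $2$, i.e.\ it is a $\mathcal{K}_2$ algebra. The goal is to transfer this generation property through the inverse limit to $E(A)$.

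First I would fix the identifications: $E^i(A) \simeq J_i^\#$, $E^i(A^s) \simeq (J_i^s)^\#$, and $E^i(T(V)) \simeq k^\#$ for $i=0$, $V^\#$ for $i=1$, and $0$ for $i \geq 2$, and recall that under these identifications $p_s = \oplus_{i} (\iota_i^s)^*$ (the transpose of the inclusion $J_i^s \hookrightarrow J_i = \oplus_{s'\in S} J_i^{s'}$, hence the projection onto the $s$-component), while $q_s$ is the projection onto cohomological degrees $\leq 1$. Since $J_i = \oplus_{s\in S} J_i^s$ for $i \geq 2$ and the pieces for $i=0,1$ are common to all $s$, the map $\iota : E(A) \to E$ is an isomorphism. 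The upshot is that, in each cohomological degree $i \geq 2$, an element of $E^i(A)$ is exactly a tuple $(\xi_s)_{s\in S}$ with $\xi_s \in E^i(A^s)$ (and compatibility over $E(T(V))$ is automatic in degrees $\geq 2$ since those are zero).

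Next I would argue generation. Let $x \in E^i(A)$ with $i \geq 3$; writing $x = (x_s)_{s \in S}$ via the inverse-limit identification, each $x_s \in E^i(A^s)$ is, by the $\mathcal{K}_2$ property of $E(A^s)$, a sum of products of elements of $E^1(A^s)$ and $E^2(A^s)$. Since $p_s : E(A) \to E(A^s)$ is a \emph{surjective} morphism of graded algebras that is moreover an isomorphism in degrees $0$ and $1$ and whose degree-$2$ component is the projection $J_2^\# = R^\# \twoheadrightarrow (R_s)^\# = (J_2^s)^\#$, I can lift each degree-$1$ and degree-$2$ factor appearing in $x_s$ to $E(A)$; the key point is to lift coherently across all $s$ simultaneously. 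For the degree-$1$ factors this is immediate since $E^1(A) = V^\# = E^1(A^s)$ for every $s$. For the degree-$2$ factors, I use that $E^2(A) = R^\# = \oplus_{s\in S} (R_s)^\#$, so a degree-$2$ class of $E(A^s)$, sitting in $(R_s)^\#$, lifts canonically to the $s$-summand of $E^2(A)$, and a degree-$2$ class of $E(A^{s'})$ for $s' \neq s$ lifts to the $s'$-summand, which maps to $0$ under $p_s$. Assembling, I build an element $y$ in the subalgebra of $E(A)$ generated by $E^1(A)$ and $E^2(A)$ such that $p_s(y) = x_s$ for all $s$; by injectivity of $\iota$ (equivalently, since $(p_s)_{s\in S}$ is injective on $E^i(A)$ for $i\geq 2$), $y = x$.

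The main obstacle I anticipate is precisely this coherent lifting: a single product of low-degree generators in $E(A^s)$ pulled back to $E(A)$ will generally have nonzero components in \emph{all} summands $E^i(A^{s'})$, $s' \in S$, not just the $s$-th one, because $p_{s'}$ of a product of elements lifted from the $s$-summand need not vanish. The resolution is to exploit the decomposition $E^2(A) = \oplus_{s\in S}(R_s)^\#$ together with the multiplicative structure of the inverse limit: one should first reduce to the case where $x$ is supported in a single summand, $x = x_s$ with $x_{s'}=0$ for $s'\neq s$, and then show that such an $x$ lies in the subalgebra generated by $E^1(A)$ and the $s$-summand of $E^2(A)$, using that the restriction of the Yoneda product of $E(A)$ to this situation is governed, via $p_s$, by that of $E(A^s)$ and that products landing in other summands can only arise from degree-$2$ generators of those other summands, which are absent. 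Once this single-summand case is handled, the general case follows by additivity. I expect the bookkeeping here, rather than any deep idea, to be the delicate part, and the explicit formulas for the Yoneda products of generalized Koszul algebras from \cite{BM} referenced in Remark \ref{rem:yoneda} are exactly what make it routine.
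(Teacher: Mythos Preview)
Your proposal is correct and follows the same route as the paper: invoke the explicit surjective graded-algebra maps $p_s : E(A) \to E(A^s)$ together with Theorem 4.1 of \cite{GMMZ} (each $E(A^s)$ is generated in degrees $1$ and $2$), and then pull the generation back through the inverse-limit description of Proposition \ref{prop:yoneda}. The paper's own argument is in fact just the one-line pointer to these ingredients; your write-up simply unpacks the bookkeeping, and the ``obstacle'' you flag dissolves exactly as you anticipate once one uses from \cite{BM} that $E^1(A^{s'})\cdot E^1(A^{s'})=0$ for $s'\geq 3$, so any lifted monomial either contains a factor in $(R_s)^\#$ (killed by $p_{s'}$ for $s'\neq s$) or is a pure degree-$1$ product of length $\geq 3$ (killed in every $E(A^{s'})$ with $s'\geq 3$).
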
 
%%%%%%%

\begin{remark}
\label{rem:malo}
Contrary to what happens for homogeneous algebras (see \cite{GMMZ}, Thm. 4.1), the converse of the previous corollary does not hold in the case of multi-homogeneous algebras 
(\textit{e.g.} see the algebra $B$ in \cite{CG}, which is not $\{ 2, 3\}$-multi-Koszul). 
\end{remark}
%%%%%%%

\begin{remark}
\label{rem:yonedaainf}
We may also use the previous results to obtain a description of the $A_{\infty}$-algebra structure of the Yoneda algebra of a multi-Koszul algebra as follows (we also refer to \cite{LH}). 
By the commutative relation stated at the end of Remark \ref{rem:yoneda0} there exists a map of differential graded algebras from 
$B(A)^{*}$ to the inverse limit in the category of differential graded algebras of the system given by $\{ F_{s} : B(A^{s})^{*} \rightarrow B(T(V))^{*}, \text{ for $s \in S$} \}$. 
Moreover, by Proposition \ref{prop:yoneda}, it is in fact a quasi-isomorphism of differential graded algebras, so it induces a quasi-isomorphism of 
$A_{\infty}$-algebras from $E(A)$ to the $A_{\infty}$-algebra given by the cohomology of the inverse limit in the category of differential graded algebras of the system 
$\{ F_{s} : B(A^{s})^{*} \rightarrow B(T(V))^{*}, \text{ for $s \in S$} \}$. 
On the other hand, using the Merkulov's procedure to obtain a canonical $A_{\infty}$-algebra structure on the homology of a differential graded algebra (see \cite{Mer}), 
it is trivial to see that we may choose the higher multiplications of the inverse limit such that $m_{n} (a_{1} \otimes \dots \otimes a_{n}) = 0$ if there are indices $1 \leq i \neq j \leq n$ 
satisfying that $a_{i} \in E^{d_{i}}(A^{s})$ and $a_{j} \in E^{d_{j}}(A^{s'})$ for $s \neq s'$ and $d_{i}, d_{j} > 1$.
Furthermore, the restriction of the higher multiplication $m_{n}$ of $E(A)$ to each $E(A^{s})^{\otimes n}$ 
is given by the  corresponding Merkulov's construction of the higher multiplication of $E(A^{s})$, for each $s \in S$. 
By \cite{HeL}, Thm. 6.4 and 6.5, we get that the $A_{\infty}$-algebra $E(A)$ is generated in degree $1$, and we may in fact choose the higher multiplications 
such that $m_{n}$ vanishes if $n \notin S \cup \{ 2 \}$, and $m_{s} = m_{s}|_{E(A^{s})^{\otimes s}}$, for $s \in S \setminus \{ 2 \}$, 
is given by the usual expression for the case of generalized $s$-Koszul algebras given in the second of the mentioned theorems.  
\end{remark}
%%%%%%%

%%%%%%%%%%%%%%%%%%%%%%%%%%%%%%%%%%%%%%%%%%%%%%%%%%%%%%%%%%%%%%%%%%%%%%%%%%%%%%%%%%%%%%%%%%%%%%%%%%%%%%%%%%%%%%%%%%%%%%%%%%%%%%%%%%%%%%%%%%%%%%%%%%%%%%%%%

\section{\texorpdfstring{Multi-Koszul bimodule resolutions}{Multi-Koszul bimodule resolutions}}

In this section, we shall construct a minimal projective resolution of a multi-Koszul algebra $A$ in the category of (graded) $A$-bimodules, 
adapting the ideas of \cite{B2} (see also \cite{BM}), which is useful to compute its Hochschild (co)homology. 

As usual, we consider $A$ to be an $S$-multi-homogeneous algebra, for a finite subset $S \subseteq \NN_{\geq 2}$. 
We will denote the abelian category of $\mathbb{Z}$-graded left bounded $A$-bimodules with degree preserving bimodule morphisms by $\mathcal{C}$. 
Let $A^{e}=A\otimes_k A^{op}$ be the enveloping algebra of $A$. 
It is well-known that $\mathcal{C}$ is naturally isomorphic to the category of $\mathbb{Z}$-graded left bounded left $A^{e}$-modules, so we can use the results and notations of 
Section \ref{sec:preliminaries} for the graded algebra $A^e$. 

If $s\in S$, we will denote
\[     \bar{J}_{m}^{s} = \bigcap_{j=0}^{m-s} V^{(j)} \otimes R_{s} \otimes V^{(m-s-j)},     \]
if $m \geq s$, and $\bar{J}_{m}^{s} = V^{(m)}$, if $0 \le m < s$. 
Notice that $J_{i}^{s} = \bar{J}_{n_{s}(i)}^{s}$, for all $i \in \NN_{0}$ and $s \in S$. 

For $s\in S$ and $i \geq 0$ we consider the left $A$-module $\bar{K}_{L,s}(A)_i = A \otimes \bar{J}_{i}^{s}$ and the $A$-linear morphism 
$(\delta_{L,s})_i: \bar{K}_{L,s}(A)_i \rightarrow \bar{K}_{L,s}(A)_{i-1}$ induced by $\alpha \otimes v_{1} \dots v_{i} \rightarrow \alpha \cdot v_{1} \otimes v_{2} \dots v_{i}$. 
It is clear that $(\delta_{L,s})^s=0$ and then $(\bar{K}_{L,s}(A)_{\bullet},(\delta_{L,s})_{\bullet})$ is an $s$-complex. 
Analogously, we consider the $s$-complex $(\bar{K}_{R,s}(A)_{\bullet},(\delta_{R,s})_{\bullet})$, where $\bar{K}_{R,s}(A)_i= \bar{J}_{i}^s\otimes A$, 
and where the differential $(\delta_{R,s})_i: \bar{K}_{R,s}(A)_i \rightarrow \bar{K}_{R,s}(A)_{i-1}$ is induced by 
$v_{i} \dots v_{1} \otimes \alpha \rightarrow v_{i} \dots v_{2} \otimes v_{1} \cdot \alpha$. 
We define $\bar{K}_{L-R,s}(A)_{\bullet}$ as $\bar{K}_{L,s}(A)_{\bullet} \otimes A=A\otimes \bar{K}_{R,s}(A)_{\bullet}$, 
which can be regarded as an $s$-complex of bimodules either with differential $\delta'_{L,s}=\delta_{L,s} \otimes 1_A$ or $\delta '_{R,s}=1_A \otimes \delta_{R,s}$. 
Note also that $\delta'_{L,s}$ and $\delta'_{R,s}$ commute.

We now consider the complex of $A$-bimodules $(K_{L-R}(A)_{\bullet},(\delta_{L-R})_{\bullet})$ defined as follows. 
We set $K_{L-R}(A)_0=A\otimes A$, $K_{L-R}(A)_1=A\otimes V\otimes A$, 
and $K_{L-R}(A)_i = \bigoplus_{s\in S}\bar{K}_{L,s}(A)_{n_{s}(i)} \otimes A$, for $i \geq 2$, together with the differential $(\delta_{L-R})_{\bullet}$ 
defined as follows: $(\delta_{L-R})_{1}(\sum_j \alpha_j \otimes v_j \otimes \alpha'_j) = \sum_j (\alpha_j v_j \otimes \alpha'_j - \alpha_j \otimes v_j \alpha'_j)$, 
for $v_{j} \in V$ and $\alpha_j, \alpha'_j \in A$, and, for $i \geq 2$, we set $(\delta_{L-R})_{i} = \sum_{s\in S} (\delta_{L-R,s})_{i}$, where 
\[
(\delta_{L-R,s})_{i} =
\begin{cases}
\delta'_{L,s}-\delta'_{R,s}, & \text{if $i$ odd},
\\ 
\sum\limits_{j=0}^{s-1} (\delta'_{L,s})^{j} (\delta'_{R,s})^{(s-1-j)}, & \text{if $i$ even}.
\end{cases}
\]
It is clear that $(\delta_{L-R})_{i+1} \circ (\delta_{L-R})_{i}=0$, for $i \geq 0$, so $(K_{L-R}(A)_{\bullet},(\delta_{L-R})_{\bullet})$ is indeed a complex of pure projective $A$-bimodules. 
It can be regarded as an augmented complex for the augmentation $(\delta_{L-R})_{0} : K_{L-R}(A)_{0} \rightarrow A$ given by the product of the algebra. 
It is called the \emph{muti-Koszul bimodule complex} of $A$.
%%%%%%%
\begin{theorem}
Let $A$ be an $S$-multi-homogeneous algebra with space of relations $R = \oplus_{s \in S} R_s$, $R_s \subseteq V^{(s)}$, and $S$ a finite subset of $\NN_{\ge 2}$.
The augmented multi-Koszul bimodule complex 
\begin{equation}
\label{eq:bimodkoszulcomplex}
\cdots \longrightarrow K_{L-R}(A)_2 \overset{(\delta_{L-R})_{2}}{\longrightarrow} K_{L-R}(A)_1 \overset{(\delta_{L-R})_{1}}{\longrightarrow} K_{L-R}(A)_0 \overset{(\delta_{L-R})_{0}}{\longrightarrow} A \longrightarrow 0
\end{equation}
is exact if and only if $A$ is multi-Koszul.
\end{theorem}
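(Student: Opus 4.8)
The plan is to relate the exactness of the bimodule complex $K_{L-R}(A)_{\bullet}$ to the exactness of the one-sided multi-Koszul complex $K(A)_{\bullet}$, which by Definition \ref{def:multikoszul} and the remarks following it is precisely the condition that $A$ be multi-Koszul. The standard tool here, following \cite{B2}, is to tensor the bimodule complex over $A$ with the trivial module $k$ on one side and recognize the result, up to the natural identifications, as the one-sided multi-Koszul complex; then invoke a Nakayama-type argument (Lemma \ref{lemma:nakacons}) to lift exactness back up.

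\medskip

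\emph{Step 1: the easy direction.} Suppose \eqref{eq:bimodkoszulcomplex} is exact. Applying the right-exact functor $k \otimes_{A} (\place)$ (where $k$ acts on the right tensor factor) to the bimodule complex, we have $k \otimes_{A} (\bar{K}_{L,s}(A)_{n_{s}(i)} \otimes A) \simeq A \otimes \bar{J}_{n_{s}(i)}^{s} = A \otimes J_{i}^{s}$, and one checks directly from the formulas for $(\delta_{L-R,s})_{i}$ that under this identification the induced differential is exactly $\delta_{i}$ of Definition \ref{def:multikoszul}: the even-degree sum $\sum_{j=0}^{s-1}(\delta'_{L,s})^{j}(\delta'_{R,s})^{s-1-j}$ collapses, after killing the right $A$-action, to the single term $(\delta'_{L,s})^{s-1}$, which is the even differential $\hat\delta_{i}$; similarly $\delta'_{L,s}-\delta'_{R,s}$ collapses to $\delta'_{L,s}$, the odd differential. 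Since $A\otimes A$ is free as a right $A$-module and more generally each $K_{L-R}(A)_{i}$ is free as a right $A$-module, the complex \eqref{eq:bimodkoszulcomplex} is a complex of modules that are free on the right, so $\mathrm{Tor}^{A}_{>0}$ applied to it on the right vanishes termwise; hence applying $k\otimes_{A}(\place)$ preserves exactness. Therefore $k \otimes_{A} K_{L-R}(A)_{\bullet} \to k \otimes_{A} A = k$ is exact, i.e.\ $K(A)_{\bullet} \to k$ is a resolution, so $A$ is multi-Koszul.

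\medskip

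\emph{Step 2: the converse.} Suppose $A$ is multi-Koszul, so $K(A)_{\bullet}$ is a resolution of the trivial left $A$-module $k$. We want exactness of \eqref{eq:bimodkoszulcomplex} in $\mathcal{C}$. Regard the bimodule complex as a complex of left $A^{e}$-modules; each term is $A^{e}$-free (pure projective), and each term is in particular free as a \emph{right} $A$-module via the rightmost tensor factor. Consider the sequence of right-$A$-module maps obtained at each spot, say $K_{L-R}(A)_{i+1} \to K_{L-R}(A)_{i} \to K_{L-R}(A)_{i-1}$ (with the usual conventions at the ends). After applying $k \otimes_{A}(\place)$ on the \emph{left} factor we obtain, by the same identification as in Step 1 but now on the other side, the right multi-Koszul complex $K(A)'_{\bullet}$ of Remark \ref{rem:rightmultikoszul}, which is exact by Corollary \ref{prop:left=right} since $A$ is multi-Koszul. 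Applying Lemma \ref{lemma:nakacons} with the ideal $I = A_{>0}\otimes A + A\otimes A_{>0}$ inside $A^{e}$ — noting that $A^{e}/I \simeq k$ and that all the modules involved are left bounded — we conclude that the original sequence of $A^{e}$-modules is exact at each spot. This gives exactness of \eqref{eq:bimodkoszulcomplex} in positive homological degrees; exactness at $K_{L-R}(A)_{0} \to A \to 0$ is clear since $(\delta_{L-R})_{0}$ is the product and is surjective, and exactness at $K_{L-R}(A)_{1} \to K_{L-R}(A)_{0}$ (the kernel of the multiplication being generated by the $a\otimes v\otimes 1 - a\otimes 1\otimes v$ type elements, which is the content of $V \simeq A_{>0}/A_{>0}^{2}$) is standard.

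\medskip

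\emph{Main obstacle.} The delicate point is Step 2: to apply Lemma \ref{lemma:nakacons} one must know that $k\otimes_{A}(\place)$ (on one side) turns the bimodule complex into the one-sided multi-Koszul complex \emph{as a complex}, i.e.\ that the combinatorial differential formulas $(\delta_{L-R,s})_{i}$ — especially the even-degree sum $\sum_{j}(\delta'_{L,s})^{j}(\delta'_{R,s})^{s-1-j}$, which genuinely mixes the two sides — reduce correctly. One has to verify that when the right (resp.\ left) $A$-action is trivialized, all but one summand dies and the surviving summand is exactly $\hat\delta_{i}$ (resp.\ $\hat\delta'_{i}$), and also that the terms $\bar J^{s}_{n_s(i)}$ of the bimodule complex match $J^{s}_{i}$ after the identification $J_{i}^{s} = \bar{J}_{n_{s}(i)}^{s}$. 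Once this bookkeeping is in place, the rest is a formal consequence of the Nakayama lemma and of the already-established left-right symmetry of the multi-Koszul property.
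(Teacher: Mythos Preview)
Your overall strategy matches the paper's: relate the bimodule complex to the one-sided multi-Koszul complex by tensoring with $k$ on one side, use Lemma~\ref{lemma:nakacons} for one implication and a contractibility argument for the other. Both steps, however, contain genuine errors in the details.

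\medskip
\textbf{Step 1.} The inference ``each term is free as a right $A$-module, so $\Tor^A_{>0}$ vanishes termwise, hence $(\place)\otimes_A k$ preserves exactness'' is not valid as stated: termwise flatness of an exact complex does not by itself force the tensored complex to be exact (think of the minimal free resolution of a non-projective module). What makes this work here is that the \emph{augmented} complex \eqref{eq:bimodkoszulcomplex}, including the final term $A$, is a bounded-below exact complex whose \emph{every} term is free as a right $A$-module (in particular $A$ itself). Such a complex is contractible in the category of right $A$-modules, and then any additive functor preserves its exactness. This is exactly the paper's argument: \eqref{eq:bimodkoszulcomplex} is a projective resolution in $\mathcal{C}_R$ of the projective object $A$, hence homotopically trivial, so its image under $(\place)\otimes_A k$ is homotopically trivial as well.

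\medskip
\textbf{Step 2.} Your choice of ideal is wrong. With $I = A_{>0}\otimes A + A\otimes A_{>0} = (A^e)_{>0}$ one has $A^e/I \simeq k$, and $(A^e/I)\otimes_{A^e} K_{L-R}(A)_i \simeq J_i$ with \emph{zero} induced differential (the bimodule complex is minimal); this is neither exact nor the right multi-Koszul complex $K(A)'_\bullet$ that you computed. To make Lemma~\ref{lemma:nakacons} match the one-sided tensor $k\otimes_A(\place)$, take instead $I = A_{>0}\otimes A^{\mathrm{op}} \subseteq (A^e)_{>0}$, so that $A^e/I \simeq A^{\mathrm{op}}$ and $(A^e/I)\otimes_{A^e} M \simeq k\otimes_A M$. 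The paper does the symmetric thing: it applies $(\place)\otimes_A k$ on the right, obtains $K(A)_\bullet$ directly, and invokes Lemma~\ref{lemma:nakacons} (implicitly with $I = A\otimes A^{\mathrm{op}}_{>0}$). Your detour through $K(A)'_\bullet$ and Corollary~\ref{prop:left=right} is then unnecessary, though harmless once the ideal is fixed.
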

%%%%%%%
\begin{proof}
We suppose that $A$ is multi-Koszul. 
Applying the functor $(\place) \otimes_A k$ to \eqref{eq:bimodkoszulcomplex}, we obtain the (augmented) complex $(K(A)_{\bullet},\delta_{\bullet})$, 
which is exact when $A$ is multi-Koszul. 
Since the $A$-bimodules $K_{L-R}(A)_i$ are graded-free and left bounded for all $i\in \mathbb{N}_0$,
Lemma \ref{lemma:nakacons} implies that the complex \eqref{eq:bimodkoszulcomplex} is exact.

Assume now that \eqref{eq:bimodkoszulcomplex} is exact. 
Let us denote by $\mathcal{C}_R$ the abelian category of $\mathbb{Z}$-graded left bounded right $A$-modules. 
Since \eqref{eq:bimodkoszulcomplex} is exact, it is a projective resolution of $A$ in $\mathcal{C}_R$, so, taking into account that $A$ is projective in $\mathcal{C}_R$, 
the complex \eqref{eq:bimodkoszulcomplex} is homotopically trivial as a complex of objects of graded right $A$-modules. 
Therefore, its image under the functor $(\place) \otimes_A k$ is \textit{a fortiori} homotopically trivial (as a complex of vector spaces). 
Since this image is the left multi-Koszul complex of $A$, it is exact in positive degrees, so $A$ is multi-Koszul. 
\end{proof}

%%%%%%%
\begin{remark}
If $A$ is multi-Koszul, since there is an obvious isomorphism of complexes of the form $k \otimes_{A^{e}} K_{L-R}(A)_{\bullet} \simeq k \otimes_{A} K(A)_{\bullet}$, 
having in fact vanishing differential, by the comments in the antepenultimate paragraph of Section \ref{sec:preliminaries} it follows that 
the complex \eqref{eq:bimodkoszulcomplex} is a minimal projective resolution of $A$ in $\mathcal{C}$.
\end{remark}
%%%%%%%

Using the previous complex, since the Hochschild homology groups $HH_{\bullet}(A)$ are isomorphic to $\Tor_{\bullet}^{A^{e}} (A,A)$ (for $k$ is a field),  
we see that $HH_{\bullet}(A)$ may be computed by $H_{\bullet} (A \otimes_{A^e} K_{L-R}(A)_{\bullet},1_A\otimes (\delta_{L-R})_{\bullet})$. 
In the same way, the Hochschild cohomology groups $HH^{\bullet}(A)$ are isomorphic to $\Ext^{\bullet}_{A^{e}} (A,A)$, so 
$HH^{\bullet}(A)$ can be computed from $H^{\bullet} (\Hom_{A^e} (K_{L-R}(A)_{\bullet},A), \Hom((\delta_{L-R})_{\bullet},1_{A}))$. 

%%%%%%%%%%%%%%%%%%%%%%%%%%%%%%%%%%%%%%%%%%%%%%%%%%%%%%%%%%%%%%%%%%%%%%%%%%%%%%%%%%%%%%%%%%%%%%%%%%%%%%%%%%%%%%%%%%%%%%%%%%%%%%%%%%%%%%%%%%%%%%%%%%%%%%%

\section{\texorpdfstring{Multi-Koszul algebras for relations in two degrees}{Multi-Koszul algebras for relations in two degrees}}
\label{sec:2grados}

In this section,  we concentrate on the special case of having relations in only two degrees. 
We will find necessary and sufficient conditions on some sequences of vector subspaces of the tensor powers of the base space $V$ to obtain the multi-Koszul property, 
in an analogous manner as the case of (generalized) Koszul algebras done by Berger. 
As expected, several of the subspaces analysed and the conditions involved were already considered by Berger, 
but new sequences of subspaces satisfying more involved conditions are introduced in order to obtain the multi-Koszul definition. 
This section is somehow long and rather tedious, which seems to be unavoidable to us, since it includes all detailed computations and analysis of the mentioned lattices, 
but their importance justifies in our opinion the extension. 

%%%%%%%%%%%%%%%%%%%%%%%%%%%%%%%%%%%%%%%%%%%%%%%%%%%%%%%%%%%%%%%%%%%%%%%%%%%%%%%%%%%%%%%%%%%%%%%%%%%%%%%%%%%%%%%%%%%%%%%%%%%%%%%%%%%%%%%%%%%%%%%%%%%%%%%%%

\subsection{\texorpdfstring{Preliminaries}{Preliminaries}}

We shall proceed as follows. 
After some preliminaries, we will analyse the construction procedure of a minimal graded projective resolution of the trivial $A$-module $k$. 
The description of the successive kernels of the differentials in this resolution will lead us to several ``naturally occurring'' sequences of vector subspaces of tensor powers of $V$, 
and we will impose necessary and sufficient conditions on these lattices to get the multi-Koszul property on the algebra $A$. 
The situation is summarized in Theorem \ref{thm:Kozulequiv}.
We advice the reader that we will use the results of Section 2, specially the ones concerning the construction of projective covers 
and minimal projective resolutions of bounded below graded modules. 

Next, we state an easy lemma concerning vector spaces that will be used in the sequel without much mention.
%%%%%%%
\begin{lemma}
\label{lem:trivialspace}
Let $V$ and $W$ be vector spaces such that $V$ is nontrivial. 
If $V \otimes W=0$, then $W=0$.
Also, if $W$ and $W'$ are subspaces of the vector spaces $V$ and $V'$, respectively, 
then $(W\otimes V')\cap (V\otimes W')=W\otimes W'$.
\end{lemma}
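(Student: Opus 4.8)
The plan is to treat the two assertions of the lemma separately, using only elementary facts about tensor products of vector spaces over $k$.

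For the first assertion I would argue contrapositively. Suppose $W \neq 0$, and pick nonzero elements $v \in V$ and $w \in W$ (the former exists since $V$ is nontrivial). Extending $\{v\}$ to a basis of $V$ and $\{w\}$ to a basis of $W$, the elementary tensors of pairs of basis vectors form a basis of $V \otimes W$, so $v \otimes w$ is one of these basis vectors and in particular $v \otimes w \neq 0$. Hence $V \otimes W \neq 0$, which is precisely the contrapositive of the statement.

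For the second assertion, the inclusion $W \otimes W' \subseteq (W \otimes V') \cap (V \otimes W')$ is immediate, since $W \otimes W'$ sits inside both $W \otimes V'$ and $V \otimes W'$. For the reverse inclusion I would fix vector space decompositions $V = W \oplus U$ and $V' = W' \oplus U'$. Distributing $\otimes$ over these direct sums yields
\[
V \otimes V' = (W \otimes W') \oplus (W \otimes U') \oplus (U \otimes W') \oplus (U \otimes U'),
\]
and under this identification one has $W \otimes V' = (W \otimes W') \oplus (W \otimes U')$ and $V \otimes W' = (W \otimes W') \oplus (U \otimes W')$. Since the four summands above are in direct sum, the intersection of these two subspaces is exactly $W \otimes W'$, as desired.

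There is no real obstacle here; the only point worth a word of justification is that over the field $k$ all the relevant natural maps (such as $W \otimes W' \to V \otimes V'$ and $W \otimes V' \to V \otimes V'$) are injective — which holds because $k$-vector spaces are flat, or simply because bases can be chosen compatibly — so the identifications used above are legitimate.
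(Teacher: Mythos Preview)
Your proof is correct. The paper actually omits the proof entirely, presenting this as an ``easy lemma'' to be used without further comment, so there is nothing to compare against; your argument via bases for the first assertion and via chosen complements $V = W \oplus U$, $V' = W' \oplus U'$ for the second is exactly the kind of standard verification one would expect here.
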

%%%%%%%

From now on, we fix two integers $a$ and $b$ such that $2 \leq a<b$, $R_a$ and $R_b$ subspaces of $V^{(a)}$ and $V^{(b)}$, respectively, and $R=R_a \oplus R_b$. 
We also assume that $R$ satisfies the minimality condition \eqref{eq:minimality} which implies that $k\otimes_{T(V)} \cl{R} \simeq R_a\oplus R_b$ as graded vector spaces.
The two-sided ideal $I=\cl{R}$ generated by $R$ in the tensor algebra $T(V)$ is given by $I=\bigoplus_{n\in \mathbb{Z}} I_n$, where:
\begin{align*}
I_n &= 0, & \hskip 0.2cm & \text{if} \hskip 0.2cm n<a,
\\
I_n &=\sum\limits_{j=0}^{n-a} V^{(j)} \otimes R_a \otimes V^{(n-a-j)}, & \hskip 0.2cm & \text{if} \hskip 0.2cm a \leq n<b,
\\
I_n &= \sum\limits_{j=0}^{n-a} V^{(j)} \otimes R_a \otimes V^{(n-a-j)} + \sum\limits_{h=0}^{n-b} V^{(h)} \otimes R_b \otimes V^{(n-b-h)}, & \hskip 0.2cm & \text{if} \hskip 0.2cm b \leq n.
\end{align*}

We recall the fact already stated at the end of Section \ref{sec:preliminaries} that for any nonnegatively graded connected algebra $A = T(V)/\cl{R}$, 
where $V \simeq A_{>0}/(A_{>0} \cdot A_{>0})$ is a vector space spanned by a minimal set of (homogeneous) generators of $A$, and $R$ is a space of relations of $A$, 
the minimal projective resolution of the trivial (left) $A$-module $k$ begins as 
\[
A \otimes R \overset{\delta_2}{\longrightarrow} A \otimes V \overset{\delta_1}{\longrightarrow} A \overset{\delta_0}{\longrightarrow} k \longrightarrow 0,
\]
where $\delta_2$ is induced by the usual map $\alpha \otimes v_{1} \dots v_{n} \mapsto \alpha v_{1} \dots v_{n-1} \otimes v_{n}$. 
As previously explained we will deal with the case where $V$ is concentrated in degree $1$ and where $R$ is concentrated in degrees $a$ and $b$. 

We also recall the definition of the following vector spaces, both concentrated in degree $n$,
\begin{align*}
\bar{J}_n^a & = \bigcap\limits_{j=0}^{n-a} V^{(j)}\otimes R_a \otimes V^{(n-a-j)}, & \text{for} \hskip 0.2cm n \geq a,
\\
\bar{J}_n^b & = \bigcap\limits_{j=0}^{n-b} V^{(j)}\otimes R_b \otimes V^{(n-b-j)}, & \text{for} \hskip 0.2cm n \geq b.
\end{align*}
Notice that $\bar{J}_m^b\cap (V^{(m-n)} \otimes \bar{J}_n^a) =0$, for all $m \geq n$, by the minimality condition on $R$. 

%%%%%%%%%%%%%%%%%%%%%%%%%%%%%%%%%%%%%%%%%%%%%%%%%%%%%%%%%%%%%%%%%%%%%%%%%%%%%%%%%%%%%%%%%%%%%%%%%%%%%%%%%%%%%%%%%%%%%%%%%%%%%%%%%%%%%%%%%%%%%%%%%%%%%%%%%

\subsection{\texorpdfstring{Description of $\Ker(\delta_2)$}{Description of ker(delta2)}}

It follows easily from the definition of the map $\delta_{2}$ that the $n$-th homogeneous component of its kernel is
\[     \Ker(\delta_{2})_{n} = \frac{(V^{(n-a)} \otimes R_{a} \oplus V^{(n-b)} \otimes R_{b}) \cap (I_{n-1} \otimes V)}
                                                 {I_{n-a} \otimes R_{a} \oplus I_{n-b} \otimes R_{b}},     \]
where the direct sums appearing in the right member are due to the minimality condition on $R_{a} \oplus R_{b}$. 
Then
\[
(\Ker(\delta_2))_n =0 \hskip 0.2cm \text{if} \hskip 0.2cm n\le a \hskip 0.2cm \text{and} \hskip 0.2cm (\Ker(\delta_2))_{a+1} = \bar{J}_{a+1}^a,
\]
so there is an injection $\bar{J}_{a+1}^{a} \hookrightarrow k \otimes_{A} \Ker(\delta_{2})$ of graded vector spaces, 
and thus the projective cover of $\Ker(\delta_2)$ should include $A\otimes \bar{J}_{a+1}^a$, and furthermore, by Lemma \ref{lemma:tresimp}, it should also include $A\otimes \bar{J}_{b+1}^b$. 
It may also include other $s$-pure modules. 
We shall now analyse this situation in more detail. 

It is straightforward to see that for $n=a+m$ with $2\le m\le \min \{ a-1, b-a\}$
\[     (\Ker(\delta_2))_n = (V^{(m)}\otimes R_a) \cap \big(\sum\limits_{j=0}^{m-1} V^{(j)}\otimes R_a\otimes V^{(m-j)}\big)\supseteq V^{(m-1)}\otimes \bar{J}_{a+1}^a.     \]
Then, for the same indices as before, we have that $(\Ker(\delta_2))_n =A_{m-1} \cdot \bar{J}_{a+1}^a$ if and only if 
\begin{equation}
\label{eq:econa}
(V^{(m)}\otimes R_a)\cap \big(\sum\limits_{j=0}^{m-1}V^{(j)}\otimes R_a\otimes V^{(m-j)}\big) = V^{(m-1)}\otimes \bar{J}_{a+1}^a,
\end{equation}
where as usual the dot denotes the action of the ring $A$ on $(\Ker(\delta_2))_{a+1}=\bar{J}_{a+1}^a$.

Note that if \eqref{eq:econa} holds for $m=a-1$, then for $2 \leq m \leq a-1$ and $2 \leq t \leq m-2$, we have that
\[     (V^{(m)}\otimes R_a)\cap \Big(\sum_{j=m-t}^{m-1} V^{(j)} \otimes R_a \otimes V^{(m-j)}\Big)\subseteq V^{(m-1)}\otimes \bar{J}_{a+1}^a.     \]
By Lemma \ref{lem:trivialspace}, we get \eqref{eq:econa} for $2 \leq m \leq a-1$.

For $n=a+t=b+h$ with $1\le h\le 2a-b-1$, $(\Ker(\delta_2))_n$ is equal to the image under the map $T(V)\otimes R\rightarrow A\otimes R$ of
\[
[(V^{(t)}\otimes R_a) \oplus (V^{(h)}\otimes R_b)]\cap \big(\sum\limits_{j=0}^{t-1} V^{(j)}\otimes R_a\otimes V^{(t-j)}
+\sum\limits_{j'=0}^{h-1} V^{(j')}\otimes R_b\otimes V^{(h-j')}\big),
\]
which contains $(V^{(t-1)}\otimes \bar{J}_{a+1}^a)\oplus (V^{(h-1)}\otimes \bar{J}_{b+1}^b)$. 
Notice again that the last sum is direct by the minimality condition on $R$ and Lemma \ref{lem:trivialspace}.

Consider now the following relations
\begin{equation}
\label{eq:ec}
\begin{split}
(V^{(l)}\otimes R_a)\cap \big(\sum\limits_{j=0}^{l-1} V^{(j)}\otimes R_a \otimes V^{(l-j)}\big) & = V^{(l-1)}\otimes \bar{J}_{a+1}^a,
\\
(V^{(h)}\otimes R_b)\cap \big(\sum\limits_{j=0}^{h-1} V^{(j)}\otimes R_b \otimes V^{(h-j)}\big) & = V^{(h-1)}\otimes \bar{J}_{b+1}^b.
\end{split}
\end{equation}
With similar arguments as before, the second equation of \eqref{eq:ec} holds for $2 \leq h \leq b-1$ whenever it holds for $h=b-1$. 
The relations \eqref{eq:ec} for $l=a-1$ and $h=b-1$ will be called \emph{extra conditions} and will be abbreviated by \textit{e.c.}
Note also that the equality in the previous identities is clearly seen to be equivalent to the inclusion of the left members inside the right ones. 

The following standard definition will be generalized afterwards (\textit{cf.} \cites{O}, Ch. I, \S 4, Axiom $\delta$).
%%%%%%%
\begin{definition}
Given $t \in \NN$, a sequence $(E, F_{1}, \dots, F_{t})$ of subspaces of a given vector space is said to be \emph{distributive} if
\[     E\cap \big(\sum_{j = 1}^{t} F_{j}\big) = \sum_{j = 1}^{t} (E \cap F_{j}).     \]
\end{definition}
%%%%%%%

Notice that the inclusion of the right member inside the left one always holds. 

%%%%%%%
\begin{proposition}
\label{prop:eciffdistributivity}
The \textit{e.c.} hold if and only if for $2\le m\le a-1$ and $2\le h\le b-1$ the triples
\begin{align*}
&(V^{(m)}\otimes R_a, R_a\otimes V^{(m)}, \sum\limits_{j=1}^{m-1} V^{(j)}\otimes R_a \otimes V^{(m-j)}),
\\
&(V^{(h)}\otimes R_b, R_b\otimes V^{(h)}, \sum\limits_{j=1}^{h-1} V^{(j)}\otimes R_b \otimes V^{(h-j)})
\end{align*}
are distributive and there are inclusions
\begin{align*}
(V^{(m)}\otimes R_a)\cap (R_a \otimes V^{(m)}) & \subseteq V^{(m-1)}\otimes R_a\otimes V,
\\
(V^{(h)}\otimes R_b)\cap (R_b \otimes V^{(h)}) & \subseteq V^{(h-1)}\otimes R_b \otimes V.
\end{align*}
\end{proposition}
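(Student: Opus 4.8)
The plan is to reduce everything to the two-generator case and then quote a classical fact about distributive lattices of three subspaces. The statement is symmetric in $a$ and $b$, so it suffices to treat, say, the $R_a$-row; the $R_b$-row is proved identically with $a$ replaced by $b$. Throughout, recall that by the remark following the definition of \emph{distributive}, the inclusion $\sum_{j}(E\cap F_j)\subseteq E\cap(\sum_j F_j)$ is automatic, so in every ``distributivity'' claim only the reverse inclusion is at stake; similarly, the \textit{e.c.}\ themselves are equivalences that reduce to the inclusion of the left member in the right member, as noted just before the statement. I would first record the elementary observation, used repeatedly, that for a subspace $W\subseteq V^{(a)}$ one has $V^{(j)}\otimes W\otimes V^{(l-j)}\subseteq V^{(j-1)}\otimes(V\otimes W)\otimes V^{(l-j)}$ and, when the middle factor is $R_a$, that $V^{(m-1)}\otimes\bar J_{a+1}^a = V^{(m-1)}\otimes\big((R_a\otimes V)\cap(V\otimes R_a)\big)$ by Lemma \ref{lem:trivialspace} and the definition of $\bar J_{a+1}^a$.

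Next I would prove the implication ``distributivity $+$ the displayed inclusions $\Rightarrow$ \textit{e.c.}''. Fix $m$ with $2\le m\le a-1$; the goal is
\[
(V^{(m)}\otimes R_a)\cap\Big(\sum_{j=0}^{m-1}V^{(j)}\otimes R_a\otimes V^{(m-j)}\Big)\subseteq V^{(m-1)}\otimes\bar J_{a+1}^a.
\]
Split the sum on the right as $\big(R_a\otimes V^{(m)}\big) + \sum_{j=1}^{m-1}V^{(j)}\otimes R_a\otimes V^{(m-j)}$. Applying distributivity of the triple $(V^{(m)}\otimes R_a,\ R_a\otimes V^{(m)},\ \sum_{j=1}^{m-1}V^{(j)}\otimes R_a\otimes V^{(m-j)})$, the left-hand side equals
\[
\big((V^{(m)}\otimes R_a)\cap(R_a\otimes V^{(m)})\big) + \big((V^{(m)}\otimes R_a)\cap \sum_{j=1}^{m-1}V^{(j)}\otimes R_a\otimes V^{(m-j)}\big).
\]
The first summand is contained in $V^{(m-1)}\otimes R_a\otimes V$ by the displayed inclusion hypothesis, and also in $V^{(m)}\otimes R_a = V^{(m-1)}\otimes (V\otimes R_a)$; intersecting and using Lemma \ref{lem:trivialspace} gives $V^{(m-1)}\otimes\big((R_a\otimes V)\cap(V\otimes R_a)\big)=V^{(m-1)}\otimes\bar J_{a+1}^a$. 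For the second summand I would induct on $m$: each term $V^{(j)}\otimes R_a\otimes V^{(m-j)}$ with $1\le j\le m-1$ lies in $V\otimes\big((V^{(m-1)}\otimes R_a)\cap\sum_{j'=0}^{m-2}V^{(j')}\otimes R_a\otimes V^{(m-1-j')}\big)$ after factoring out the leftmost $V$, which by the inductive hypothesis (the case $m-1$) is contained in $V\otimes V^{(m-2)}\otimes\bar J_{a+1}^a = V^{(m-1)}\otimes\bar J_{a+1}^a$; the base case $m=2$ is immediate since the sum $\sum_{j=1}^{1}$ is $V\otimes R_a\otimes V$ and $(V^{(2)}\otimes R_a)\cap(V\otimes R_a\otimes V)=V\otimes\big((V\otimes R_a)\cap(R_a\otimes V)\big)=V\otimes\bar J_{a+1}^a$ by Lemma \ref{lem:trivialspace}. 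This yields \textit{e.c.}\ for all $m\le a-1$, in particular for $m=a-1$.

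Finally I would prove the converse, ``\textit{e.c.}\ $\Rightarrow$ distributivity $+$ inclusions''. Assume the \textit{e.c.}; by the discussion preceding the statement it then holds in the form \eqref{eq:econa} for all $2\le m\le a-1$. The displayed inclusion $(V^{(m)}\otimes R_a)\cap(R_a\otimes V^{(m)})\subseteq V^{(m-1)}\otimes R_a\otimes V$ is immediate, since $R_a\otimes V^{(m)}\subseteq\sum_{j=0}^{m-1}V^{(j)}\otimes R_a\otimes V^{(m-j)}$, so the intersection is contained in $(V^{(m)}\otimes R_a)\cap\sum_{j=0}^{m-1}V^{(j)}\otimes R_a\otimes V^{(m-j)}=V^{(m-1)}\otimes\bar J_{a+1}^a\subseteq V^{(m-1)}\otimes R_a\otimes V$. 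For distributivity of the triple, the nontrivial inclusion is
\[
(V^{(m)}\otimes R_a)\cap\Big((R_a\otimes V^{(m)})+\sum_{j=1}^{m-1}V^{(j)}\otimes R_a\otimes V^{(m-j)}\Big)\subseteq \big((V^{(m)}\otimes R_a)\cap(R_a\otimes V^{(m)})\big)+\big((V^{(m)}\otimes R_a)\cap\textstyle\sum_{j=1}^{m-1}V^{(j)}\otimes R_a\otimes V^{(m-j)}\big).
\]
But the left side equals $V^{(m-1)}\otimes\bar J_{a+1}^a$ by \eqref{eq:econa}, and this is already contained in $V^{(m-1)}\otimes R_a\otimes V\cap(V^{(m-1)}\otimes V\otimes R_a)\subseteq\sum_{j=1}^{m-1}V^{(j)}\otimes R_a\otimes V^{(m-j)}$ (taking $j=m-1$) intersected with $V^{(m)}\otimes R_a$, hence lies in the second summand on the right; so the required inclusion holds. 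I expect the main obstacle to be the bookkeeping in the inductive step of the first implication — keeping track of which tensor leg is being factored out and correctly invoking the second clause of Lemma \ref{lem:trivialspace} at each stage — rather than any conceptual difficulty; everything else is a direct manipulation of the definitions together with \eqref{eq:econa} and \eqref{eq:ec}.
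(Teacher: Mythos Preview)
Your argument is correct and is essentially the standard proof that the paper defers to (the paper simply cites \cite{B2}, Prop.~2.5, applied separately to each degree $a$ and $b$). A couple of minor remarks: in the inductive step of the first implication, your phrase ``each term \dots\ lies in \dots'' is slightly imprecise---what you actually use is that the whole second summand factors as $V\otimes\big[(V^{(m-1)}\otimes R_a)\cap\sum_{j'=0}^{m-2}V^{(j')}\otimes R_a\otimes V^{(m-1-j')}\big]$ by flatness of tensoring with $V$ over a field, after which the inductive hypothesis applies; and your opening announcement about ``quoting a classical fact about distributive lattices of three subspaces'' never materializes (nor is it needed), so you could drop it.
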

%%%%%%%
\begin{proof}
The proof is the same (for each degree $a$ and $b$) as the one given for homogeneous algebras in \cite{B2}, Prop. 2.5. 
\end{proof}

%%%%%%%
\begin{lemma}
\label{lem:equivalenceforincl}
For $2\le m\le a-1$ and $2\le h\le b-1$, the inclusions 
\begin{align*}
(V^{(m)}\otimes R_a)\cap (R_a\otimes V^{(m)}) &\subseteq V^{(m-1)}\otimes R_a\otimes V,
\\
(V^{(h)}\otimes R_b)\cap (R_b\otimes V^{(h)}) &\subseteq V^{(h-1)}\otimes R_b\otimes V
\end{align*}
hold if and only if the equalities 
\[
(V^{(m)}\otimes R_a)\cap (R_a\otimes V^{(m)})=\bar{J}_{a+m}^a, \hskip 0.2cm (V^{(h)}\otimes R_b)\cap (R_b\otimes V^{(h)})=\bar{J}_{b+h}^b
\]
are satisfied. 
\end{lemma}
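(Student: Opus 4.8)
The plan is to reduce everything to two elementary facts: that the functor $(\place)\otimes V$ is exact (so in particular it preserves finite intersections of subspaces), and the definitions of $\bar{J}_{a+m}^{a}$ and $\bar{J}_{b+h}^{b}$ as iterated intersections. I write $X_{m} = (V^{(m)}\otimes R_{a})\cap(R_{a}\otimes V^{(m)})$; by symmetry it suffices to establish the assertion for $a$ and $m$, since the argument for $b$ and $h$ is word for word the same. The observation underlying both implications is that $X_{m}$ always contains $\bar{J}_{a+m}^{a} = \bigcap_{j=0}^{m} V^{(j)}\otimes R_{a}\otimes V^{(m-j)}$, because $X_{m}$ is exactly the intersection of the two extreme terms of this family, namely the ones for $j=0$ and $j=m$.

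For the ``if'' direction, assume $X_{m}=\bar{J}_{a+m}^{a}$. Since $m\geq 2$, the index $m-1$ lies in $\{0,\dots,m\}$, so $\bar{J}_{a+m}^{a}$ is contained in the term of the intersection corresponding to $j=m-1$, which is $V^{(m-1)}\otimes R_{a}\otimes V^{(1)} = V^{(m-1)}\otimes R_{a}\otimes V$; this is precisely the inclusion we want.

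For the ``only if'' direction, I would prove by induction on $m$ the slightly stronger assertion that if the stated inclusions hold at all levels up to $m$ then $X_{m}=\bar{J}_{a+m}^{a}$, for $1\leq m\leq a-1$. The case $m=1$ is just the definition $\bar{J}_{a+1}^{a}=(R_{a}\otimes V)\cap(V\otimes R_{a})$ and needs no hypothesis. For the step, with $2\leq m\leq a-1$, I would combine $X_{m}\subseteq R_{a}\otimes V^{(m)}=(R_{a}\otimes V^{(m-1)})\otimes V$ with the hypothesis $X_{m}\subseteq V^{(m-1)}\otimes R_{a}\otimes V=(V^{(m-1)}\otimes R_{a})\otimes V$; since $(\place)\otimes V$ preserves intersections, this yields $X_{m}\subseteq\big[(R_{a}\otimes V^{(m-1)})\cap(V^{(m-1)}\otimes R_{a})\big]\otimes V=X_{m-1}\otimes V$. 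By the inductive hypothesis $X_{m-1}=\bar{J}_{a+m-1}^{a}$, so $X_{m}\subseteq\bar{J}_{a+m-1}^{a}\otimes V=\bigcap_{j=0}^{m-1}V^{(j)}\otimes R_{a}\otimes V^{(m-j)}$. Finally, intersecting with the always-valid inclusion $X_{m}\subseteq V^{(m)}\otimes R_{a}$ (the missing $j=m$ term) gives $X_{m}\subseteq\bigcap_{j=0}^{m}V^{(j)}\otimes R_{a}\otimes V^{(m-j)}=\bar{J}_{a+m}^{a}$, and together with the reverse inclusion from the first paragraph this is the equality $X_{m}=\bar{J}_{a+m}^{a}$. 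Running the identical computation with $R_{b}$, $b$, $h$ in place of $R_{a}$, $a$, $m$ completes the argument.

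I do not expect a genuine obstacle here: the proof is essentially bookkeeping with intersections of tensor powers of $V$. The one point that requires a bit of care is recognizing that the stated inclusion at level $m$ forces the recursion $X_{m}\subseteq X_{m-1}\otimes V$, which is exactly the mechanism that lets the induction propagate; one should also keep in mind that the index ranges for $m$ and for $h$ may be empty (for instance the $m$-range when $a=2$), in which case the corresponding half of the statement is vacuous.
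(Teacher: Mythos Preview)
Your proof is correct. The paper does not give its own argument here but simply cites \cite{B2}, Lemma 2.6; your inductive proof via the recursion $X_{m}\subseteq X_{m-1}\otimes V$ is precisely the standard argument one finds there, so the approaches coincide.
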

%%%%%%%
\begin{proof}
The proof is the same (for each degree $a$ and $b$) as the one given for homogeneous algebras in \cite{B2}, Lemma 2.6. 
\end{proof}

%%%%%%%
\begin{definition}
\label{def:multdisttriple}
Given $t,t'\in \mathbb{N}$, a tuple $(E, E',$ $ F_1,\cdots ,F_t, G_1, \cdots ,G_{t'})$ of subspaces of a given vector space is said \emph{$(t,t')$-bidistributive} if $E\cap E'=0$ and
\[
(E\oplus E')\cap \Big(\sum\limits_{j=1}^{t} F_j + \sum\limits_{j'=1}^{t'} G_{j'}\Big)
= \sum\limits_{j=1}^{t} (E\cap F_{j}) \oplus \sum\limits_{j'=1}^{t'} (E'\cap G_{j'}).
\]
The last identity may be replaced by the equivalent condition given by the inclusion of the left-hand member in the right one. 
Note further that $(E, E', F_1, \cdots , F_t, G_1, \cdots , G_{t'})$ is $(t,t')$-bidistributive 
if and only if $(E', E, G_1, \cdots ,$ $G_{t'}, F_1, \cdots , F_t)$ is $(t',t)$-bidistributive.
\end{definition}
%%%%%%%

We have the following simple characterization of the bidistributivity property.
%%%%%%%
\begin{lemma}
\label{lem:crossedincl}
Given a sequence $(E, E',F_1, \cdots , F_t, G_1, \cdots , G_{t'})$ of subspaces of a fixed vector space satisfying that $E \cap E' = 0$, it is $(t,t')$-bidistributive if and only if the triple 
$(\sum_{j=1}^{t} F_{j} + \sum_{j'=1}^{t'} G_{j'},E,E')$ and the sequences $(E, F_1, \cdots , F_{t}, G_1, \cdots , G_{t'})$ and $(E',F_{1}, \cdots , F_{t}, G_{1}, \cdots , G_{t'})$ are distributive, 
and there are inclusions
\[     \sum_{j'=1}^{t'} (E \cap G_{j'}) \subseteq \sum_{j=1}^{t} (E \cap F_j), 
\hskip 0.5cm
\sum_{j=1}^{t} (E' \cap F_j) \subseteq \sum_{j'=1}^{t'} (E' \cap G_{j'}).
\]
\end{lemma}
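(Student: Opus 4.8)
The plan is to reduce the bidistributivity identity to three ordinary distributivity statements plus two "crossed" inclusions by a direct elementwise argument, exploiting the hypothesis $E \cap E' = 0$. First I would observe that, since $E \cap E' = 0$, an element of $(E \oplus E') \cap (\sum_j F_j + \sum_{j'} G_{j'})$ can be uniquely written as $e + e'$ with $e \in E$, $e' \in E'$, and we must understand when such a sum automatically lies in $\sum_j (E \cap F_j) \oplus \sum_{j'} (E' \cap G_{j'})$. The key point is that the distributivity of the triple $(\sum_j F_j + \sum_{j'} G_{j'}, E, E')$ says precisely that $(E \oplus E') \cap (\sum_j F_j + \sum_{j'} G_{j'}) = \big(E \cap (\sum_j F_j + \sum_{j'} G_{j'})\big) \oplus \big(E' \cap (\sum_j F_j + \sum_{j'} G_{j'})\big)$, so it lets us treat the $E$-part and the $E'$-part separately.

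Next I would handle the $E$-part. Distributivity of $(E, F_1, \dots, F_t, G_1, \dots, G_{t'})$ gives $E \cap (\sum_j F_j + \sum_{j'} G_{j'}) = \sum_j (E \cap F_j) + \sum_{j'} (E \cap G_{j'})$, and the crossed inclusion $\sum_{j'} (E \cap G_{j'}) \subseteq \sum_j (E \cap F_j)$ collapses this to $\sum_j (E \cap F_j)$, which is exactly the $E$-component that appears on the right-hand side of the bidistributivity identity. Symmetrically, distributivity of $(E', F_1, \dots, F_t, G_1, \dots, G_{t'})$ together with $\sum_j (E' \cap F_j) \subseteq \sum_{j'} (E' \cap G_{j'})$ yields $E' \cap (\sum_j F_j + \sum_{j'} G_{j'}) = \sum_{j'} (E' \cap G_{j'})$. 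Combining the two parts through the first distributivity statement gives the inclusion $\subseteq$ in the definition of $(t,t')$-bidistributive, and since the reverse inclusion always holds (as noted after Definition \ref{def:multdisttriple}), this proves one direction.

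For the converse I would assume $(E, E', F_1, \dots, F_t, G_1, \dots, G_{t'})$ is $(t,t')$-bidistributive and extract each of the five conclusions. The triple $(\sum_j F_j + \sum_{j'} G_{j'}, E, E')$ is distributive because intersecting the bidistributivity identity's right-hand side, $\sum_j (E \cap F_j) \oplus \sum_{j'} (E' \cap G_{j'})$, with $E$ recovers $\sum_j (E \cap F_j)$ and with $E'$ recovers $\sum_{j'} (E' \cap G_{j'})$ (using $E \cap E' = 0$ to kill cross terms), so the left-hand side decomposes as required. To get distributivity of $(E, F_1, \dots, F_t, G_1, \dots, G_{t'})$: intersect both sides of the bidistributivity identity with $E$; the left side becomes $E \cap (\sum_j F_j + \sum_{j'} G_{j'})$ and the right side becomes $\sum_j (E \cap F_j)$, hence $E \cap (\sum_j F_j + \sum_{j'} G_{j'}) \subseteq \sum_j (E \cap F_j) \subseteq \sum_j (E\cap F_j) + \sum_{j'}(E\cap G_{j'})$, which forces all of these to coincide and in particular gives both the distributivity of $(E, F_1, \dots, G_{t'})$ and the crossed inclusion $\sum_{j'}(E \cap G_{j'}) \subseteq \sum_j (E \cap F_j)$; the $E'$ statements follow symmetrically by intersecting with $E'$. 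The main obstacle I anticipate is purely bookkeeping: keeping the direct-sum decompositions honest and making sure at each step that the hypothesis $E \cap E' = 0$ is genuinely what licenses splitting a sum $e + e'$ into its two components, so that the "intersect with $E$" and "intersect with $E'$" operations behave as claimed; no deep idea is needed beyond that.
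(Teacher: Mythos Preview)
Your proposal is correct and follows essentially the same approach as the paper's proof: intersect the bidistributivity identity with $E$ (resp.\ $E'$), using $E \cap E' = 0$ to isolate each piece, thereby obtaining both the distributivity of the long sequences and the crossed inclusions, and then deduce the triple's distributivity from these. The only differences are organizational: the paper proves the forward direction first and dismisses the converse as ``immediate'', and it derives the triple distributivity last (using the already-established inclusions and sequence distributivities) rather than first.
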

%%%%%%%
\begin{proof}
Suppose that $(E, E',F_1, \cdots , F_t, G_1, \cdots , G_{t'})$ is $(t,t')$-bidistributive.
By the obvious inclusion
\[     E \cap \big(\sum_{j=1}^{t} F_{j} + \sum_{j=1}^{t'} G_{j'}\big) \subseteq (E \oplus E') \cap (\sum_{j=1}^{t} F_{j} + \sum_{j=1}^{t'} G_{j'}),     \]
the bidistributivity property and the fact that $E \cap E' = 0$, we see that 
\[     E \cap \big(\sum_{j=1}^{t} F_{j} + \sum_{j=1}^{t'} G_{j'}\big) \subseteq \sum_{j=1}^{t} (E \cap F_{j}).     \]
Since the right member of the first inclusion is trivially included in $\sum_{j=1}^{t} (E \cap F_{j}) + \sum_{j'=1}^{t'} (E \cap G_{j'})$, we get the distributivity of 
the tuple $(E, F_1, \cdots , F_{t}, G_1, \cdots , G_{t'})$ and also the first of the stated inclusions. 
The distributivity of the other sequence $(E', F_1, \cdots , F_{t}, G_1, \cdots , G_{t'})$ and the second inclusion follow in the same way. 
It remains to prove that the triple $(\sum_{j=1}^{t} F_{j} + \sum_{j'=1}^{t'} G_{j'},E,E')$ is also distributive, which can be shown as follows. 
The $(t,t')$-bidistributive property tells us that 
\[     \big(\sum_{j=1}^{t} F_{j} + \sum_{j'=1}^{t'} G_{j'}\big) \cap (E + E') = \sum_{j = 1}^{t} (F_{j} \cap E) + \sum_{j' = 1}^{t'} (G_{j'} \cap E').     \]
By the inclusions stated in the lemma and the distributivity of the two already analysed sequences, we have that the right member of the previous identity 
coincides with 
\[     \big(\sum_{j=1}^{t} F_{j} + \sum_{j=1}^{t'} G_{j'}\big) \cap E + \big(\sum_{j=1}^{t} F_{j} + \sum_{j=1}^{t'} G_{j'}\big) \cap E',     \]
so the distributivity of the mentioned triple follows. 

The converse is immediate.  
\end{proof}

The following proposition gives an equivalence in terms of the bidistributivity and the \textit{e.c.}, 
to decide when $\Ker(\delta_2)$ is $2$-pure in degrees $a+1$ and $b+1$ satisfying that $k\otimes_A \Ker(\delta_2) \simeq \bar{J}_{a+1}^a\oplus \bar{J}_{b+1}^b$. 
The purpose will be then to generalize this equivalence for the other differentials in the resolution.
%%%%%%%
\begin{proposition}
\label{prop:kerdelta2pure}
The kernel $\Ker(\delta_2)$ is $2$-pure in degrees $a+1$ and $b+1$ satisfying that $k\otimes_A \Ker(\delta_2)\simeq \bar{J}_{a+1}^a\oplus \bar{J}_{b+1}^b$ 
if and only if the \textit{e.c.} 
hold and, for all $n \in \NN_{0}$ (or just for $n > a$), the tuple $(E',E'',F',G',F'',G'')$ is $(2,2)$-bidistributive, where:
\begin{align*}
E' &= V^{(n-a)}\otimes R_a, \,
G' = \sum\limits_{j=n-2a+1}^{n-a-1} V^{(j)}\otimes R_a\otimes V^{(n-a-j)},
\\
F' & = \sum\limits_{j=0}^{n-2a} V^{(j)}\otimes R_a \otimes V^{(n-a-j)} + \sum\limits_{j'=0}^{n-a-b} V^{(j')}\otimes R_b \otimes V^{(n-b-j')},
\\
E'' & = V^{(n-b)}\otimes R_b, \, G'' = \sum\limits_{j=n-2b+1}^{n-b-1} V^{(j)}\otimes R_b\otimes V^{(n-b-j)},
\\
F'' & = \sum\limits_{j=0}^{n-a-b} V^{(j)}\otimes R_a \otimes V^{(n-a-j)}+ \sum\limits_{j'=0}^{n-2b} V^{(j')}\otimes R_b \otimes V^{(n-b-j')}.
\end{align*}
\end{proposition}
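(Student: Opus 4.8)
The plan is to translate the condition ``$\Ker(\delta_2)$ is $2$-pure in degrees $a+1$ and $b+1$ with $k\otimes_A\Ker(\delta_2)\simeq \bar J^a_{a+1}\oplus\bar J^b_{b+1}$'' into a degree-by-degree statement about the homogeneous components $\Ker(\delta_2)_n$, and then to recognize each such statement as either an \textit{e.c.} in low degrees or a bidistributivity condition on the explicit tuple $(E',E'',F',G',F'',G'')$ in higher degrees. First I would fix the description of $\Ker(\delta_2)_n$ recalled just above the statement, namely
\[
\Ker(\delta_2)_n = \frac{(V^{(n-a)}\otimes R_a\oplus V^{(n-b)}\otimes R_b)\cap (I_{n-1}\otimes V)}{I_{n-a}\otimes R_a\oplus I_{n-b}\otimes R_b},
\]
and unpack $I_{n-1}\otimes V$ using the explicit formula for $I$. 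The submodule $A_{n-a-1}\cdot \bar J^a_{a+1}+A_{n-b-1}\cdot\bar J^b_{b+1}$ of $\Ker(\delta_2)_n$ corresponds, after pulling back to $T(V)$, to $V^{(n-a-1)}\otimes\bar J^a_{a+1}+V^{(n-b-1)}\otimes\bar J^b_{b+1}$ modulo the same denominator; purity in degrees $a+1,b+1$ together with the prescribed $k\otimes_A\Ker(\delta_2)$ is exactly the assertion that this submodule is all of $\Ker(\delta_2)$ and that the natural maps $\bar J^a_{a+1}\to k\otimes_A\Ker(\delta_2)$ and $\bar J^b_{b+1}\to k\otimes_A\Ker(\delta_2)$ are injective with disjoint images. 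By Lemma~\ref{lem:biyectiontensor}(ii) (or rather Proposition~\ref{prop:essentialineachdegree}) the second part is automatic once the first part holds, because $\Ker(\delta_2)_{a+1}=\bar J^a_{a+1}$, $\Ker(\delta_2)_{b+1}=\bar J^b_{b+1}$ and $\bar J^b_{b+1}\cap(V^{(b-a)}\otimes\bar J^a_{a+1})=0$ by minimality; so the whole content is the equality of graded vector spaces
\[
\Ker(\delta_2)_n = A_{n-a-1}\cdot\bar J^a_{a+1}+A_{n-b-1}\cdot\bar J^b_{b+1}\quad\text{for all }n.
\]

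Next I would split the range of $n$. For $a+1\le n\le 2a-1$ (where only $R_a$ intervenes and the $R_b$-part is not yet present or decouples) and for $b+1\le n\le 2b-1$ the required equality reduces, via the already-established equivalences, to the relations~\eqref{eq:ec} with $l\le a-1$, $h\le b-1$; by the reductions stated in the text these follow from the cases $l=a-1$, $h=b-1$, i.e.\ precisely from the \textit{e.c.} So the \textit{e.c.}\ part of the statement comes out of the low-degree analysis essentially for free, invoking Lemma~\ref{lem:trivialspace} for the descent from $m$ to smaller $m$. For the remaining degrees one rewrites, using the explicit presentation of $I$, the numerator $(V^{(n-a)}\otimes R_a\oplus V^{(n-b)}\otimes R_b)\cap(I_{n-1}\otimes V)$ by separating in $I_{n-1}\otimes V=\sum_j V^{(j)}\otimes R_a\otimes V^{(n-1-j)}\otimes V+\sum_h V^{(h)}\otimes R_b\otimes V^{(n-1-h)}\otimes V$ the summands that already lie in $V^{(n-a-1)}\otimes\bar J^a_{a+1}+V^{(n-b-1)}\otimes\bar J^b_{b+1}$ (these are the ones indexed with $j\le n-2a$ or $h\le n-2b$, contributing $F'$ on the $R_a$-side and $F''$ on the $R_b$-side) from the ``dangerous'' tail summands (indices $n-2a+1\le j\le n-a-1$, $n-2b+1\le h\le n-b-1$, contributing $G'$ and $G''$). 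With $E'=V^{(n-a)}\otimes R_a$, $E''=V^{(n-b)}\otimes R_b$ one checks $E'\cap E''=0$ by minimality, and then
\[
(E'\oplus E'')\cap(F'+G'+F''+G'') = \text{numerator of }\Ker(\delta_2)_n,
\]
while $(E'\cap F')\oplus(E''\cap G'')$ — after carefully matching the index ranges and using Lemma~\ref{lem:equivalenceforincl} to identify $(V^{(n-a)}\otimes R_a)\cap(R_a\otimes V^{(n-a)})$ with $\bar J^a_n$ — is exactly $V^{(n-a-1)}\otimes\bar J^a_{a+1}+V^{(n-b-1)}\otimes\bar J^b_{b+1}$, i.e.\ the pullback of the submodule $A_{n-a-1}\cdot\bar J^a_{a+1}+A_{n-b-1}\cdot\bar J^b_{b+1}$. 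Hence the equality of these two graded vector spaces for all $n$ is literally the $(2,2)$-bidistributivity of $(E',E'',F',G',F'',G'')$, and Lemma~\ref{lem:crossedincl} can be used to reorganize the verification into distributivity of three ordinary tuples plus two crossed inclusions if a cleaner bookkeeping is desired. Finally I would note that the condition for $n>a$ suffices, since for $n\le a$ one has $\Ker(\delta_2)_n=0$ trivially.

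The main obstacle I expect is the careful accounting of index ranges when splitting $I_{n-1}\otimes V$ and matching the ``already-in-the-submodule'' part with $E'\cap F'$ and $E''\cap G''$ rather than some other combination: one must use the identities $\bar J^a_{a+m}=(V^{(m)}\otimes R_a)\cap(R_a\otimes V^{(m)})$ (valid once \textit{e.c.}\ hold, by Lemma~\ref{lem:equivalenceforincl}) to see that $E'\cap F'$ absorbs exactly the $j\le n-2a$ summands plus the contribution $V^{(n-a-b)}\otimes\bar J^b_{b+1}$-type terms coming from the $R_b$-part of $F'$, and symmetrically for $E''\cap G''$; getting the roles of $F',G'$ versus $F'',G''$ the right way around (they are not symmetric in $a\leftrightarrow b$ because $a<b$) is where the real care lies. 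Everything else is a routine combination of Lemma~\ref{lem:trivialspace}, the minimality condition~\eqref{eq:minimality}, and the characterizations already proved in Proposition~\ref{prop:eciffdistributivity} and Lemmas~\ref{lem:equivalenceforincl} and~\ref{lem:crossedincl}, so the proof is essentially a reduction, degree by degree, of the module-theoretic purity statement to the stated lattice-theoretic conditions. The author's proof will almost certainly follow Berger's template from \cite{B2}, Section 2, adapted to keep track of the two degrees simultaneously.
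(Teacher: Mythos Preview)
Your overall plan is the paper's: write $\Ker(\delta_2)_n$ as a quotient with numerator $N_n=(E'\oplus E'')\cap(F'+G'+F''+G'')$, identify the four individual intersections, and recognize the desired equality as the $(2,2)$-bidistributivity identity. However, your bookkeeping of the four intersections is wrong, and this is not just a typo but an actual misidentification.

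Concretely, $F'=I_{n-a}\otimes V^{(a)}$ and $F''=I_{n-b}\otimes V^{(b)}$, so by Lemma~\ref{lem:trivialspace} one has
\[
E'\cap F'=I_{n-a}\otimes R_a,\qquad E''\cap F''=I_{n-b}\otimes R_b,
\]
which are precisely the \emph{denominator} terms, not the $\bar J$-terms. The $\bar J$-contributions come from the ``dangerous'' tails:
\[
E'\cap G'=V^{(n-a-1)}\otimes\bar J^a_{a+1},\qquad E''\cap G''=V^{(n-b-1)}\otimes\bar J^b_{b+1},
\]
and these identities are exactly the \textit{e.c.}\ \eqref{eq:ec}, not Lemma~\ref{lem:equivalenceforincl} (which produces $\bar J^a_n$, a different object). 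Bidistributivity then reads
\[
N_n=(E'\cap F'+E'\cap G')\oplus(E''\cap F''+E''\cap G''),
\]
i.e.\ all four terms; your expression $(E'\cap F')\oplus(E''\cap G'')$ mixes one denominator piece with one $\bar J$-piece and cannot equal the pullback of $A_{n-a-1}\cdot\bar J^a_{a+1}+A_{n-b-1}\cdot\bar J^b_{b+1}$, which is $V^{(n-a-1)}\otimes\bar J^a_{a+1}+V^{(n-b-1)}\otimes\bar J^b_{b+1}$ \emph{plus} the full denominator $I_{n-a}\otimes R_a\oplus I_{n-b}\otimes R_b$. Once you correct these identifications, both directions go through exactly as you sketched.

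One further remark on the ``only if'' direction for the \textit{e.c.}: your low-degree argument for the $R_a$-condition is fine when $n<b$, but for the $R_b$-condition you must work in degrees $b+2\le n\le 2b-1$, where $R_a$ is always present and does not obviously decouple. The paper sidesteps this by noting that the hypothesis gives exactness of $A\otimes\bar J^s_{s+1}\to A\otimes R_s\to A\otimes V$ for each $s$ separately, then applies Lemma~\ref{lemma:nakacons} to descend to $T(V)/\langle R_s\rangle$ and invokes Berger's homogeneous result (\cite{B2}, Prop.~2.7). This is cleaner than trying to isolate the $R_b$-part by hand inside the mixed kernel.
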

%%%%%%%
\begin{proof}
We recall that $V^{(j)}=0$ for $j<0$. 
Notice also that, for $n < b$, the bidistributivity of $(E',E'',F',G',F'',G'')$ reduces to the distributivity of the triple 
$(E',F',G')$, where on each subspace all the summands with $R_{b}$ vanish by (tensor) degree reasons, 
giving thus similar expressions to the corresponding ones found in \cite{B2}, Prop. 2.7. 

We shall first prove the ``if'' part of the statement.  
Note that we have already showed that
\[
(\Ker(\delta_2))_n =A_{m-1} \cdot \bar{J}_{a+1}^a \hskip 0.2cm \text{for} \hskip 0.2cm n=m+a, \hskip 0.2cm \text{with} \hskip 0.2cm 2 \leq m \leq \min \{a-1,b-a\},
\]
if and only if \eqref{eq:econa} is satisfied. 
Fix an integer $n \geq \min\{a,b-a+1\}$, even though the proof also applies to arbitrary $n$, and suppose that the \textit{e.c.} and the bidistributivity condition on the previous tuple hold. 
We will use the explicit description of $(\Ker(\delta_2))_n$ for arbitrary $n$ given at the beginning of this subsection.

We have that $(\Ker(\delta_2))_n\subseteq A_{n-a}\otimes R_a\oplus A_{n-b}\otimes R_b$ and the subspace
\begin{align*}
N_n = & (V^{(n-a)}\otimes R_a \oplus V^{(n-b)}\otimes R_b)\cap 
\\
& \big( \sum\limits_{j=0}^{n-a-1} V^{(j)} \otimes R_a\otimes V^{(n-a-j)} + \sum\limits_{j'=0}^{n-b-1} 
 V^{(j')}\otimes R_b \otimes V^{(n-b-j')} \big)
\end{align*}
satisfies that
\[
(\Ker(\delta_2))_n = \frac{N_n}{I_{n-a}\otimes R_a \oplus I_{n-b}\otimes R_b}.
\]
Note that $(E'\oplus E'')\cap (F'+F''+G'+G'') =N_n$, and 
\begin{equation}
\begin{split}
\label{eq:saledeec}
E'\cap F' &= I_{n-a}\otimes R_a, \hskip 0.2cm E''\cap F'' = I_{n-b}\otimes R_b,
\\
E'\cap G' &= V^{(n-a-1)}\otimes \bar{J}_{a+1}^a, \hskip 0.2cm E''\cap G'' = V^{(n-b-1)}\otimes \bar{J}_{b+1}^b,
\end{split}
\end{equation}
where the first two equations are always satisfied and last two hold because of the \textit{e.c.} 
The kernel $(\Ker(\delta_2))_n$ is then given by 
\begin{align*}
& \frac{N_n}{I_{n-a}\otimes R_a \oplus I_{n-b} \otimes R_b} 
\\
& = \frac{(I_{n-a}\otimes R_a + V^{(n-a-1)}\otimes \bar{J}_{a+1}^a) \oplus (I_{n-b}\otimes R_b + V^{(n-b-1)}\otimes \bar{J}_{b+1}^b)}{I_{n-a}\otimes R_a\oplus I_{n-b}\otimes R_b} 
\\
& \simeq \frac{I_{n-a}\otimes R_a + V^{(n-a-1)}\otimes \bar{J}_{a+1}^a}{I_{n-a}\otimes R_a} \oplus \frac{I_{n-b}\otimes R_b + V^{(n-b-1)}\otimes \bar{J}_{b+1}^b}{I_{n-b}\otimes R_b}  
\\
& \simeq \frac{V^{(n-a-1)}\otimes \bar{J}_{a+1}^a}{(V^{(n-a-1)}\otimes \bar{J}_{a+1}^a)\cap (I_{n-a}\otimes R_a)}\oplus \frac{V^{(n-b-1)} \otimes \bar{J}_{b+1}^b}{(V^{(n-b-1)} \otimes \bar{J}_{b+1}^b) \cap (I_{n-b}\otimes R_b)}, 
\end{align*}
which is an epimorphic image of 
\[     \frac{V^{(n-a-1)}\otimes \bar{J}_{a+1}^a}{I_{n-a-1}\otimes \bar{J}_{a+1}^a} \oplus \frac{V^{(n-b-1)}\otimes \bar{J}_{b+1}^b}{I_{n-b-1}\otimes \bar{J}_{b+1}^b}\simeq A_{n-a-1} \otimes \bar{J}_{a+1}^a \oplus A_{n-b-1} \otimes \bar{J}_{b+1}^b.     \]
This follows from the obvious inclusion $I_{n-s-1}\otimes \bar{J}_{s+1}^s \subseteq (V^{(n-s-1)} \otimes \bar{J}_{s+1}^s) \cap (I_{n-s}\otimes R_s)$, for $s=a,b$.
We then conclude that $\Ker(\delta_2)$ is $2$-pure in degrees $a+1$ and $b+1$, satisfying that $k\otimes_A\Ker(\delta_2)\simeq \bar{J}_{a+1}^a\oplus \bar{J}_{b+1}^b$.

Conversely, we assume now that $\Ker(\delta_2)$ is $2$-pure in degrees $a+1$ and $b+1$ such that $k\otimes_A\Ker(\delta_2) \simeq \bar{J}_{a+1}^a\oplus \bar{J}_{b+1}^b$, 
so $(\Ker(\delta_2))_n= A_{n-a-1} \cdot \bar{J}_{a+1}^a \oplus A_{n-b-1} \cdot \bar{J}_{b+1}^b$. 
This implies the existence of the following exact sequence  
\[     A \otimes \bar{J}_{s+1}^{s} \rightarrow A \otimes R_{s} \rightarrow A \otimes V,     \]
for $s = a, b$, where the last map is induced by $\delta_{2}$, and the first one is easily seen to be induced by the differential $\delta_{3}$ 
given in the Definition \ref{def:multikoszul} of the multi-Koszul complex of $A$. 
By Lemma \ref{lemma:nakacons}, this implies that the following sequence 
\[     T(V)/\cl{R_{s}} \otimes \bar{J}_{s+1}^{s} \overset{\delta_{3}^{s}}{\rightarrow} T(V)/\cl{R_{s}} \otimes R_{s} \overset{\delta_{2}^{s}}{\rightarrow}  T(V)/\cl{R_{s}} \otimes V     \]
is also exact. 
We note that the morphisms of the previous sequence are those of the Koszul complex of $T(V)/\cl{R_{s}}$. 
This implies that $\Ker(\delta_{2}^{s})$ is pure in degree $s+1$, so, by \cite{B2}, Prop. 2.7, the \textit{e.c.} corresponding to $s = a, b$ holds.
 
We will now prove the bidistributive condition of the tuples in the statement. 
In order to do so, consider the following commutative diagram
\[
\xymatrix@C-50pt
{
\displaystyle{\bigoplus\limits_{s=a,b} \frac{V^{(n-s-1)}\otimes \bar{J}_{s+1}^s}{I_{n-s-1}\otimes \bar{J}_{s+1}^s}}
\ar@{->>}[rr]
\ar@{->>}[rdd]^{g}
& 
&
\displaystyle{\bigoplus\limits_{s=a,b} \frac{V^{(n-s-1)}\otimes \bar{J}_{s+1}^s+I_{n-s}\otimes R_s}{I_{n-s}\otimes R_s}}
\ar@{_{(}->}[ldd]^{f}
\\
&
&
\\
&
\displaystyle{\frac{(\bigoplus\limits_{s=a,b} V^{(n-s)}\otimes R_s)\cap (I_{n-1}\otimes V)}{\bigoplus\limits_{s=a,b} I_{n-s}\otimes R_s}}
& 
}
\]
The domain of $g$ is the homogeneous component of degree $n$ of $A \otimes (\bar{J}_{a+1}^{a} \oplus \bar{J}_{b+1}^{b})$, its codomain is 
the component of degree $n$ of $\Ker(\delta_{2})$ and $g$ is the homogeneous component of the projective cover, so surjective by the hypotheses. 
The horizontal epimorphism follows from the trivial inclusions $I_{n-s-1}\otimes \bar{J}_{s+1}^s \subseteq (V^{(n-s-1)}\otimes \bar{J}_{s+1}^s)\cap (I_{n-s}\otimes R_s)$ 
for $s=a, b$, and the map $f$ is just the canonical inclusion. 
The commutativity of the diagram just follows from the construction of the projective cover of a module. 
The surjectivity of $g$ yields the surjectivity of $f$, so it gives an equality between the domain of $f$ and its codomain. 
By the \textit{e.c.}, we have that the last two identities of \eqref{eq:saledeec} hold, which together with the equality coming from $f$, 
imply the $(2,2)$-bidistributivity of the respective tuple. 
The proposition is thus proved. 
\end{proof}

%%%%%%%
\begin{remark}
Notice that the spaces of the tuple considered in the previous proposition depend on $n$. 
We will however omit the index to simplify the notation, as in \cite{B2}. 
\end{remark}
%%%%%%%

\begin{remark}
\label{rem:isoind2}
We would like to stress that, contrary to what happens in Proposition 2.7 in \cite{B2} for the case of homogeneous algebras, 
the hypothesis $k \otimes_{A} \Ker(\delta_{2}) \simeq \bar{J}_{a+1}^{a} \oplus \bar{J}_{b+1}^{b}$ cannot be replaced by the weaker condition 
$\Ker(\delta_{2})$ is $2$-pure in degrees $a+1$ and $b+1$ (see the algebra considered in Example \ref{ex:difkos}). 

On the other hand, even though we may understand the condition $k \otimes_{A} \Ker(\delta_{2}) \simeq \bar{J}_{a+1}^{a} \oplus \bar{J}_{b+1}^{b}$ 
as stating that the obvious morphism from $\bar{J}_{a+1}^{a} \oplus \bar{J}_{b+1}^{b}$ to $k \otimes_{A} \Ker(\delta_{2})$ is an isomorphism, 
Lemma \ref{lemma:tresimp} tells us that we may consider that it states the existence of any isomorphism between both spaces, for the vector spaces are finite dimensional. 
\end{remark}
%%%%%%%

\begin{remark}
\label{rem:incimp}
By Lemma \ref{lem:crossedincl}, the bidistributivity condition on the previous proposition implies the following inclusions 
\begin{small}
\begin{align*}
    (V^{(m_{a})} \otimes R_{a}) \cap \Big(\sum_{j=0}^{m_{a}+a-b-1} V^{(j)} \otimes R_{b} \otimes V^{(m_{a}+a-b-j)}\Big) &\subseteq V^{(m_{a}-1)} \otimes \bar{J}_{a+1}^{a} + I_{m_{a}} \otimes R_{a},
    \\
   (V^{(m_{b})} \otimes R_{b}) \cap \Big(\sum_{j=0}^{m_{b}+b-a-1} V^{(j)} \otimes R_{a} \otimes V^{(m_{b}+b-a-j)}\Big) &\subseteq V^{(m_{b}-1)} \otimes \bar{J}_{b+1}^{b},
\end{align*}
\end{small}
for all $m_{a}, m_{b} \in \NN_{0}$ satisfying that $b-a+1 \leq m_{a} \leq b-1$, and $1 \leq m_{b} \leq a - 1$. 
Notice that the last inclusion would seem more symmetric if we had also written the subspace $I_{m_{b}} \otimes R_{a}$, which vanishes because $I_{m_{b}} = 0$. 
Furthermore, it is easily seen that the previous inclusions in fact imply that 
\begin{align*}
E' \cap F'' \subseteq E' \cap F' + E' \cap G', \hskip 0.5cm  &  E' \cap G'' \subseteq E' \cap F' + E' \cap G',
\\
E'' \cap F' \subseteq E'' \cap F'' + E'' \cap G'', \hskip 0.5cm  &  E'' \cap G' \subseteq E'' \cap F'' + E'' \cap G'',
\end{align*}
for the vector spaces $E'$, $E''$, $F'$, $F''$, $G'$ and $G''$ defined in the previous proposition. 
\end{remark}
%%%%%%%

We end this subsection by stating a result which is analogous to the one existing for homogeneous algebras.
%%%%%%%
\begin{proposition}
The \textit{e.c.} hold for $A^{\circ}$ if and only if $A$ satisfies its \textit{e.c.} 
\end{proposition}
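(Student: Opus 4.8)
The plan is to deduce the statement from the characterization of the \textit{e.c.} given in Proposition \ref{prop:eciffdistributivity} and Lemma \ref{lem:equivalenceforincl}, transporting everything through the word-reversal anti-automorphism $\tau$ of $T(V)$. First I would record the basic properties of $\tau$: it is an involution, it restricts to a linear automorphism of each homogeneous component $V^{(n)}$, it satisfies $\tau(V^{(j)} \otimes W \otimes V^{(l)}) = V^{(l)} \otimes \tau(W) \otimes V^{(j)}$ for any subspace $W$ of a tensor power, and, being bijective on each $V^{(n)}$, it commutes with finite sums and intersections of subspaces of a fixed $V^{(n)}$. Since $\tau(R) = \tau(R_a) \oplus \tau(R_b)$ is the space of relations of $A^{\circ}$ and satisfies the minimality condition (as already noted before Corollary \ref{prop:koszulitydual}), a trivial reindexing of the intersection defining $\bar{J}_{n}^{a}$ gives $\tau(\bar{J}_{n}^{a}) = \bar{J}_{n}^{a,\circ}$ and likewise $\tau(\bar{J}_{n}^{b}) = \bar{J}_{n}^{b,\circ}$, where the right-hand sides denote the analogous vector spaces built from $\tau(R_a)$ and $\tau(R_b)$.

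Next I would invoke Proposition \ref{prop:eciffdistributivity} together with Lemma \ref{lem:equivalenceforincl} to restate the \textit{e.c.} for $A$ purely in terms of the distributivity of the triples $(V^{(m)}\otimes R_a,\, R_a\otimes V^{(m)},\, \sum_{j=1}^{m-1} V^{(j)}\otimes R_a\otimes V^{(m-j)})$ and its $R_b$-analogue, for $2 \le m \le a-1$ and $2 \le h \le b-1$, and of the equalities $(V^{(m)}\otimes R_a) \cap (R_a\otimes V^{(m)}) = \bar{J}_{a+m}^{a}$ and its $R_b$-analogue. Applying $\tau$ to each of these conditions and using the properties above, each is turned into the corresponding condition written with $\tau(R_a)$ and $\tau(R_b)$ in place of $R_a$ and $R_b$, except that in the transported triples the first two entries appear in the opposite order (one checks $\tau(V^{(m)}\otimes R_a) = \tau(R_a)\otimes V^{(m)}$ and $\tau(R_a\otimes V^{(m)}) = V^{(m)}\otimes\tau(R_a)$); since $\tau$ is a bijection on each $V^{(n)}$, an original condition holds if and only if its image does. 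Finally, Proposition \ref{prop:eciffdistributivity} and Lemma \ref{lem:equivalenceforincl} applied to $A^{\circ}$ identify the resulting collection of conditions with the \textit{e.c.} for $A^{\circ}$, which concludes the argument.

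The one point that needs care---and which I would isolate as a short elementary observation---is the swap: for subspaces $E, E', F$ of a vector space, the triple $(E, E', F)$ is distributive if and only if $(E', E, F)$ is. This is immediate and does not require appealing to modularity of the subspace lattice as a black box: if $E \cap (E'+F) = (E\cap E') + (E\cap F)$ and $x = e+f \in E'\cap(E+F)$ with $e\in E$ and $f\in F$, then $e = x-f \in E\cap(E'+F)$, so $e = e_1 + e_2$ with $e_1 \in E\cap E'$ and $e_2 \in E\cap F$, and then $x = e_1 + (e_2+f)$ exhibits $x$ in $(E'\cap E) + (E'\cap F)$ since $e_2 + f = x - e_1 \in E'$; the reverse inclusion is clear, and the converse implication follows by exchanging the roles of $E$ and $E'$. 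I expect this little lemma to be the only genuine subtlety; the remainder of the proof is bookkeeping with $\tau$, reindexing of sums, and citing the already-established equivalences. (One could instead try to manipulate the defining equations of the \textit{e.c.} directly, but that route seems to force one back to precisely this left--right symmetry, so passing through Proposition \ref{prop:eciffdistributivity} looks like the cleanest path.)
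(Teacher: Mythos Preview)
Your argument is correct and complete. The paper's own proof is simply a one-line reference to \cite{B2}, Prop.~4.4, applied separately in degrees $a$ and $b$; your approach through Proposition~\ref{prop:eciffdistributivity} and Lemma~\ref{lem:equivalenceforincl}, transporting the symmetric characterization via $\tau$ and isolating the swap $(E,E',F)\leftrightarrow(E',E,F)$ as the only nontrivial step, is precisely the argument that reference unpacks to, so the two are essentially the same.
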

%%%%%%%
\begin{proof}
The proof is the same (for each degree $a$ and $b$) as the one given for homogeneous algebras in \cite{B2}, Prop. 4.4.  
\end{proof}

%%%%%%%%%%%%%%%%%%%%%%%%%%%%%%%%%%%%%%%%%%%%%%%%%%%%%%%%%%%%%%%%%%%%%%%%%%%%%%%%%%%%%%%%%%%%%%%%%%%%%%%%%%%%%%%%%%%%%%%%%%%%%%%%%%%%%%%%%%%%%%%%%%%%%%%%

\subsection{\texorpdfstring{Description of $\Ker(\delta_i)$ for $i>2$}{Description of ker(deltai) for i>2}}
\label{sec:kerdeltaimay2}

From now on we assume that $\Ker(\delta_2)$ is $2$-pure in degrees $a+1$ and $b+1$, with $k \otimes_{A} \Ker(\delta_2) \simeq \bar{J}_{a+1}^{a} \oplus \bar{J}_{b+1}^{b}$. 
Moreover, consider as before $\delta_0$ to be the augmentation of the algebra and $\delta_1:A\otimes V\rightarrow A$ to be the restriction of the product of the algebra. 
Given $i \geq 3$, suppose that $\delta_2, \cdots ,\delta_{i-1}$ have been defined in such a way that the canonical injections
\[     \tilde{g}_j : \bar{J}_{n_a(j)}^a\oplus \bar{J}_{n_b(j)}^b \rightarrow \Ker(\delta_{j-1}) \subseteq A \otimes (\bar{J}_{n_a(j-1)}^a\oplus \bar{J}_{n_b(j-1)}^b),\text{ for all } 2\le j\le i,     \]
induce essential surjections $g_j:A\otimes (\bar{J}_{n_a(j)}^a \oplus \bar{J}_{n_b(j)}^b) \rightarrow \Ker(\delta_{j-1})$, whose coextensions 
$A\otimes (\bar{J}_{n_a(j)}^a \oplus \bar{J}_{n_b(j)}^b) \rightarrow A\otimes (\bar{J}_{n_a(j-1)}^a \oplus \bar{J}_{n_b(j-1)}^b)$ are the differentials $\delta_j$. 
Then $\delta_i : A \otimes (\bar{J}_{n_a(i)}^a \oplus \bar{J}_{n_b(i)}^b) \rightarrow A\otimes (\bar{J}_{n_a(i-1)}^a \oplus \bar{J}_{n_b(i-1)}^b)$ 
may be analogously defined as the coextension of $g_i$. 
Furthermore, the recursive procedure yields that, if $i > 2$, 
\[
\delta_i : A\otimes (\bar{J}_{n_a(i)}^a \oplus \bar{J}_{n_b(i)}^b) \longrightarrow A\otimes (\bar{J}_{n_a(i-1)}^a \oplus \bar{J}_{n_b(i-1)}^b)
\]
is a direct sum of two components $\delta_i^{a} \oplus \delta_i^{b}$, where $\delta_i^{s} : A \otimes \bar{J}_{n_s(i)}^s \rightarrow A \otimes \bar{J}_{n_s(i-1)}^s$, for $s=a,b$, 
because each of the morphisms $\tilde{g}_{i}$ satisfies the same property, is induced by the map  
\[     
\alpha \otimes v_{j_1}\cdots v_{j_{n_s(i)}} \mapsto \begin{cases}
\alpha v_{j_1}\cdots v_{j_{s-1}} \otimes v_{j_s}\cdots v_{j_{n_s(i)}}, & \text{if $i$ is even,}
\\
\alpha v_{j_1} \otimes v_{j_2}\cdots v_{j_{n_s(i)}}, & \text{if $i$ is odd.}
\end{cases}
\]

%%%%%%%
\begin{remark}
\label{rmk:gooddefinition}
Note that these morphisms are well-defined even considering their domains to be $V^{(n-n_s(i))}\otimes \bar{J}_{n_s(i)}^s$, for $s=a, b$.
\end{remark}
%%%%%%%

By the previous decomposition of the morphism $\delta_{i}$, we have that $\Ker(\delta_i) = \bigoplus_{s=a,b} (\Ker(\delta_i) \cap (A \otimes \bar{J}_{n_s(i)}^{s}))$, 
where $\Ker(\delta_i) \cap (A \otimes \bar{J}_{n_s(i)}^{s}) = \Ker(\delta_i^{s})$, so we may analyse each direct summand separately. 
Moreover, from the expression of $\delta_{i}^{s}$ we see that 
\[
(\Ker(\delta_i^{s}))_n 
= \frac{(V^{(n-n_{s}(i))} \otimes \bar{J}_{n_{s}(i)}^s) \cap (I_{n-n_{s}(i-1)}\otimes V^{(n_{s}(i-1))})}{I_{n-n_{s}(i)}\otimes \bar{J}_{n_{s}(i)}^s},
\]
for $n \in \NN_{0}$. 
On the other hand, we may rewrite $I_{n-n_{s}(i-1)}\otimes V^{(n_{s}(i-1))}$ as 
\[     I_{n-n_{s}(i)}\otimes V^{(n_{s}(i))} + \sum_{s'=a,b} \sum_{j = n -n_{s}(i)- s'+1}^{n-n_{s}(i-1) - s'} V^{(j)} \otimes R_{s'} \otimes V^{(n-s'-j)},     \]
where the first term will be denoted by $F^{s}$, and the summand corresponding to the index $s'$ of the second term will be denoted by $G^{s}_{s'}$. 
If we write $E^{s}$ instead of $V^{(n-n_{s}(i))} \otimes \bar{J}_{n_{s}(i)}^s$, the numerator of the previous quotient is given by $E^{s} \cap (F^{s} + G^{s}_{a} + G^{s}_{b})$, for $s = a, b$. 
Notice that the spaces considered in the previous paragraph depend on $n$ and $i$. 
We will however omit the indices to simplify the notation. 

We remark that the decomposition of the morphism $\delta_{i}$ immediately implies that the projective cover of $\Ker(\delta_{i})$ is the direct sum of the projective covers 
of each direct summand $\Ker(\delta_{i}^{s})$ for $s=a, b$. 
Note that $\Ker(\delta_{j}^{a})_{n} = 0$, if $n < n_{a}(j+1)$, and that $\Ker(\delta_{j}^{a})_{n_{a}(j+1)} = \bar{J}_{n_{a}(j+1)}^{a}$, for $j \leq i$, by Lemma \ref{lem:equivalenceforincl}, 
which is a consequence of the \textit{e.c.} 
This immediately implies that there is an injection of $\bar{J}_{n_{a}(j+1)}^{a}$ inside $(k \otimes_{A} \Ker(\delta_{j}))_{n_{a}(j+1)}$, given by the composition of the previous injection 
$\bar{J}_{n_{a}(j+1)}^{a} \hookrightarrow (\Ker(\delta_{j}))_{n_{a}(j+1)}$ and the canonical projection $\Ker(\delta_{j}) \rightarrow k \otimes_{A} \Ker(\delta_{j})$, for $j \leq i$, 
so the projective cover of $\Ker(\delta_{i})$ must include $A \otimes \bar{J}_{n_{a}(i+1)}^{a}$. 
On the other hand, by Lemma \ref{lemma:tresimp}, the composition of the inclusion $\bar{J}_{n_{b}(i+1)}^{b} \hookrightarrow \Ker(\delta_{i}^{b})$ with the canonical projection $\Ker(\delta_{i}^{b}) \rightarrow k \otimes_{A} \Ker(\delta_{i}^{b})$ is injective if and only if the homogeneous components $(k \otimes_{A} \Ker(\delta_{i}^{b}))_{n}$ vanish for $n < n_{b}(i+1)$. 

We remark that the equation 
\begin{equation}
\label{eq:prigen}
     E^{s} \cap F^{s} = I_{n-n_{s}(i)} \otimes \bar{J}_{n_{s}(i)}^s
\end{equation} 
always holds. 

Moreover, the \textit{e.c.} imply that 
\begin{equation}
\label{eq:seggen}
     E^{s} \cap G^{s}_{s} = V^{(n-n_{s}(i+1))} \otimes \bar{J}_{n_{s}(i+1)}^s.
\end{equation}  
The proof of this fact is somehow implicit in \cite{B2}, so we give a detailed proof just for convenience. 
It suffices to prove the inclusion of the left member inside the right one, for the other inclusion is direct. 
Moreover, note that $G^{s}_{s}$ vanishes for $n < n_{s}(i+1)$, by tensor degree reasons, and the same happens for the right member of \eqref{eq:seggen}, 
so it suffices to prove the identity (or the inclusion) only for $n \geq n_{s}(i+1)$. 

We start by writing the following trivial identity  
\[     E^{s} \cap G^{s}_{s} = E^s \cap (V^{(n-n_{s}(i))} \otimes R_{s} \otimes V^{(n_{s}(i-2))}) \cap G^{s}_{s}.     \]
We may consider two cases: $i$ odd or $i$ even. 
If $i$ is odd, then the sum in $G_{s}^{s}$ consists of only one summand, namely $V^{(n-n_{s}(i)-s+1)} \otimes R_{s} \otimes V^{(n_{s}(i)-1)}$, for $n_{s}(i-1) = n_{s}(i) - 1$. 
In this case we have that $n-n_{s}(i)-s+1 \geq 0$, because $n \geq n_{s}(i+1)$ and $i$ is odd. 
Hence, 
\begin{align*}
     E^{s} \cap G^{s}_{s} &= E^s \cap (V^{(n-n_{s}(i))} \otimes R_{s} \otimes V^{(n_{s}(i-2))}) \cap G^{s}_{s} 
\\
                                    &= E^{s} \cap \Big(V^{(n-n_{s}(i)-s+1)} \otimes \big((V^{(s-1)} \otimes R_{s}) \cap (R_{s} \otimes V^{(s-1)})\big) \otimes V^{(n_{s}(i-2))}\Big)
\\
                                    &=E^{s} \cap \Big(V^{(n-n_{s}(i)-s+1)} \otimes \bar{J}_{2s-1}^{s} \otimes V^{(n_{s}(i-2))}\Big) 
                                      = V^{(n-n_{s}(i+1))} \otimes \bar{J}_{n_{s}(i+1)}^s,
\end{align*}
where we have used in the penultimate equality the identities of Lemma \ref{lem:equivalenceforincl}, which hold due to the \textit{e.c.} and Proposition \ref{prop:eciffdistributivity}. 

We consider now the case $i$ is even. 
We point out that in this case $n-n_{s}(i)-s+1$ may be negative, so the sum is more tedious to handle. 
In fact, we get that 
\[     E^{s} \cap G^{s}_{s} = E^s \cap (V^{(n-n_{s}(i))} \otimes R_{s} \otimes V^{(n_{s}(i-2))}) \cap G^{s}_{s}      \]
can be rewritten as 
\[     E^{s} \cap \Big(V^{(\mathrm{Max})} \otimes \big((V^{(\mathrm{min})} \otimes R_{s}) \cap 
(\sum_{j=0}^{\mathrm{min}-1} V^{(j)} \otimes R_{s} \otimes V^{(\mathrm{min}-j)})\big) \otimes V^{(n_{s}(i-2))}\Big),     \]
where $\mathrm{Max} = \mathrm{max}\{0,n-n_{s}(i)-s+1\}$ and $\mathrm{min} = \mathrm{min}\{n-n_{s}(i),s-1\}$. 
By the \textit{e.c.}, it further coincides with 
\[     E^{s} \cap \Big(V^{(\mathrm{Max})} \otimes (V^{(\mathrm{min}-1)} \otimes \bar{J}_{s+1}^{s}) \otimes V^{(n_{s}(i-2))}\Big) = V^{(n-n_{s}(i+1))} \otimes \bar{J}_{n_{s}(i+1)}^s,     \] 
and the claim is proved.  

Finally, for a couple $(s,s')$ of different elements in $\{ a,b \}$, and $\mathrm{max}\{0, s'-s+1\} \leq m \leq s'-1$, we define the following intersection 
\begin{equation}
\label{eq:comoec}
    (V^{(m)} \otimes R_{s}) \cap \Big(\sum_{j=0}^{m+s-s'-1} V^{(j)} \otimes R_{s'} \otimes V^{(m+s-s'-j)}\Big),
\end{equation}
that will be denoted by $X^{s,m}_{s'}$. 

We claim that we have the following inclusion  
\begin{equation}
\label{eq:tergen}
     E^{s} \cap G^{s}_{s'} \subseteq E^{s} \cap \big(V^{(\mathrm{Max}')} \otimes X^{s,\mathrm{min}'}_{s'} \otimes V^{(n_{s}(i-2))}\big),
\end{equation}  
which is in fact an equality for $i$ even, 
where $\mathrm{Max}' = \mathrm{max}\{0,n-n_{s}(i)-s'+1\}$ and $\mathrm{min}' = \mathrm{min}\{n-n_{s}(i),s'-1\}$. 
If $i$ is odd the corresponding equality should be 
\begin{equation}
\label{eq:tergeniodd}
     E^{s} \cap G^{s}_{s'} = E^{s} \cap \Big(V^{(n-n_{s}(i)-s'+1)} \otimes \big((V^{(s'-1)} \otimes R_{s}) \cap (R_{s'} \otimes V^{(s-1)})\big) \otimes V^{(n_{s}(i-2))}\Big).
\end{equation}  
The proof is parallel to the previous one, but we sketch it for completeness. 
We remark that $G^{s}_{s'}$ vanishes for $n < n_{s}(i-1)+s'$, by tensor degree reasons, so it suffices to prove the inclusion (and the equality if $i$ is even) only for $n \geq n_{s}(i-1)+s'$. 

The first step is to write 
\[     E^{s} \cap G^{s}_{s'} = E^s \cap (V^{(n-n_{s}(i))} \otimes R_{s} \otimes V^{(n_{s}(i-2))}) \cap G^{s}_{s'}.     \]
We now consider two cases: $i$ odd or $i$ even. 
If $i$ is odd, then the sum in $G^{s}_{s'}$ consists of only one summand, namely $V^{(n-n_{s}(i)-s'+1)} \otimes R_{s'} \otimes V^{(n_{s}(i)-1)}$, for $n_{s}(i-1) = n_{s}(i) - 1$. 
In this case we also have $n-n_{s}(i)-s'+1 \geq 0$, because $n \geq n_{s}(i-1)+s'$ and $i$ is odd. 
So, $E^{s} \cap G^{s}_{s'}$ can be further rewritten as  
\[     E^{s} \cap \Big(V^{(n-n_{s}(i)-s'+1)} \otimes \big((V^{(s'-1)} \otimes R_{s}) \cap (R_{s'} \otimes V^{(s-1)})\big) \otimes V^{(n_{s}(i-2))}\Big),     \]
and the equality in \eqref{eq:tergeniodd} follows. 
Moreover, it is trivially included in 
\[     E^{s} \cap \Big(V^{(n-n_{s}(i)-s'+1)} \otimes \big((V^{(s'-1)} \otimes R_{s}) \cap (\sum_{j=0}^{s-2} V^{(j)} \otimes R_{s'} \otimes V^{(s-j-1)})\big) \otimes V^{(n_{s}(i-2))}\Big),      \]
which coincides with $E^{s} \cap (V^{(n-n_{s}(i)-s'+1)} \otimes X^{s,s'-1}_{s'} \otimes V^{(n_{s}(i-2))})$,  so the desired inclusion follows. 

We consider now the case $i$ is even, and we remark that in this case $n-n_{s}(i)-s'+1$ may be negative. 
In fact, we have that $E^{s} \cap G^{s}_{s'}$ can be rewritten as the intersection of $E^{s}$ with
\[     V^{(\mathrm{Max}')} \otimes \Big((V^{(\mathrm{min}')} \otimes R_{s}) \cap 
\Big(\sum_{j=0}^{\mathrm{min}'+s-s'-1} V^{(j)} \otimes R_{s} \otimes V^{(\mathrm{min}'+s-s'-j)}\Big)\Big) \otimes V^{(n_{s}(i-2))}.     \]
Hence, by the definition of $X^{s,m}_{s'}$ given in \eqref{eq:comoec}, the equality for $i$ even follows and the claim is proved.

In fact, for $s = b$ and $s' = a$, using the second inclusion of Remark \ref{rem:incimp} 
(which follows from the assumption that $\Ker(\delta_{2})$ satisfies that $k \otimes_{A} \Ker(\delta_{2}) \simeq \bar{J}_{a+1}^{a} \oplus \bar{J}_{b+1}^{b}$),  
the \textit{e.c.} and the definition of $\bar{J}_{n_{b}(i+1)}^{b}$, we have that 
\begin{equation}
\label{eq:tergenb}
     E^{b} \cap G^{b}_{a} \subseteq V^{(n_{b}(i+1))} \otimes \bar{J}_{n_{b}(i+1)}^{b}.
\end{equation}  

We state the following proposition which generalizes the result for $i = 2$. 
%%%%%%%
\begin{proposition}
\label{prop:kerdeltai}
Suppose that for all $j$ such that $2 \leq j<i$, $\Ker(\delta_j)$ is $2$-pure in degrees $n_a(j+1)$ and $n_b(j+1)$ satisfying that that 
$k\otimes_A \Ker(\delta_j) \simeq \bar{J}_{n_a(j+1)}^a\oplus \bar{J}_{n_b(j+1)}^b$. 
Then, $\Ker(\delta_i)$ is $2$-pure in degrees $n_a(i+1)$ and $n_b(i+1)$ such that $k \otimes_A \Ker(\delta_i) \simeq \bar{J}_{n_a(i+1)}^a\oplus \bar{J}_{n_b(i+1)}^b$ if and only if 
the tuples $(E^{s},F^{s},G^{s}_{a},G^{s}_{b})$ of subspaces defined in this subsection are distributive for each $s \in \{ a, b \}$ and for all $n \in \NN_{0}$ (or just $n \geq n_{s}(i)$), 
and we have the following inclusions 
\[     E^{a} \cap \big(X^{a,n-n_{a}(i)}_{b} \otimes V^{(n_{a}(i-2))}\big) 
        \subseteq V^{(n - n_{a}(i+1))} \otimes \bar{J}_{n_a(i+1)}^a + I_{n-n_{a}(i)} \otimes \bar{J}_{n_a(i)}^a,
\]
for all $n \in \NN_{0}$ satisfying that $n_{a}(i-1) + b \leq n \leq n_{a}(i)+b-1$ if $i$ is even, and the inclusion 
\begin{multline*}
      (V^{(b-1)} \otimes \bar{J}_{n_{a}(i)}^a) \cap \big((V^{(s'-1)} \otimes R_{s}) \cap (R_{s'} \otimes V^{(s-1)})\big) \otimes V^{(n_{s}(i-2))}
       \\ \subseteq V^{(b-a)} \otimes \bar{J}_{n_a(i+1)}^a + I_{b-1} \otimes \bar{J}_{n_a(i)}^a,
\end{multline*}
if $i$ is odd. 
\end{proposition}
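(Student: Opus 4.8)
The plan is to follow very closely the pattern of Proposition \ref{prop:kerdelta2pure} (the case $i=2$), exploiting the decomposition $\delta_i = \delta_i^{a} \oplus \delta_i^{b}$ recorded at the beginning of this subsection (valid under the standing inductive hypothesis), which reduces everything to the two direct summands $\Ker(\delta_i^{s}) = \Ker(\delta_i) \cap (A \otimes \bar{J}_{n_s(i)}^{s})$, for $s = a, b$. For each $s$ the goal is to identify the numerator $E^{s} \cap (F^{s} + G^{s}_{a} + G^{s}_{b})$ of the homogeneous component $(\Ker(\delta_i^{s}))_{n}$, computed in this subsection, with $I_{n-n_{s}(i)} \otimes \bar{J}_{n_{s}(i)}^{s} + V^{(n-n_{s}(i+1))} \otimes \bar{J}_{n_{s}(i+1)}^{s}$ for all $n$; once this is done, the passage to purity and to the $\Tor$ isomorphism is verbatim the one in Proposition \ref{prop:kerdelta2pure}.

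For the implication ``$\Leftarrow$'', I would use the distributivity of $(E^{s}, F^{s}, G^{s}_{a}, G^{s}_{b})$ to write $E^{s} \cap (F^{s} + G^{s}_{a} + G^{s}_{b}) = (E^{s} \cap F^{s}) + (E^{s} \cap G^{s}_{a}) + (E^{s} \cap G^{s}_{b})$ and then bound each summand: $E^{s} \cap F^{s}$ is the denominator by \eqref{eq:prigen}, $E^{s} \cap G^{s}_{s} = V^{(n-n_{s}(i+1))} \otimes \bar{J}_{n_{s}(i+1)}^{s}$ by \eqref{eq:seggen}, and the cross term $E^{s} \cap G^{s}_{s'}$ with $s' \neq s$ is absorbed into the previous two. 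For $s = b$ this absorption is immediate from \eqref{eq:tergenb} (which gives $E^{b} \cap G^{b}_{a} \subseteq E^{b} \cap G^{b}_{b}$, using the elementary inclusions $\bar{J}_{m}^{b} \subseteq V^{(k)} \otimes \bar{J}_{m-k}^{b}$); for $s = a$ one first rewrites $E^{a} \cap G^{a}_{b}$ through \eqref{eq:tergen} (if $i$ is even) or \eqref{eq:tergeniodd} (if $i$ is odd) in terms of the spaces $X^{a,\bullet}_{b}$, then applies the hypothesised inclusion — available only at the boundary degrees — and finally propagates it to arbitrary $n$. With the displayed identity established for all $n$, dividing by the denominator exhibits $(\Ker(\delta_i^{s}))_{n}$ as an epimorphic image of $A_{n-n_{s}(i+1)} \otimes \bar{J}_{n_{s}(i+1)}^{s}$ (via the obvious inclusion $I_{n-n_{s}(i+1)} \otimes \bar{J}_{n_{s}(i+1)}^{s} \subseteq (V^{(n-n_{s}(i+1))} \otimes \bar{J}_{n_{s}(i+1)}^{s}) \cap (I_{n-n_{s}(i)} \otimes \bar{J}_{n_{s}(i)}^{s})$), so $\Ker(\delta_i^{s})$ is pure in degree $n_{s}(i+1)$; Lemma \ref{lemma:tresimp} then provides the injection $\bar{J}_{n_{s}(i+1)}^{s} \hookrightarrow k \otimes_{A} \Ker(\delta_i^{s})$, which by the epimorphism just obtained is an isomorphism. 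Since $n_{a}(i+1) < n_{b}(i+1)$ and the two summands sit inside a direct sum, this gives that $\Ker(\delta_i)$ is $2$-pure in degrees $n_{a}(i+1), n_{b}(i+1)$ with $k \otimes_{A} \Ker(\delta_i) \simeq \bar{J}_{n_{a}(i+1)}^{a} \oplus \bar{J}_{n_{b}(i+1)}^{b}$.

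For the converse ``$\Rightarrow$'', assuming $\Ker(\delta_i)$ has the stated property, I would first argue — by comparing homogeneous components, using the facts $\Ker(\delta_i^{a})_{n} = 0$ for $n < n_{a}(i+1)$ and $\Ker(\delta_i^{a})_{n_{a}(i+1)} = \bar{J}_{n_{a}(i+1)}^{a}$ recorded above, together with the injections of Lemma \ref{lemma:tresimp} — that each $\Ker(\delta_i^{s})$ is separately pure in degree $n_{s}(i+1)$ with $k \otimes_{A} \Ker(\delta_i^{s}) \simeq \bar{J}_{n_{s}(i+1)}^{s}$, so that its projective cover is $A \otimes \bar{J}_{n_{s}(i+1)}^{s}$. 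Unwinding the surjectivity of this projective cover in each degree $n$, exactly as in the commutative diagram of the proof of Proposition \ref{prop:kerdelta2pure}, forces $E^{s} \cap (F^{s} + G^{s}_{a} + G^{s}_{b}) = I_{n-n_{s}(i)} \otimes \bar{J}_{n_{s}(i)}^{s} + V^{(n-n_{s}(i+1))} \otimes \bar{J}_{n_{s}(i+1)}^{s}$ for all $n$. Combined with \eqref{eq:prigen} and \eqref{eq:seggen} this reads $E^{s} \cap (F^{s} + G^{s}_{a} + G^{s}_{b}) = (E^{s} \cap F^{s}) + (E^{s} \cap G^{s}_{s})$, which gives the distributivity of $(E^{s}, F^{s}, G^{s}_{a}, G^{s}_{b})$ (the reverse inclusion being automatic); and evaluating the same identity at the boundary degrees, after rewriting $E^{a} \cap G^{a}_{b}$ via \eqref{eq:tergen} or \eqref{eq:tergeniodd}, produces exactly the stated inclusions.

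The main obstacle — beyond the parity bookkeeping in the two cases $i$ even and $i$ odd, with the corresponding shifts $n_{s}(i+1) - n_{s}(i) \in \{1, s-1\}$ — is the propagation step in the ``$\Leftarrow$'' direction: the crucial inclusion for $E^{a} \cap G^{a}_{b}$ is hypothesised only at the first relevant degrees, and one must extend it to all $n$. This is done by tensoring the boundary inclusion on the left by $V^{(\mathrm{Max}')}$, using that $V^{(k)} \otimes (\place)$ is exact (hence preserves intersections) and that $I$ is an ideal, together with the tensor decompositions of $E^{a}$ and $G^{a}_{b}$ displayed in this subsection; this is the one genuinely new piece of bookkeeping relative to the homogeneous case, and everything else is a routine, if lengthy, adaptation of the argument for $i = 2$.
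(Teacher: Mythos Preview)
Your proposal is correct and follows essentially the same route as the paper: split into the two summands $\Ker(\delta_i^{s})$, identify the numerator $E^{s}\cap(F^{s}+G^{s}_{a}+G^{s}_{b})$ via distributivity together with \eqref{eq:prigen}, \eqref{eq:seggen}, \eqref{eq:tergen}/\eqref{eq:tergeniodd} and \eqref{eq:tergenb}, and for the converse run the commutative-diagram argument of Proposition~\ref{prop:kerdelta2pure} for each $s$. The one place you are more careful than the paper is the propagation of the inclusion for $E^{a}\cap G^{a}_{b}$ from the boundary degrees to all $n$: the paper simply invokes ``the inclusions of the statement'' in the ``if'' direction without saying why they suffice beyond the stated range, whereas you spell out that left-tensoring by $V^{(\mathrm{Max}')}$ (which preserves intersections and sends $I_{m}$ into $I_{m+\mathrm{Max}'}$) carries the boundary inclusion to the general one; this is indeed the only extra bookkeeping and your description of it is correct.
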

%%%%%%%
\begin{proof}
We shall first prove the ``if'' part, so we assume the distributivity of the mentioned tuples, and the previous inclusions.    

We have already seen that 
\[
(\Ker(\delta_i))_n 
= \bigoplus_{s=a,b} \frac{(V^{(n-n_{s}(i))} \otimes \bar{J}_{n_{s}(i)}^s) \cap (I_{n-n_{s}(i-1)}\otimes V^{(n_{s}(i-1))})}{I_{n-n_{s}(i)}\otimes \bar{J}_{n_{s}(i)}^s}.
\]
If we denote the numerator of the summand indexed by $s$ on the right member by $N_{n}^{s}$, we have that 
\begin{align*}
N_n^{s} &= E^{s} \cap (F^{s}+G^{s}_{a}+G^{s}_{b})
= (E^{s} \cap F^{s})+(E^{s} \cap G^{s}_{a})+(E^{s} \cap G^{s}_{b})
\\
&= I_{n-n_{s}(i)} \otimes \bar{J}_{n_{s}(i)}^s + V^{(n-n_{s}(i+1))} \otimes \bar{J}_{n_{s}(i+1)}^s,
\end{align*}
where we have used the distributivity conditions in the second equality, equations \eqref{eq:prigen}, \eqref{eq:seggen}, 
either \eqref{eq:tergen} if $i$ is even, or \eqref{eq:tergeniodd} if $i$ is odd, and \eqref{eq:tergenb}, and the inclusions of the statement. 
Hence, $\Ker(\delta_i)$ is $2$-pure in degrees $n_{a}(i+1)$ and $n_{b}(i+1)$ satisfying in fact that $k \otimes_A \Ker(\delta_i) \simeq \bar{J}_{n_{a}(i+1)}^a\oplus \bar{J}_{n_{b}(i+1)}^b$.

Conversely, assume that $\Ker(\delta_i)$ is $2$-pure in degrees $n_{a}(i+1)$ and $n_{b}(i+1)$ such that $k \otimes_A \Ker(\delta_i) \simeq \bar{J}_{n_{a}(i+1)}^a\oplus \bar{J}_{n_{b}(i+1)}^b$.
We will prove the required distributivity condition and the stated inclusions.   
Consider the following commutative diagram for $s \in \{ a, b \}$
\[
\xymatrix@C-80pt
{
\displaystyle{\frac{V^{(n-n_{s}(i+1))}\otimes \bar{J}_{n_{s}(i+1)}^s}{I_{n-n_{s}(i+1)}\otimes \bar{J}_{n_{s}(i+1)}^s}}
\ar@{->>}[rr]
\ar@{->>}[rdd]^{g}
&
& 
\displaystyle{\frac{V^{(n-n_{s}(i+1))}\otimes \bar{J}_{n_{s}(i+1)}^s+I_{n-n_{s}(i)}\otimes \bar{J}_{n_{s}(i)}^s}{I_{n-n_{s}(i)}\otimes \bar{J}_{n_{s}(i)}^s}}
\ar@{_{(}->}[ldd]^{f}
\\
&
& 
\\ 
&
\displaystyle{\frac{(V^{(n-s-1)}\otimes \bar{J}_{s+1}^s)\cap (I_{n-s}\otimes V^{(s)})}{I_{n-s-1}\otimes \bar{J}_{s+1}^s}}
& 
}
\]
Notice that the domain of $g$ is the homogeneous component of degree $n$ of $A \otimes \bar{J}_{n_{s}(i+1)}^{s}$, its codomain is 
the component of degree $n$ of $\Ker(\delta_{i}^{s})$ and $g$ is the corresponding homogeneous component of the projective cover, so surjective. 
The horizontal epimorphism follows from the trivial inclusion 
\[     I_{n-n_{s}(i+1)}\otimes \bar{J}_{n_{s}(i+1)}^s \subseteq (V^{(n-n_{s}(i+1))}\otimes \bar{J}_{n_{s}(i+1)}^s)\cap (I_{n-n_{s}(i)}\otimes \bar{J}_{n_{s}(i)}^s),     \] 
and the map $f$ is just the canonical inclusion. 
The commutativity of the diagram just follows from the construction of the projective cover of a module. 
The surjectivity of $g$ yields the surjectivity of $f$, so the latter is an isomorphism. 
If we consider $i$ even and the case $n_{a}(i-1) + b \leq n \leq n_{a}(i)+b-1$, the equality coming from \eqref{eq:tergen} and the identity given by $f$ imply that the inclusions for the even case hold. 
On the other hand, if $i$ is odd and we take $n = n_{a}(i)+b-1$, the equality \eqref{eq:tergeniodd} tells us that the inclusion in the odd case also holds. 
Finally, the distributivity of the corresponding tuple also follows from the isomorphism $f$. 
The proposition is thus proved. 
\end{proof}

%%%%%%%
\begin{remark}
\label{rem:isoindi}
In analogous manner to what was stated in Remark \ref{rem:isoind2}, the condition $k \otimes_{A} \Ker(\delta_{i}) \simeq \bar{J}_{n_{a}(i+1)}^{a} \oplus \bar{J}_{n_{b}(i+1)}^{b}$, for $i \geq 3$, 
usually understood as stating that the obvious morphism from $\bar{J}_{n_{a}(i+1)}^{a} \oplus \bar{J}_{n_{b}(i+1)}^{b}$ to $k \otimes_{A} \Ker(\delta_{i})$ is an isomorphism, 
is equivalent to the weaker condition given by the existence of any isomorphism $k \otimes_{A} \Ker(\delta_{i}) \simeq \bar{J}_{n_{a}(i+1)}^{a} \oplus \bar{J}_{n_{b}(i+1)}^{b}$, 
due to Lemma \ref{lemma:tresimp} (see also the proof of Proposition \ref{prop:tresimp}). 
\end{remark}
%%%%%%%

%%%%%%%%%%%%%%%%%%%%%%%%%%%%%%%%%%%%%%%%%%%%%%%%%%%%%%%%%%%%%%%%%%%%%%%%%%%%%%%%%%%%%%%%%%%%%%%%%%%%%%%%%%%%%%%%%%%%%%%%%%%%%%%%%%%%%%%%%%%%%%%%%%%%%%%%%%%%

\subsection{\texorpdfstring{Main result}{Main result}}

We shall summarize in this subsection the main result achieved in the description  of the $\{a,b\}$-multi-Koszul property of algebras in terms of lattices of subspaces, which follows from the previous results proved in the two previous subsections.

%%%%%%%
\begin{theorem}
\label{thm:Kozulequiv}
Let $A=T(V)/\cl{R}$ be an $\{a,b\}$-multi-homogeneous algebra ($2 \leq a<b$) such that $R_a \oplus R_b$ satisfies the minimality condition. 
We have thus the following equivalences:
\begin{itemize}
\item[(i)] The \textit{e.c.} are satisfied, we have the following collection of inclusions 
\[     E^{a} \cap \big(X^{a,n-n_{a}(i)}_{b} \otimes V^{(n_{a}(i-2))}\big) 
        \subseteq V^{(n - n_{a}(i+1))} \otimes \bar{J}_{n_a(i+1)}^a + I_{n-n_{a}(i)} \otimes \bar{J}_{n_a(i)}^a,
\]
for all $n \in \NN_{0}$ satisfying that $n_{a}(i-1) + b \leq n \leq n_{a}(i)+b-1$ if $i$ is even, and the inclusion 
\begin{multline*}
      (V^{(b-1)} \otimes \bar{J}_{n_{a}(i)}^a) \cap \big((V^{(s'-1)} \otimes R_{s}) \cap (R_{s'} \otimes V^{(s-1)})\big) \otimes V^{(n_{s}(i-2))}
       \\ \subseteq V^{(b-a)} \otimes \bar{J}_{n_a(i+1)}^a + I_{b-1} \otimes \bar{J}_{n_a(i)}^a,
\end{multline*}
if $i$ is odd, and the tuple $(E',E'',F',G',F'',G'')$ considered in Proposition \ref{prop:kerdelta2pure} is $(2,2)$-bidistributive, 
and for all $i \geq 3$ the tuples $(E^{s},F^{s},G^{s}_{a},G^{s}_{b})$ of subspaces defined in the previous Subsection 
are distributive for each $s \in \{ a, b \}$ and for all $n \in \NN_{0}$ (or just $n \geq n_{s}(i)$). 
\item[(ii)] $A$ is $\{a,b\}$-multi-Koszul.
\end{itemize}
\end{theorem}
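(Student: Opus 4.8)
The plan is to prove that each of conditions (i) and (ii) is equivalent to a single intermediate statement $(\star)$, namely: for every $i \geq 2$, the kernel $\Ker(\delta_i)$ of the $i$-th differential of the left multi-Koszul complex of $A$ is $2$-pure in degrees $n_a(i+1)$ and $n_b(i+1)$, with $k \otimes_A \Ker(\delta_i) \simeq \bar J_{n_a(i+1)}^a \oplus \bar J_{n_b(i+1)}^b$. Recall that the left multi-Koszul complex already agrees with the minimal graded projective resolution of $k$ up to homological degree $2$, so the content of $(\star)$ lies in degrees $i \geq 2$.

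First I would establish $(\mathrm{ii}) \Leftrightarrow (\star)$. For the forward direction, if $A$ is multi-Koszul then its left multi-Koszul complex is a minimal graded projective resolution of $k$, so for each $i \geq 2$ the surjection $A \otimes J_{i+1} \to \Ker(\delta_i)$ induced by $\delta_{i+1}$ is essential; since $A \otimes J_{i+1} = A \otimes (\bar J_{n_a(i+1)}^a \oplus \bar J_{n_b(i+1)}^b)$ is $2$-pure in degrees $n_a(i+1)$ and $n_b(i+1)$, Proposition \ref{prop:essentialineachdegree} gives that $\Ker(\delta_i)$ is $2$-pure in the same degrees, and applying the functor $k \otimes_A (\place)$ to the essential surjection yields $k \otimes_A \Ker(\delta_i) \simeq J_{i+1}$, which is $(\star)$. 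For the converse, assuming $(\star)$ I would run the recursive construction of a minimal projective resolution of $k$ from Subsection \ref{sec:kerdeltaimay2}: starting from the known segment $A \otimes R \overset{\delta_2}{\longrightarrow} A \otimes V \overset{\delta_1}{\longrightarrow} A \overset{\delta_0}{\longrightarrow} k$, one obtains at each stage $i \geq 2$ a projective cover $A \otimes (\bar J_{n_a(i+1)}^a \oplus \bar J_{n_b(i+1)}^b) \to \Ker(\delta_i)$, whose existence follows from $(\star)$ because the natural map $\bar J_{n_a(i+1)}^a \oplus \bar J_{n_b(i+1)}^b \to k \otimes_A \Ker(\delta_i)$ of Lemma \ref{lemma:tresimp} is injective — automatically when $i$ is even, and, when $i$ is odd, since $(\star)$ forces $k \otimes_A (\Ker(\delta_i) \cap (A \otimes J_i^s))$ to be concentrated in degree $n_s(i+1)$ exactly as in the proof of Proposition \ref{prop:tresimp} — hence bijective for dimension reasons, so that Lemma \ref{lem:biyectiontensor} applies. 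By the explicit formulas recalled in Subsection \ref{sec:kerdeltaimay2}, the coextension of this cover is the multi-Koszul differential $\delta_{i+1}$, so the left multi-Koszul complex is a minimal projective resolution of $k$, hence acyclic in positive homological degrees, and $A$ is multi-Koszul.

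Then I would establish $(\mathrm{i}) \Leftrightarrow (\star)$ by induction on $i \geq 2$. The base case $i=2$ is Proposition \ref{prop:kerdelta2pure}, which states that $\Ker(\delta_2)$ is $2$-pure in degrees $a+1 = n_a(3)$ and $b+1 = n_b(3)$ with $k \otimes_A \Ker(\delta_2) \simeq \bar J_{a+1}^a \oplus \bar J_{b+1}^b$ if and only if the \textit{e.c.}\ hold and the tuple $(E',E'',F',G',F'',G'')$ considered there is $(2,2)$-bidistributive. For the inductive step, assuming $(\star)$ for all $j$ with $2 \leq j < i$ — so that the hypotheses of Proposition \ref{prop:kerdeltai} are met and the auxiliary inclusion \eqref{eq:tergenb} is available via Remark \ref{rem:incimp} — Proposition \ref{prop:kerdeltai} shows that $(\star)$ holds at the index $i$ if and only if the tuples $(E^s,F^s,G^s_a,G^s_b)$ are distributive for each $s \in \{a,b\}$ and the inclusion displayed for $i$ even, respectively the one displayed for $i$ odd, holds. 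Collecting these equivalences over all $i \geq 2$ gives exactly the list of conditions in (i), which completes the argument. I expect the main obstacle to be, beyond keeping track of the many lattices of subspaces, the converse half of $(\mathrm{ii}) \Leftrightarrow (\star)$: one must verify that the recursively built minimal resolution genuinely reproduces the left multi-Koszul complex, i.e. that the section of the projection $\Ker(\delta_i) \to k \otimes_A \Ker(\delta_i)$ may be chosen to be the canonical inclusion $\bar J_{n_a(i+1)}^a \oplus \bar J_{n_b(i+1)}^b \hookrightarrow \Ker(\delta_i)$ provided by Lemma \ref{lemma:tresimp}, so that the coextension of the resulting projective cover is the multi-Koszul differential $\delta_{i+1}$ itself and not merely an isomorphic map; this is precisely what makes Propositions \ref{prop:kerdelta2pure} and \ref{prop:kerdeltai} applicable without modification.
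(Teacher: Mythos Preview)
Your proposal is correct and follows essentially the same approach as the paper: the paper's two-sentence proof simply invokes Propositions \ref{prop:kerdelta2pure} and \ref{prop:kerdeltai} for $(\mathrm{ii})\Rightarrow(\mathrm{i})$, and those same propositions together with the recursive process of Subsection \ref{sec:kerdeltaimay2} for the converse, which is exactly your strategy of passing through the intermediate statement $(\star)$. You have merely made explicit what the paper leaves implicit, including the use of Lemma \ref{lemma:tresimp} and the argument from the proof of Proposition \ref{prop:tresimp} to handle the odd-$i$ injectivity issue.
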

%%%%%%%
\begin{proof}
The first statement is a consequence of the second one by Propositions \ref{prop:kerdelta2pure} and \ref{prop:kerdeltai}. 
The converse is also direct, using the mentioned results and taking into account the recursive process explained in the first paragraph of the previous subsection. 
\end{proof}

%%%%%%%%%%%%%%%%%%%%%%%%%%%%%%%%%%%%%%%%%%%%%%%%%%%%%%%%%%%%%%%%%%%%%%%%%%%%%%%%%%%%%%%%%%%%%%%%%%%%%%%%%%%%%%%%%%%%%%%%%%%%%%%%%%%%%%%%%%%%%%%%%%%%%%%%%%%%

\subsection{\texorpdfstring{The monomial case}{The monomial case}}

We shall consider in this subsection an algebra $A=T(V)/\cl{R}$ such that $R = R_a \oplus R_b$ ($2 \leq a<b$) satisfies the minimality condition 
and it has a basis of monomials. 

We shall first recall some definitions.  
The set of all the subspaces of $V$ will be denoted by $\mathcal{L}(V)$. 
It is known that the lattice $(\mathcal{L}(V), \subseteq, +, \cap)$ is modular, \textit{i.e.} given $W_1,W_2,W_3 \in \mathcal{L}(V)$, 
if $W_2 \subseteq W_1$, then $W_1 \cap (W_2 + W_3) = W_2 + (W_1\cap W_3)$. 
A sublattice $\mathcal{S}\subseteq \mathcal{L}(V)$ is \emph{distributive} if $E \cap (F+G)=(E\cap F)+(E\cap G)$ for all $E,F,G \in \mathcal{S}$. 
Note that this also implies the condition $E+(F\cap G)=(E+F)\cap (E+G)$ (see \cite{O}, Ch. I, \S 4, Thm. 3).

The following result is a first criterion for distributivity. 
%%%%%%%
\begin{proposition}
\label{prop:distbasis}
Given $W_1,\dots ,W_n$ subspaces of $V$, we consider the sublattice $\mathcal{T}$ generated by $W_1,\cdots,W_n$, 
\textit{i.e.} $\mathcal{T}$ is the intersection of all the sublattices of $\mathcal{L}(V)$ containing the subspaces $W_1,\cdots ,W_n$.
Then $\mathcal{T}$ is distributive if and only if there exists a basis $\mathcal{B}$ of $V$ such that $\mathcal{B}_i = \mathcal{B}\cap W_i$ is a basis of $W_i$ for all $1\le i\le n$. 
In this case, we say that $\mathcal{B}$ distributes with respect to $W_1,\cdots ,W_n$.
\end{proposition}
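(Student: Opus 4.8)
The plan is to prove the two implications separately, with the ``if'' direction being the quick one. Suppose first that such a basis $\mathcal{B}$ exists, so that $\mathcal{B}_i = \mathcal{B} \cap W_i$ is a basis of $W_i$ for each $i$. The key observation is that any subspace obtained from the $W_i$ by successively taking sums and intersections is spanned by a subset of $\mathcal{B}$; more precisely, I would show by induction on the construction of an element $U$ of $\mathcal{T}$ that $\mathcal{B} \cap U$ is a basis of $U$. Sums are immediate, and for intersections one uses that if $U$ and $U'$ are each spanned by subsets $\mathcal{C}, \mathcal{C}'$ of the fixed basis $\mathcal{B}$, then $U \cap U' = \mathrm{span}_k(\mathcal{C} \cap \mathcal{C}')$ — this is the standard fact that subspaces spanned by subsets of a common basis behave ``set-theoretically''. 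Once every $U \in \mathcal{T}$ is of this coordinate form, distributivity $E \cap (F + G) = (E \cap F) + (E \cap G)$ reduces to the distributivity of the Boolean lattice of subsets of $\mathcal{B}$, namely $\mathcal{C}_E \cap (\mathcal{C}_F \cup \mathcal{C}_G) = (\mathcal{C}_E \cap \mathcal{C}_F) \cup (\mathcal{C}_E \cap \mathcal{C}_G)$, which is trivial.

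For the converse, assume $\mathcal{T}$ is distributive; I want to produce a basis $\mathcal{B}$ of $V$ adapted simultaneously to all the $W_i$. The approach I would take is to refine the subspaces into a common ``flag-like'' decomposition. Consider the finite collection of all the atoms of the distributive sublattice generated by $W_1, \dots, W_n$ together with $V$ and $0$: since a finite distributive lattice is isomorphic to the lattice of down-sets of its poset of join-irreducibles, one can write each $W_i$ (and $V$ itself) as a sum of join-irreducible elements of $\mathcal{T}$. I would then pick, for each join-irreducible $P$ with unique lower cover $P_-$ in $\mathcal{T}$, a basis of a complement of $P_-$ in $P$; the union of all these partial bases is the candidate $\mathcal{B}$. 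Using the distributivity (and hence modularity) of $\mathcal{T}$ one checks that these complements are in ``direct sum position'', so $\mathcal{B}$ is a basis of $V$, and that for each $i$ the subspaces contributing to $W_i$ are exactly those $P$ below $W_i$, whence $\mathcal{B} \cap W_i$ is a basis of $W_i$. Alternatively — and this is probably the cleanest write-up — one proceeds by induction on $n$: having a basis $\mathcal{B}'$ of the sublattice generated by $W_1, \dots, W_{n-1}$, one uses the distributivity relations between $W_n$ and each element of that (finite) sublattice to refine $\mathcal{B}'$ through $W_n$, invoking the elementary fact that if $\mathcal{B}'$ distributes with respect to a distributive family and $W_n$ is distributive over all of them, one can choose inside each ``cell'' of the $\mathcal{B}'$-decomposition a basis adapted to its intersection with $W_n$.

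The main obstacle is the converse direction: turning the abstract lattice-theoretic distributivity into an honest common basis. The subtlety is that choices of complements must be made coherently across all the subspaces at once, and a naive choice adapted to one $W_i$ need not be adapted to another; this is exactly why one needs the structure theorem for finite distributive lattices (or an equivalent induction that carries along the full generated sublattice, not just the individual $W_i$). I expect the ``if'' direction and the reduction to join-irreducibles to be routine, while the verification that the union of the complement-bases is genuinely a basis of $V$ — i.e. that it is both spanning and linearly independent — will require careful use of modularity and is where the real work lies. Since the statement is classical (it is the lattice-of-subspaces analogue of \cite{O}, Ch.\ I, \S 4), I would, if space is tight, give the ``if'' direction in full and sketch the converse via the join-irreducible decomposition, citing \cite{O} for the underlying combinatorial facts about distributive lattices.
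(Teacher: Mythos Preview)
Your proposal is sound, but note that the paper itself does not prove this statement: it simply cites \cite{Ba}, Lemma~1.2 (Backelin's thesis). Any argument you supply therefore already goes beyond what the paper offers, and your closing suggestion --- give the ``if'' direction in full and cite the literature for the converse --- is essentially what the paper does, only with a different reference.

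On the mathematics: your ``if'' direction is correct as written. For the converse, the join-irreducible approach is the standard one and works, but two small points should be made explicit in a finished write-up. First, you invoke the structure theorem for \emph{finite} distributive lattices without verifying that $\mathcal{T}$ is finite; this holds because a distributive lattice generated by $n$ elements is a homomorphic image of the free distributive lattice on $n$ generators, which is finite. Second, you briefly write ``atoms'' where you mean ``join-irreducibles''; these coincide only in Boolean lattices, and it is the join-irreducibles that carry the argument. With those clarifications, choosing a basis of a complement of $P_{-}$ in each join-irreducible $P$ and taking the union is the expected construction, and the linear-independence check via modularity that you flag as the main work is indeed routine once set up carefully.
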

%%%%%%%
\begin{proof}
See \cite{Ba}, Lemma 1.2. 
\end{proof}

We may apply the previous result to the situation we are interested in: 
if $R$ has a basis of monomials, the sublattice of all vector subspaces of $V^{(n)}$ generated by $V^{(j)} \otimes R_{s} \otimes V^{(n-s-j)}$, for $j = 0, \dots, n - s$ and $s = a, b$, is distributive, 
as one can deduce by using the basis of $V^{(n)}$ composed of all monomials of (tensor) degree $n$.
This implies that all the tuples considered in Propositions \ref{prop:kerdelta2pure} and \ref{prop:kerdeltai} are distributive. 
In fact, using Lemma \ref{lem:crossedincl} and the comments of Remark \ref{rem:incimp}, we get that $\Ker(\delta_2)$ is $2$-pure in degrees $a+1$ and $b+1$ such that 
$k \otimes_A \Ker(\delta_2) \simeq \bar{J}_{a+1}^a\oplus \bar{J}_{b+1}^b$ if and only if the \textit{e.c.} and the first collection of inclusions of Remark \ref{rem:incimp} hold. 
Moreover, we remark that in this case Proposition \ref{prop:kerdeltai} can be rewritten as stating that, under the same assumptions, 
$\Ker(\delta_i)$ is $2$-pure in degrees $n_a(i+1)$ and $n_b(i+1)$ such that $k \otimes_A \Ker(\delta_i) \simeq \bar{J}_{n_a(i+1)}^a\oplus \bar{J}_{n_b(i+1)}^b$ if and only if 
the tuples $(E^{s},F^{s},G^{s}_{a},G^{s}_{b})$ of subspaces defined in that subsection are distributive for each $s \in \{ a, b \}$ and for all $n \in \NN_{0}$ (or just $n \geq n_{s}(i)$). 
This is due to the fact that the inclusion at the end of the proposition follows from inclusion \eqref{eq:tergen} and the mentioned distributivity. 
We have thus proved the following result: 

%%%%%%%
\begin{corollary}
\label{coro:monomial}
Let $A=T(V)/\cl{R}$ be an $\{a,b\}$-multi-homogeneous algebra ($2 \leq a<b$) such that $R_a \oplus R_b$ satisfies the minimality condition 
and it has a basis of monomials. 
Then, $A$ is multi-Koszul if and only if the \textit{e.c.} and the inclusions of Remark \ref{rem:incimp} are satisfied. 
\end{corollary}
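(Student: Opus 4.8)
The plan is to specialize Theorem \ref{thm:Kozulequiv} and to observe that, once $R$ has a monomial basis, almost all of the conditions occurring there are automatically fulfilled. First I would recall — using that theorem, or directly Propositions \ref{prop:kerdelta2pure} and \ref{prop:kerdeltai} together with the recursive construction of the minimal resolution — that $A$ is $\{a,b\}$-multi-Koszul exactly when the following hold simultaneously: the \textit{e.c.}; the $(2,2)$-bidistributivity of the tuple $(E',E'',F',G',F'',G'')$ of Proposition \ref{prop:kerdelta2pure}; the inclusions of Proposition \ref{prop:kerdeltai} (even and odd cases) for every $i\ge 3$; and the distributivity of each tuple $(E^{s},F^{s},G^{s}_{a},G^{s}_{b})$ for $s\in\{a,b\}$ and $i\ge 3$.

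The key observation is lattice-theoretic. Since $R=R_a\oplus R_b$ has a basis of monomials, for each $n$ the basis of $V^{(n)}$ formed by the monomials of tensor degree $n$ distributes, in the sense of Proposition \ref{prop:distbasis}, with respect to all the subspaces $V^{(j)}\otimes R_s\otimes V^{(n-s-j)}$ (for $0\le j\le n-s$ and $s\in\{a,b\}$); hence the sublattice of $\mathcal{L}(V^{(n)})$ generated by these subspaces is distributive. All the vector spaces occurring in Propositions \ref{prop:kerdelta2pure} and \ref{prop:kerdeltai} — in particular $I_n$, the $\bar{J}^s_n$, $E^{s}$, $F^{s}$, $G^{s}_{s'}$, $X^{s,m}_{s'}$ and $E'$, $E''$, $F'$, $F''$, $G'$, $G''$ — lie in this sublattice, so every distributivity identity among them holds for free.

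With this at hand I would dispatch three of the four families of conditions. The distributivity of the tuples $(E^{s},F^{s},G^{s}_{a},G^{s}_{b})$ for $i\ge 3$ is immediate. The displayed inclusions of Proposition \ref{prop:kerdeltai} for $i\ge 3$ then follow, as in the paragraph preceding the corollary: by \eqref{eq:tergen} (resp.\ \eqref{eq:tergeniodd} in the odd case) their left-hand sides reduce to $E^{a}\cap G^{a}_{b}$, which is controlled using that distributivity together with equations \eqref{eq:prigen}, \eqref{eq:seggen} and the $\delta_2$-level inclusions of Remark \ref{rem:incimp}. For the bidistributivity of $(E',E'',F',G',F'',G'')$ I would invoke Lemma \ref{lem:crossedincl}: one has $E'\cap E''=0$ by the minimality condition and Lemma \ref{lem:trivialspace}, and the three triples appearing in that lemma are distributive by the previous paragraph, so the bidistributivity becomes equivalent to the crossed inclusions of the lemma, which — as spelled out in Remark \ref{rem:incimp} — are precisely the inclusions displayed there. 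Consequently the whole list collapses to the \textit{e.c.} together with the inclusions of Remark \ref{rem:incimp}, which is the assertion.

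The only real difficulty here is bookkeeping rather than mathematics: one must check carefully that nothing is lost when ``bidistributive'' is replaced by ``distributive plus crossed inclusions'' in Lemma \ref{lem:crossedincl} once distributivity is granted, that these crossed inclusions genuinely coincide with those of Remark \ref{rem:incimp}, and that the hypotheses of Proposition \ref{prop:kerdeltai} for $i\ge 3$ are indeed subsumed via \eqref{eq:tergen} and \eqref{eq:tergeniodd}. Since all of this is contained in the discussion immediately preceding the statement, in writing the proof I would mostly point back to that discussion rather than repeat the computations.
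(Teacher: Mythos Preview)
Your proposal is correct and follows essentially the same route as the paper: the argument preceding the corollary uses Proposition~\ref{prop:distbasis} to obtain distributivity of the relevant sublattice from the monomial basis, then reduces the bidistributivity of Proposition~\ref{prop:kerdelta2pure} to the crossed inclusions via Lemma~\ref{lem:crossedincl} and Remark~\ref{rem:incimp}, and observes that the inclusions of Proposition~\ref{prop:kerdeltai} become automatic once distributivity and the $\delta_2$-level data are in place. Your write-up is in fact slightly more explicit than the paper's in tracking how the inclusions of Remark~\ref{rem:incimp} feed into the $i\geq 3$ step, but the logic is identical.
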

%%%%%%%

\begin{remark}
Note that if the algebra $A$ is monomial the \textit{e.c.} may be equivalently stated as an overlapping property on a basis of monomials of the space of relations, as in \cite{B2}, Prop. 3.8. 
\end{remark}
%%%%%%%

%%%%%%%%%%%%%%%%%%%%%%%%%%%%%%%%%%%%%%%%%%%%%%%%%%%%%%%%%%%%%%%%%%%%%%%%%%%%%%%%%%%%%%%%%%%%%%%%%%%%%%%%%%%%%%

\section*{\texorpdfstring{Acknowledgements}{sec:acknowledgements}}
\addcontentsline{toc}{section}{Acknowledgements}

The second author would like to thank Eduardo Marcos and Andrea Solotar for interesting comments and suggestions.

%%%%%%%%%%%%%%%%%%%%%%%%%%%%%%%%%%%%%%%%%%%%%%%%%%%%%%%%%%%%%%%%%%%%%%%%%%%%%%%%%%%%%%%%%%%%%%%%%%%%%%%%%%%%%%%%%%%%%%%%%%%%%%%%%%%%%%%%%%%%%%%%%%%%%

\bibliographystyle{model1-num-names}
\addcontentsline{toc}{section}{References}

%%%%%%%%%%%%%%%%%%%%%%%%%%%%%%%%%%%%%%%%%%%%%%%%%%%%%%%%%%%%%%%%%%%%%%%%%%%%%%%%%%%%%%%%%%%%%%%%%%%%%%%%%%%%%%%%%%%%%%%%%%%%%%%%%%%%%%%%%%%%%%%%%%%%%%%%%

\begin{bibdiv}
\begin{biblist}

\bib{Ba}{thesis}{
   author={Backelin, J{\"o}rgen},
   title={A distributiveness property of augmented algebras and some related homological results},
   type={Ph.D. Thesis},
   place={Stockholm},
   date={1982},
}

\bib{BF}{article}{
   author={Backelin, J{\"o}rgen},
   author={Fr{\"o}berg, Ralf},
   title={Koszul algebras, Veronese subrings and rings with linear
   resolutions},
   journal={Rev. Roumaine Math. Pures Appl.},
   volume={30},
   date={1985},
   number={2},
   pages={85--97},
}

\bib{BGS1}{article}{
   author={Be{\u\i}linson, A. A.},
   author={Ginsburg, V. A.},
   author={Schechtman, V. V.},
   title={Koszul duality},
   journal={J. Geom. Phys.},
   volume={5},
   date={1988},
   number={3},
   pages={317--350},
}

\bib{BGS2}{article}{
   author={Beilinson, Alexander},
   author={Ginzburg, Victor},
   author={Soergel, Wolfgang},
   title={Koszul duality patterns in representation theory},
   journal={J. Amer. Math. Soc.},
   volume={9},
   date={1996},
   number={2},
   pages={473--527},
}

\bib{B2}{article}{
   author={Berger, Roland},
   title={Koszulity for nonquadratic algebras},
   journal={J. Algebra},
   volume={239},
   date={2001},
   number={2},
   pages={705--734},
   note={See also \textit{Koszulity for nonquadratic algebras II}. Preprint available at \texttt{arXiv:math/0301172V1 [math.QA]}},
}

\bib{B4}{article}{ 
   author={Berger, Roland}, 
   title={La cat\'egorie des modules gradu\'es sur une alg\`ebre gradu\'ee (nouvelle version du chapitre 5 d'un cours de Master 2 \'a Lyon 1)},
   date={2008},
   url={http://webperso.univ-st-etienne.fr/$\sim$rberger/mes-textes.html},
}

\bib{BDW}{article}{
   author={Berger, Roland},
   author={Dubois-Violette, Michel},
   author={Wambst, Marc},
   title={Homogeneous algebras},
   journal={J. Algebra},
   volume={261},
   date={2003},
   number={1},
   pages={172--185},
}

\bib{BG}{article}{
   author={Berger, Roland},
   author={Ginzburg, Victor},
   title={Higher symplectic reflection algebras and non-homogeneous
   $N$-Koszul property},
   journal={J. Algebra},
   volume={304},
   date={2006},
   number={1},
   pages={577--601},
}

\bib{BM}{article}{
   author={Berger, Roland},
   author={Marconnet, Nicolas},
   title={Koszul and Gorenstein properties for homogeneous algebras},
   journal={Algebr. Represent. Theory},
   volume={9},
   date={2006},
   number={1},
   pages={67--97},
}

\bib{BBK}{article}{
   author={Brenner, Sheila},
   author={Butler, Michael C. R.},
   author={King, Alastair D.},
   title={Periodic algebras which are almost Koszul},
   journal={Algebr. Represent. Theory},
   volume={5},
   date={2002},
   number={4},
   pages={331--367},
}

\bib{C}{book}{
   title={S\'eminaire Henri Cartan, 11e ann\'e: 1958/59. Invariant de Hopf
   et op\'erations cohomologiques secondaires},
   language={French},
   series={2e \'ed. 2 vols. \'Ecole Normale Sup\'erieure},
   publisher={Secr\'etariat math\'ematique},
   place={Paris},
   date={1959},
   pages={Vol. 1 (exp. 1--9), ii+121 pp. Vol. 2 (exp. 10--19), ii+158 pp.
   (mimeographed)},
}

\bib{CG}{article}{
   author={Conner, Andrew},
   author={Goetz, Pete},
   title={$A_\infty$-algebra structures associated to $\scr K_2$
   algebras},
   journal={J. Algebra},
   volume={337},
   date={2011},
   pages={63--81},
}

\bib{CS}{article}{
   author={Cassidy, Thomas},
   author={Shelton, Brad},
   title={Generalizing the notion of Koszul algebra},
   journal={Math. Z.},
   volume={260},
   date={2008},
   number={1},
   pages={93--114},
}

\bib{F}{article}{
   author={Fr{\"o}berg, R.},
   title={Koszul algebras},
   conference={
      title={Advances in commutative ring theory},
      address={Fez},
      date={1997},
   },
   book={
      series={Lecture Notes in Pure and Appl. Math.},
      volume={205},
      publisher={Dekker},
      place={New York},
   },
   date={1999},
   pages={337--350},
}

\bib{Go}{article}{
   author={Govorov, V. E.},
   title={Dimension and multiplicity of graded algebras},
   language={Russian},
   journal={Sibirsk. Mat. \v Z.},
   volume={14},
   date={1973},
   pages={1200--1206, 1365},
}

\bib{GM}{article}{
   author={Green, Edward L.},
   author={Marcos, E. N.},
   title={$d$-Koszul algebras, 2-$d$-determined algebras and 2-$d$-Koszul
   algebras},
   journal={J. Pure Appl. Algebra},
   volume={215},
   date={2011},
   number={4},
   pages={439--449},
}

\bib{GMMZ}{article}{
   author={Green, E.},
   author={Marcos, E},
   author={Mart\'{i}nez-Villa, R.},
   author={Zhang, P.},
   title={$D$-Koszul algebras},
   journal={J. Pure and App. Algebra},
   volume={193},
   date={2004},
   pages={141--162},
}

\bib{HL}{article}{
   author={Hai, Ph{\`u}ng H{\^o}},
   author={Lorenz, Martin},
   title={Koszul algebras and the quantum MacMahon master theorem},
   journal={Bull. Lond. Math. Soc.},
   volume={39},
   date={2007},
   number={4},
   pages={667--676},
}

\bib{HeL}{article}{
   author={He, Ji-Wei},
   author={Lu, Di-Ming},
   title={Higher Koszul algebras and $A$-infinity algebras},
   journal={J. Algebra},
   volume={293},
   date={2005},
   number={2},
   pages={335--362},
}

\bib{K}{article}{
   author={Koszul, Jean-Louis},
   title={Homologie et cohomologie des alg\`ebres de Lie},
   language={French},
   journal={Bull. Soc. Math. France},
   volume={78},
   date={1950},
   pages={65--127},
}

\bib{LH}{thesis}{
   author={Lef\`evre-Hasegawa, Kenji},
   title={sur les $A_{\infty}$-cat\'egories},
   language={French},
   type={Ph.D. Thesis},
   place={Paris},
   date={2003},
   note={Corrections at \texttt{http://www.math.jussieu.fr/~keller/lefevre/TheseFinale/corrainf.pdf}},
}

\bib{M}{article}{
   author={Manin, Yu. I.},
   title={Some remarks on Koszul algebras and quantum groups},
   language={English, with French summary},
   journal={Ann. Inst. Fourier (Grenoble)},
   volume={37},
   date={1987},
   number={4},
   pages={191--205},
}

\bib{Mer}{article}{
   author={Merkulov, S. A.},
   title={Strong homotopy algebras of a K\"ahler manifold},
   journal={Internat. Math. Res. Notices},
   date={1999},
   number={3},
   pages={153--164},
}

\bib{NV}{book}{
   author={N{\u{a}}st{\u{a}}sescu, Constantin},
   author={Van Oystaeyen, Freddy},
   title={Methods of graded rings},
   series={Lecture Notes in Mathematics},
   volume={1836},
   publisher={Springer-Verlag},
   place={Berlin},
   date={2004},
   pages={xiv+304},
}

\bib{O}{article}{
   author={Ore, Oystein},
   title={On the foundation of abstract algebra. I},
   journal={Ann. of Math. (2)},
   volume={36},
   date={1935},
   number={2},
   pages={406--437},
}

\bib{P}{article}{
   author={Priddy, Stewart B.},
   title={Koszul resolutions},
   journal={Trans. Amer. Math. Soc.},
   volume={152},
   date={1970},
   pages={39--60},
}

\bib{W}{book}{
   author={Weibel, Charles A.},
   title={An introduction to homological algebra},
   series={Cambridge Studies in Advanced Mathematics},
   volume={38},
   publisher={Cambridge University Press},
   place={Cambridge},
   date={1994},
   pages={xiv+450},
}

\end{biblist}
\end{bibdiv}
%%%%%%%%%%%

\end{document}